\newtheorem{theorem}{Theorem}[section]
\newtheorem{lemma}[theorem]{Lemma}
\newtheorem{definition}[theorem]{Definition}
\newtheorem{claim}[theorem]{Claim}
\newtheorem{fact}[theorem]{Fact}
\newtheorem*{claim*}{Claim}
\newcommand{\poly}{\ensuremath{\mathsf{poly}}}
\newcommand{\E}{\mathbb{E}}
\newcommand{\R}{\mathbb{R}}
\newcommand{\Tr}{\mathsf{Tr}}
\newcommand{\diag}{\mathsf{diag}}
\global\long\def\norm#1{\left\Vert #1\right\Vert }
\newcommand{\disc}{\mathsf{disc}}
\newcommand{\p}{\mathbb{P}}
\newcommand{\ignore}[1]{{}}
\newcommand{\trunc}{\mathsf{trun}}
\title{Decoupling via Affine Spectral-Independence: \\Beck-Fiala and Koml\'os Bounds Beyond Banaszczyk}
\author{Nikhil Bansal\thanks{University of Michigan, Ann Arbor, MI, USA. \texttt{bansaln@umich.edu }.} 
 \and 
Haotian Jiang\thanks{University of Chicago, Chicago, IL, USA. \texttt{jhtdavid@uchicago.edu}.}}
\date{}
\begin{document}

\allowdisplaybreaks
\begin{titlepage}
\maketitle

\begin{abstract}
The Beck-Fiala Conjecture \cite{BF81} asserts that any set system of $n$ elements with degree $k$ has combinatorial discrepancy $O(\sqrt{k})$. A  substantial generalization is the Koml\'os Conjecture, which states that any $m \times n$ matrix with columns of unit $\ell_2$ length has discrepancy  $O(1)$. 

\smallskip

In this work, we resolve the Beck-Fiala Conjecture for $k \geq \log^2 n$. We also give an $\widetilde{O}(\sqrt{k} + \sqrt{\log n})$ bound for $k \leq \log^2 n$, where $\widetilde{O}(\cdot)$ hides $\mathsf{poly}(\log \log n)$ factors. These bounds improve upon the $O(\sqrt{k \log n})$ bound due to Banaszczyk \cite{Ban98}.

\smallskip

For the Koml\'os problem, we give an $\widetilde{O}(\log^{1/4} n)$ bound, improving upon the previous $O(\sqrt{\log n})$ bound \cite{Ban98}. All of our results also admit efficient polynomial-time algorithms.

\smallskip

To obtain these results, we exploit a new technique of ``decoupling via affine spectral-independence'' in designing rounding algorithms.
In particular, our algorithms obtain the desired colorings via a discrete Brownian motion, guided by a semidefinite program (SDP). Besides standard constraints used in prior works, we add some extra {\em affine spectral-independence} constraints, which effectively {\em decouple} the evolution of discrepancies across different rows, and allow us to better control how many rows accumulate large discrepancies at any point during the process. This new technique is quite general and may be of independent interest.

\end{abstract}

 \thispagestyle{empty}
\end{titlepage}

\thispagestyle{empty}
{\hypersetup{linkcolor=BrickRed}
\tableofcontents
}

\newpage
\setcounter{page}{1}

\section{Introduction}

Combinatorial discrepancy theory studies the following question: given a universe of elements $U=\{1,\ldots, n\}$ and a collection $\mathcal{S} = \{S_1, \ldots, S_m\}$ of subsets of $U$, how well can we partition $U$ into two pieces, so that all sets in $\mathcal{S}$ are split as evenly as possible.
Formally, the combinatorial discrepancy of the set system $\mathcal{S}$ is defined as 
\begin{equation*}
    \disc(\mathcal{S}) := \min_{x: U \rightarrow \{-1,1\}} \max_{i \in [m]} \big|\sum_{j \in S_i}  x(j) \big| ,
\end{equation*}
where the partition $x: U \rightarrow \{-1,1\}$ is also called a {\em coloring}. 
Denoting by $A \in \{0,1\}^{m\times n}$ the incidence matrix of $\mathcal{S}$, i.e., $A_{ij} = 1$ if $j \in S_i$ and $0$ otherwise, we can write $\disc(\mathcal{S}) =  \disc(A) := \min_{x\in\{-1,1\}^n}\norm{{A}x}_{\infty}$. This latter definition also extends to general matrices $A \in \mathbb{R}^{m \times n}$ which may not correspond to any set systems.  Discrepancy is a classical and well-studied topic with many connections and applications to both mathematics and computer science \cite{Cha00,Mat09,CST14}.

\smallskip
\noindent \textbf{The Beck-Fiala and Koml\'os Conjectures.} 
A central and long-standing problem in discrepancy theory has been to understand the discrepancy of bounded-degree set systems --- where each element appears in at most $k$ sets, or equivalently, the matrix $A$ is $k$-column sparse. 
In their seminal work \cite{BF81}, Beck and Fiala showed that $\disc(A) \leq 2k-1$ for any such matrix $A$, and conjectured that $\disc(A) = O(k^{1/2})$. This latter bound is the best possible in general.\footnote{Using probabilistic methods, it is easy to prove an $\Omega(k^{1/2})$ lower bound for the Beck-Fiala problem.}

The Koml\'{o}s problem is a substantial generalization where each column of $A$ has Euclidean length at most $1$, i.e.,~$\sum_{i=1}^m A_{ij}^2 \leq 1$ for all columns $j\in [n]$, and the Koml\'os Conjecture states that $\disc(A)=O(1)$ for any such matrix $A$. 
Notice that the Beck-Fiala problem can be viewed as an instance of the Koml\'os problem where $A_{ij} = k^{-1/2}$ if $j \in S_i$ and $0$ otherwise, and that the Koml\'os Conjecture implies the Beck-Fiala Conjecture.  

\smallskip
\noindent {\bf Previous Results.} The study of these conjectures has led to various powerful and general techniques in discrepancy.
Beck developed the partial coloring method \cite{Bec81}, based on the pigeonhole principle and counting, which was later refined by Spencer \cite{Spe85} and Gluskin \cite{Glu89}. This method shows the existence of a good {\em partial coloring}, where a constant fraction of  elements are colored $\pm 1$, with discrepancy $O(k^{1/2})$ and $O(1)$ for the Beck-Fiala and Koml\'os problems respectively.
Iterating this $O(\log n)$ times gives a full coloring, but loses an additional $\log n$ factor, resulting in $O(k^{1/2} \log n)$ and $O(\log n)$ discrepancy respectively. This method has been studied extensively, but it seems unclear how to use it to get better bounds for these problems.

Banaszczyk developed a different approach  \cite{Ban98} that directly produces a coloring with discrepancy $O(k^{1/2} \log^{1/2} n)$ and $O(\log^{1/2} n)$ for the Beck-Fiala and Koml\'os problems respectively. This remains the best known bound for the Koml\'os problem to date. Banaszczyk's approach uses deep ideas from convex geometry, and gives a beautiful connection between discrepancy and the Gaussian measure of an associated convex body. 
As shown in \cite{DGLN16}, Banaszczyk's bound is equivalent to the existence of a distribution over colorings $x \in \{\pm 1\}^n$  such that the resulting discrepancy vector $Ax$ is $O(k)$- and $O(1)$-subgaussian respectively.\footnote{A mean-zero random vector $y \in \mathbb{R}^m$ is $\sigma^2$-subgaussian, if for all $\theta \in \R^m$, 
it satisfies $\E[\exp(\langle \theta,y\rangle)] \leq \exp(\sigma^2 \|\theta\|_2^2)$.}
The extra $\sqrt{\log n}$ factor loss results from a union bound over all the rows, and seems inherent for techniques that are only based on subgaussianity.

\smallskip
\noindent \textbf{Constructive Methods.} Both these methods were originally non-constructive, and did not give an efficient algorithm to actually find a good coloring. 
However, various efficient algorithms have been developed recently for
both the partial coloring method 
\cite{Ban10,BS13,LM15,HSS14,Rot17,ES18,JSS23} and for Banaszczyk's approach \cite{DGLN16,BDG19,BDGL18,LRR17, BLV22, ALS21, PV23, HSSZ24}, 
leading to various surprising applications in areas such as differential privacy \cite{NTZ13,MN15}, 
combinatorics \cite{BG17,Nik17},
spectral sparsification of graphs \cite{RR20,JRT24,LWZ25}, 
approximation algorithms and rounding \cite{Rot16,BRS22,Ban24}, 
kernel density estimation \cite{PT20,CKW24}, 
randomized controlled trials \cite{HSSZ24}, 
and numerical integration \cite{BJ25a}. 
As this literature is already extensive, 
we refer the reader to the survey \cite{Ban22} for more on these algorithmic aspects.

Very recently, Bansal and Jiang \cite{BJ25b} built on top of these algorithmic developments, and gave a constructive $O(k^{1/2} (\log \log n)^{1/2})$ bound for the Beck-Fiala problem when $k = \Omega(\log^5n)$, almost settling the Beck-Fiala Conjecture (up to a $(\log \log n)^{1/2}$ factor) for such $k$. For smaller values of $k$, however, their approach gave no improvement over previous results, and consequently implied nothing better than Banaszczyk's $O(\log^{1/2} n)$ bound for the Koml\'os problem.

\smallskip
\noindent \textbf{Dimension-Independent Bounds.} All the aforementioned bounds incur a dependence on the dimension $n$. If no dependence on dimension is permitted, the best-known bound for the Beck-Fiala problem is $2k - \log^*k$ due to Bukh \cite{Buk16}, which slightly improves upon the $2k-1$ bound of Beck and Fiala \cite{BF81} and subsequent refinements in \cite{BH97,Hel99}. 
A related result is the celebrated six-standard-deviations result of Spencer \cite{Spe85}, obtained independently by Gluskin \cite{Glu89}, which states that any set system with $m=O(n)$ sets has discrepancy $O(\sqrt{n})$. This can be viewed as a special case of the Beck-Fiala problem, where $k=m=O(n)$.\footnote{This was the only regime where the Beck-Fiala conjecture was previously known to be true.} 
For the general Beck-Fiala problem, however, not even a $(2 - \Omega(1))k$ bound is known to date. Also, no dimension-independent bound is known for the Koml\'os problem, as it would imply the Koml\'os Conjecture.

\smallskip
{\bf Lower Bounds.} It is also possible that the Koml\'os Conjecture may be false. In fact, similar conjectures about $O(1)$ discrepancy for the 3-permutation problem and the Erd\H{o}s discrepancy problem were disproved recently in remarkable results \cite{NNN12,Tao16}. 
For the Koml\'os problem, Hajela conjectured an $\Omega(\log^{1/2} n)$ lower bound \cite{Haj88}, and showed that high-discrepancy instances exist with respect to any subexponential number of sign vectors. This was improved later in \cite{CS21}. 
However, despite significant effort, the best known lower bound for the Koml\'os problem is only $1+\sqrt{2} \approx 2.414$ due to Kunisky \cite{Kun23}. 

There are natural barriers towards proving better lower bounds. Techniques based on SDP duality are ruled out, as the {\em vector discrepancy} for every Koml\'os instance is at most $1$ \cite{Nik13}. Constructions of hard instances based on probabilistic methods also fail, as (semi-)random or even smoothed instances admit small discrepancy \cite{EL19,HR19,FS20,BM20,Pot20,TMR20,BJM+22,ANW22}.

\subsection{Our Contributions}

In this work, we give a general algorithmic approach for obtaining low-discrepancy colorings based on a new technique of {\em decoupling via affine spectral-independence} in designing random walks, 
and use it to obtain several improved bounds for both the Beck-Fiala and Koml\'os problems. 

The framework is quite modular, and for clearer exposition, we present our results in several parts, each highlighting a new idea (that will be explained in \Cref{sec:prelim}).

\medskip
\noindent \textbf{The Basic Method: An Improved Koml\'os Bound Beyond Banaszczyk.}
A basic instantiation of our framework already gives the following improved bound for the Beck-Fiala problem.

\begin{restatable}[Basic Beck-Fiala bound (simple version)]{theorem}{BeckFialaWeak}\label{thm:bf_weak-informal}
Let $A \in \{0,1\}^{m \times n}$ be a set system where each element lies in at most $k$ sets. Then $\disc(A) = \widetilde{O}(k^{1/2} \log^{1/4} n)$, for all values of $k$.
\end{restatable}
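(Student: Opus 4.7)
The plan is to construct the coloring via a discrete Brownian walk: start at $x_0 = 0 \in [-1,1]^n$, take small Gaussian steps $x_{t+\delta} - x_t \sim \mathcal{N}(0, \delta \Sigma_t)$, and freeze coordinates once they reach $\pm 1$, continuing until every coordinate is frozen and we obtain $x_T \in \{-1,1\}^n$. This mirrors the random-walk rounding framework used by \cite{BDG19,BDGL18,BLV22,BJ25b} and others, and the only freedom lies in how we choose the step covariance $\Sigma_t$ at each time $t$.

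At each step, $\Sigma_t$ is produced by solving an SDP supported on the currently live coordinates. Besides the usual constraints --- positive semidefiniteness, a trace bound ensuring the walk hits $\{-1,1\}^n$ in $\widetilde{O}(1)$ time, and a per-row variance bound of the form $A_i \Sigma_t A_i^\top \le O(1)$ that translates into $\widetilde{O}(k)$-subgaussian row discrepancies --- we impose the new \emph{affine spectral-independence} constraints: roughly, bounds on $\|A_S \Sigma_t A_S^\top\|_{\op}$ for subsets $S$ of rows that are currently carrying large discrepancy. Intuitively, these constraints forbid the instantaneous variance from concentrating on any small collection of already-dangerous rows, and thereby decouple the evolution of the individual row discrepancies.

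To convert this decoupling into a quantitative gain, I would track a level-set profile of the discrepancy vector $Ax_t$: for each threshold $\tau$, bound the number of rows whose absolute discrepancy exceeds $\tau$ at time $t$. The affine spectral-independence constraint prevents too many rows from being simultaneously pushed past any given level, so the effective dimension entering the tail-and-union-bound calculation is much smaller than the naive $n$. Shaving a square root off Banaszczyk's $\sqrt{\log n}$ factor via this two-level counting (bound how many rows reach a moderate level, then union-bound only over those to control the maximum) produces the target $\log^{1/4} n$ dependence, with the $\widetilde{O}(\cdot)$ absorbing $\poly(\log\log n)$ slack.

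The main obstacle will be feasibility of the SDP. When many rows become dangerous, the new spectral-independence constraints could in principle rule out every admissible $\Sigma_t$ and stall the walk. I would address this with a potential / volume argument tailored to $k$-column-sparse matrices: exhibit an explicit feasible covariance obtained by projecting the identity onto the orthogonal complement of the dangerous-row directions and rescaling, and use column-sparsity of $A$ to show that this projection retains enough trace to keep the walk moving while automatically satisfying the new operator-norm constraint. Making the drift/freezing analysis compatible with the additional constraints --- in particular, arguing that the set of dangerous rows evolves slowly enough that the SDP need not be re-solved under adversarial schedules --- is the core technical step from which the $k^{1/2}\log^{1/4} n$ bound will fall out.
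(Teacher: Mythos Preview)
Your proposal captures the right high-level framework (SDP-guided Brownian walk with extra constraints), but it has a genuine gap in the central mechanism.

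First, your formulation of the affine spectral-independence constraint is off. The paper does \emph{not} impose operator-norm bounds on $A_S \Sigma_t A_S^\top$ for dangerous subsets $S$. Rather, it takes the matrix $E_t$ of \emph{all} medium rows (size between $\mu$ and $10k$, where $\mu = \max(b, b^2/\log n)$) and requires $E_t U_t E_t^\top \preceq (r/\eta)\,\diag(E_t U_t E_t^\top)$; this is a comparison to the diagonal, not an absolute operator-norm bound, and it is what yields near-pairwise-independent evolution of the (regularized) row discrepancies.

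Second, and more importantly, your plan is missing the \emph{per-column} argument that actually produces the $\log^{1/4} n$ saving. The paper does not do a global level-set or two-tier union bound. Instead it fixes a single column $j$, looks at the $\le k$ rows through it, and uses the affine spectral-independence to show that the number of those $k$ rows whose regularized discrepancy ever exceeds the target $b$ is at most $\mu/10$ with high probability (this is the decoupling bound, Theorem~3.6, applied with $m=k$). Because every medium (non-tiny) dangerous row has at least $\mu$ live entries, the per-column cap of $\mu/10$ dangerous rows immediately gives at most $n_t/10$ dangerous rows globally at every time $t$, which is exactly what is needed to block them all and keep the SDP feasible. The quantitative balance $b^2 \mu \gtrsim k^2 \log n$ coming out of this per-column calculation is what forces $b = \widetilde{O}(k^{1/2}\log^{1/4} n)$.

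Your proposed feasibility argument---project onto the orthogonal complement of the dangerous rows and hope column-sparsity leaves enough trace---is circular: feasibility requires knowing there are $\le n_t/10$ dangerous rows, and that is precisely what the per-column decoupling bound is for. Without the per-column structure (and the accompanying regularized discrepancy with its negative-drift energy term $\beta G_t(i)$), there is no way to rule out $\gg n_t$ dangerous rows appearing and stalling the walk.
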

This improves Banaszczyk's $O(k^{1/2} \log^{1/2} n)$ bound 
by an $\widetilde{\Omega}(\log^{1/4} n)$ factor for {\em every} $k$. 

Viewing a Koml\'os instance as multiple Beck-Fiala instances at various scales of $k$ (and carefully considering only $O(\log \log n)$ scales, instead of the $O(\log n)$ scales in a naive decomposition), this allows us to obtain  the following improved bound for the Koml\'os problem.

\begin{restatable}[Improved Koml\'{o}s bound]{theorem}{Komlos}\label{thm:Komlos}
Let $A \in \mathbb{R}^{m \times n}$ be a matrix where each column has $\ell_2$ norm at most $1$, then $\disc(A) = \widetilde{O}(\log^{1/4} n)$. 
\end{restatable}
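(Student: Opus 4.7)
The plan is to reduce the Koml\'os problem to a small collection of Beck-Fiala instances at different entry-magnitude scales and invoke \Cref{thm:bf_weak-informal} on each. First, I decompose $A = A^{(\mathrm{tail})} + \sum_t A^{(t)}$, where $A^{(t)}$ collects the entries with $|A_{ij}| \in (2^{-t-1}, 2^{-t}]$ and $A^{(\mathrm{tail})}$ collects the entries of magnitude smaller than $1/n$. The tail satisfies $\|A^{(\mathrm{tail})} x\|_\infty \leq 1$ for any $x \in \{\pm 1\}^n$ and is negligible. By the Koml\'os column constraint $\sum_i A_{ij}^2 \leq 1$, each column of $A^{(t)}$ has at most $k_t := 4^{t+1}$ nonzeros, so $A^{(t)}$ is essentially a $k_t$-sparse Beck-Fiala instance scaled by $2^{-t}$. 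An extension of \Cref{thm:bf_weak-informal} to matrices with entries in $[-1,1]$ --- which I expect to follow from the same SDP/Brownian-motion analysis --- gives
\[
\disc(A^{(t)}) \;\leq\; 2^{-t} \cdot \widetilde{O}\bigl(k_t^{1/2} \log^{1/4} n\bigr) \;=\; \widetilde{O}(\log^{1/4} n).
\]
Crucially, this bound is independent of $t$; a single coloring $x$ achieving it at every scale simultaneously should be produced by running the Brownian-motion algorithm from \Cref{thm:bf_weak-informal} once with the union of all per-scale spectral-independence constraints.

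The naive decomposition uses $\Theta(\log n)$ scales, and the triangle inequality would yield only $\widetilde{O}(\log^{5/4} n)$, worse than Banaszczyk. The central new idea has to be reducing to $O(\log\log n)$ ``effective'' scales. My proposal is to group consecutive magnitude scales into super-scales of geometric ratio $\poly(\log n)$, giving $O(\log n/\log\log n)$ super-scales, and then use the global Koml\'os budget $\sum_{r,i,j} (A^{(r)}_{ij})^2 \leq n$ to argue that at most $O(\log\log n)$ super-scales carry non-negligible $\ell_2$-mass; the remaining ``light'' super-scales should be absorbable into a residual of size $\widetilde{O}(\log^{1/4} n)$ via a per-row subgaussian variance bound. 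Summing over the $O(\log\log n)$ heavy super-scales then gives $\|Ax\|_\infty \leq \widetilde{O}(\log^{1/4} n)$.

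The main obstacle is showing that the per-super-scale discrepancy remains $\widetilde{O}(\log^{1/4} n)$ even though entries within a super-scale span a $\poly(\log n)$ range of magnitudes --- the separate Beck-Fiala estimate applied to a super-scale would inflate by a $\poly(\log n)$ factor (since the worst-case entry sets the effective $k$ relative to the smallest entry). I expect this is precisely where the \emph{affine spectral-independence decoupling} of the paper plays its essential role: running the algorithm jointly across all scales should let the decoupling control each super-scale's discrepancy at the magnitude of its \emph{smallest} entries rather than its largest. A secondary, likely routine, obstacle is the extension of \Cref{thm:bf_weak-informal} from $\{0,1\}$-valued to bounded real-valued matrices, which I expect to fall out of the same SDP/Brownian-motion framework without much change.
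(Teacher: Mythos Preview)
Your high-level plan---decompose into magnitude scales, apply the Beck--Fiala bound at each scale, and reduce the number of scales to $O(\log\log n)$---is exactly the paper's strategy, but your proposed mechanism for the scale reduction has a genuine gap. The global mass argument (``at most $O(\log\log n)$ super-scales carry non-negligible $\ell_2$-mass'') does not work: the bound $\sum_{r,i,j}(A^{(r)}_{ij})^2 \leq n$ is a global budget, but discrepancy is a row-wise $\ell_\infty$ quantity, so a super-scale with arbitrarily small total mass can still contribute $\Omega(1)$ discrepancy to some row. Nothing in the Koml\'os constraint prevents every super-scale from being ``active'' in this sense. Likewise, your super-scale idea creates precisely the wide-magnitude-range problem you flag as the main obstacle, and the hope that decoupling controls a super-scale at the magnitude of its \emph{smallest} entries is not how the decoupling bound works---the affine spectral-independence parameter $r_s/\eta_s$ scales with the number of rows (hence with the sparsity set by the smallest entries), which pushes the bound the wrong way.

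The paper's mechanism is simpler and sidesteps both issues. It keeps plain factor-$2$ scales throughout (no grouping), but only for \emph{heavy} entries with $|A_{ij}| > 1/\log^5 n$; this already gives just $O(\log\log n)$ heavy scales, each a genuine Beck--Fiala instance. All \emph{light} entries ($\le 1/\log^5 n$) are collected into a single extra scale, which is not a Beck--Fiala instance (its column sparsity can be huge) but has column $\ell_2^2$-norm at most $k_P = \Theta(\log^{10} n)$ after rescaling. This light scale is where the real work is and requires opening the black box: one further subdivides it into $O(\log n)$ subscales and applies the decoupling bound (\Cref{thm:general_freedman}) to each, but now the sum $\sum_q 2^{-2q}\cdot(\text{additive error})$ over subscales is geometric and converges, and the threshold $1/\log^5 n$ is chosen exactly so that $k_P$ is large enough to absorb the $O(\log^2 n)$ additive terms. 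All scales are handled by a single SDP run in parallel, which costs only a $P^{1/2}=(\log\log n)^{O(1)}$ factor from splitting the $\eta$ parameter.
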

This improves upon the previous best bound of $O(\log^{1/2} n)$  due to Banaszczyk \cite{Ban98}, and refutes Hajela's conjectured lower bound of $\Omega(\log^{1/2} n)$ \cite{Haj88}.

We next describe two extensions of this basic method, and substantially improve the bounds above for the Beck-Fiala problem whenever $k = \widetilde{\Omega}(\log^{1/2} n)$.\footnote{Unfortunately, these extensions do not improve the bound in Theorem \ref{thm:Komlos} for the Koml\'os problem, as they do not improve upon the $O(k)$ Beck-Fiala bound when $k \leq \log^{1/2} n$. We discuss this more in \Cref{sec:conclusion}.}

\medskip
\noindent \textbf{A Multi-Layered Method: Resolving Beck-Fiala Conjecture for $k \geq \log^2 n$.}
Our first extension is a certain {\em multi-layered} version of the method above. This allows us to resolve the Beck-Fiala Conjecture for all $k\geq \log^2 n$ (without any hidden $\log \log n$ factors).

\begin{restatable}[Resolving Beck-Fiala Conjecture for $k \geq \log^2 n$]{theorem}{BeckFialaLargeK}\label{thm:bf_conj_large_k}
Let $A \in \{0,1\}^{m \times n}$ be a set system where each
element lies in at most $k$ sets. If $k = \Omega(\log^2 n)$, then $\disc(A) = O(k^{1/2})$.  
\end{restatable}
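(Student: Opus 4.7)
The plan is to extend the basic algorithm of \Cref{thm:bf_weak-informal} into a multi-layered variant, shaving the residual $\widetilde{\Omega}(\log^{1/4} n)$ factor whenever $k \geq \log^2 n$. The intuition is that a single layer of affine spectral-independence reduces the ``effective'' number of rows whose discrepancy can be near a given threshold from $m$ down to a smaller count, saving a $\log^{1/2} n$ factor inside Banaszczyk's subgaussian union bound and hence $\log^{1/4} n$ in the final discrepancy. Iterating this decoupling $L = \Theta(\log \log n)$ times should drive the effective row count down to $O(1)$, removing all logarithmic dependence up to an additive $O(\log n)$ overhead, which is absorbed by $k^{1/2}$ exactly once $k \geq \log^2 n$.

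Concretely, I would introduce geometric thresholds $T_j = C k^{1/2} \cdot 2^{-j}$ for $j = 0, 1, \ldots, L$ and, at each step $t$ of the guided Brownian walk, maintain the sets $R_j(t) = \{i : |\langle a_i, x_t\rangle| \geq T_j\}$. Beyond the Banaszczyk-style subgaussian constraint, the SDP guiding the walk would carry a separate affine spectral-independence constraint for every layer $j$, restricted to rows in $R_j(t)$. The constraint at layer $j$ caps the growth of the quadratic form $\sum_{i \in R_j(t)} \langle a_i, \cdot \rangle^2$, decoupling the evolution of discrepancies within that layer and preventing rows at scale $T_j$ from drifting collectively upward.

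The analysis would proceed in three parts. First, inductively establish that $|R_j(t)| \lesssim |R_{j-1}(t)|^{1/2}$ holds throughout the walk, via a dual certificate generalizing the one from the basic method; this simultaneously proves feasibility of the multi-layered SDP. Second, use the layer-by-layer decoupling to obtain, for each $j$, a subgaussian tail bound on the rows in $R_j(t)$ whose variance proxy scales with $|R_j(t)|$ rather than $m$. Third, take a union bound across layers and across rows within each layer; the per-layer cost is an additive $\polylog n$ that telescopes to $O(\log n)$ across all $L$ layers, which is dominated by $k^{1/2}$ whenever $k \geq \log^2 n$, yielding the clean $O(k^{1/2})$ bound without any hidden $\log \log n$ factors.

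The main obstacle will be proving feasibility of the multi-layered SDP uniformly along the walk. Because the sets $R_j(t)$ evolve stochastically and rows can enter and leave $R_j$ many times, the dual certificate cannot be fixed a priori and must be robust to moving active sets. The crucial lemma will be a geometric-shrinking estimate $|R_j(t)| \ll |R_{j-1}(t)|$ that holds uniformly in $t$, which I would prove via a potential-function argument combining an anti-concentration bound — showing that too many rows near scale $T_j$ would force a violation of a coarser layer's constraint — with a careful choice of scale separation between consecutive $T_j$. Once this shrinkage is established and SDP feasibility is secured, the rest of the proof is essentially a layered recomputation of the Banaszczyk-style tail bounds.
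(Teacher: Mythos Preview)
Your high-level intuition—a multi-layered algorithm with $L=O(\log\log n)$ levels and one affine spectral-independence constraint per level—matches the paper, but the concrete plan has a fundamental gap. The paper does \emph{not} layer by current discrepancy magnitude, nor does it track global counts $|R_j(t)|$. Instead, each row carries a monotone level $\ell$: it is promoted from $\ell$ to $\ell+1$ the first time its regularized discrepancy at level $\ell$ exceeds $2b_\ell$ (with $b_\ell = C\cdot 5^{-\ell}\sqrt{k}$), and it never goes back. The central invariant is \emph{per column}: at most $k_\ell = k/100^\ell$ rows in any given column ever reach level $\ell$. This per-column bound is exactly what (i) controls the affine spectral-independence parameter $r_\ell \le k_\ell/\mu$ so the SDP stays feasible (\Cref{thm:sdp_feasibility}), and (ii) ensures that the number of non-tiny level-$L$ dangerous rows is at most $n_t/10$ at every time $t$, so they can all be blocked. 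A global bound on $|R_j(t)|$ delivers neither: as $n_t$ shrinks you cannot control the ratio $|R_j(t)|/n_t$ without knowing how the high-discrepancy rows are distributed across the surviving columns—precisely the obstruction discussed in \Cref{subsubsec:improve_Bana}.

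The per-column shrinkage $k_{\ell+1}=k_\ell/100$ is obtained by applying the decoupling bound (\Cref{thm:general_freedman}) once per column and per level with $\lambda b_\ell = O(1)$—a constant, not $\log\log n$—so that only a $1/100$ fraction of level-$\ell$ rows in each column get promoted; summing the geometrically decreasing $b_\ell$ then gives the clean $O(\sqrt{k})$ with no $\log\log n$ factors. Your plan is also missing the regularized discrepancy (with its energy term) that supplies the negative drift $\theta$ required by the Freedman-type inequality underlying \Cref{thm:general_freedman}. And because your sets $R_j(t)$ are not monotone in $t$, the ``main obstacle'' you flag—rows entering and leaving layers—is indeed fatal without a redesign along the paper's lines. (As an aside, with $T_j = Ck^{1/2}\cdot 2^{-j}$ decreasing you get $R_0\subseteq R_1\subseteq\cdots$, so $|R_j|\lesssim|R_{j-1}|^{1/2}$ is impossible; presumably you intended increasing thresholds.)
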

Previously, the conjectured $O(k^{1/2})$ bound was only known for $k=\Omega(n)$ \cite{Spe85,Glu89}.\footnote{
While the bound in \cite{BJ25b} comes close, it loses an additional $O((\log\log n)^{1/2})$ factor. It also required that $k \geq \log^5 n$, while Theorem \ref{thm:bf_conj_large_k} holds for $k = \Omega(\log^2 n)$.}

We remark that an $O(k^{1/2})$ bound was not known even if  both the rows and the columns of $A$ are $k$-sparse. 
Here, Lov\'asz local lemma directly gives $\disc(A) = O((k \log k)^{1/2})$, but despite much interest \cite{Sri97,BPRS13,Har15,Kahn}, it was not known how to remove the $O(\log^{1/2} k)$ term.\footnote{In Section \ref{sec:conclusion}, we elaborate more on how this question is related to further progress on the Koml\'os problem.}

\medskip
\noindent \textbf{Adaptive Affine Spectral-Independence: Further Improving the Beck-Fiala Bound.}
Finally, we introduce an {\em adaptive} variant of the basic method which, when combined with the multi-layered method above, further improves upon Theorem \ref{thm:bf_weak-informal} as follows.

\begin{restatable}[Further improvement to Beck-Fiala bound]{theorem}{BeckFialaStrong}\label{thm:bf_strong}
Let $A \in \{0,1\}^{m \times n}$ be a set system where each
element lies in at most $k$ sets. Then $\disc(A) = \widetilde{O}(k^{1/2} + \log^{1/2} n)$.  
\end{restatable}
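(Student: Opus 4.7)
The plan is to combine Theorem~\ref{thm:bf_conj_large_k}, which already gives the sharp $O(\sqrt{k})$ bound when $k \geq \log^2 n$, with an \emph{adaptive} variant of the affine spectral-independence SDP underlying Theorem~\ref{thm:bf_weak-informal}, designed to handle small $k$ with an $\widetilde{O}(\sqrt{\log n})$ bound. The additive conclusion $\widetilde{O}(\sqrt{k}+\sqrt{\log n})$ then follows either by dispatching to the correct algorithm at each scale of $k$, or, more cleanly, by running a single Brownian walk whose SDP contains both the multi-layered and the adaptive constraints simultaneously.

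First, I would reuse the discrete Brownian walk $x_{t+1} = x_t + \delta\,\chi_t$ from the previous theorems: the increment covariance $M_t = \E_t[\chi_t \chi_t^\top]$ is chosen at each step by an SDP enforcing $M_t \preceq I$, $(M_t)_{jj}=1$ on uncolored coordinates, and affine spectral-independence constraints of the form $\sum_i w_i\,\langle a_i, M_t a_i\rangle \le \sigma^2$. The basic method of Theorem~\ref{thm:bf_weak-informal} uses essentially uniform weights $w_i$ and so spends variance budget on rows that are currently harmlessly small, costing an extra $\log^{1/4} n$ factor. The adaptive variant instead lets $w_i(t)$ depend on the current row discrepancies $d_i(t) := \langle a_i, x_t\rangle$, for instance via an exponential tilt $w_i(t) \propto \exp(\beta\, d_i(t)^2 / B^2)$, where $B = C(\sqrt{k}+\sqrt{\log n})$ is the target bound and $\beta>0$ is a small constant. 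Rows close to the budget then dominate the weighted constraint and are effectively frozen, while rows far from the budget receive negligible weight.

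Second, I would run this adaptive walk inside the multi-layered framework of Theorem~\ref{thm:bf_conj_large_k}: rows are stratified into $O(\log\log n)$ layers by their current discrepancy scale, and the exponential weighting is applied within each layer. The analysis tracks the potential $\Phi(t) := \sum_i \exp(\beta\, d_i(t)^2 / B^2)$. A discrete It\^o expansion decomposes the drift of $\Phi$ into a first-order (martingale) piece and a quadratic piece that is controlled exactly by the adaptive SDP constraint. With $\beta$ chosen so that $\log \Phi(0) = O(\log n)$ and the per-step quadratic drift summed over the whole walk is $O(\log n)$, the potential $\Phi(t)$ stays below $\exp(O(\log n))$ throughout, so $|d_i(t)| \le B$ for every row $i$ at every time $t$, yielding the claimed discrepancy bound.

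The main technical obstacle I expect is establishing feasibility of the adaptive SDP at every step. Because the weights $w_i(t)$ now depend on the walk's own history, one cannot invoke the static vector-discrepancy arguments used in the previous theorems; feasibility must be proved inductively together with the potential bound on $\Phi(t)$. The key structural input is that, at every time $t$, the set of ``heavy'' rows (those with $|d_i(t)|$ close to $B$) is sparse both in columns, by the $k$-column-sparsity of $A$, and in number, by the potential bound just established, and a convex-geometric argument of Banaszczyk type then produces the required slack in the SDP. The hidden $\poly(\log\log n)$ factors in $\widetilde{O}(\cdot)$ arise from the $O(\log\log n)$-layer chaining, and eliminating them is essentially what would be needed to fully resolve the Beck-Fiala conjecture for all $k$.
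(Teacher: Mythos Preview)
Your proposal misidentifies what ``adaptive'' means in this theorem, and the resulting argument has a genuine gap. You make the weights in the SDP depend on the current \emph{discrepancies} $d_i(t)$ via an exponential tilt $w_i(t)\propto\exp(\beta d_i(t)^2/B^2)$, and then track the global potential $\Phi(t)=\sum_i\exp(\beta d_i(t)^2/B^2)$. But this is essentially the standard subgaussian/multiplicative-weights route that already underlies Banaszczyk's bound; without an additional structural idea it gets stuck at $O(\sqrt{k\log n})$, not $\widetilde O(\sqrt{k}+\sqrt{\log n})$. Concretely, your scalar constraint $\sum_i w_i\langle a_i,M_t a_i\rangle\le\sigma^2$ is not the affine spectral-independence constraint the paper uses (which is a genuine PSD inequality $E_sUE_s^\top\preceq(r_s/\eta_s)\diag(E_sUE_s^\top)$), and you never explain why the second-order drift of $\Phi$ sums to $O(\log n)$ rather than $O(n)$ over the whole walk --- your potential lacks the energy term that makes the regularized discrepancy a supermartingale.

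The paper's actual ``adaptive'' idea is orthogonal to yours: it adapts not to the discrepancy of a row but to its current \emph{size} (number of alive elements). Within each danger level $\ell$, medium rows are further bucketed into $O(\log\log n)$ dyadic size classes $q$, and each class gets its own drift parameter $\beta_q^{(\ell)}\approx b_\ell/s_q^{(\ell)}$ and its own affine spectral-independence matrix $E_{t,q}^{(\ell)}$. The point is a cancellation: a class of size $\approx s$ has at most $n_tk_\ell/s$ rows (so the pairwise-independence parameter is $\alpha\approx k_\ell/s$), while the drift is $\theta\approx b_\ell/s$, so the critical ratio $\alpha/\theta\approx k_\ell/b_\ell$ is \emph{independent of $s$}. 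Without size classes one is forced to take $\alpha$ from the smallest medium rows and $\theta$ from the largest, giving $\alpha/\theta\approx k^2/(b\mu)$, which is exactly the obstruction in the basic method. Your proposal does not contain this stratification by size, and the discrepancy-weighted tilt you describe does not recover it.
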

This  essentially replaces the product in Banaszczyk's $O(k^{1/2} \log^{1/2} n)$ bound \cite{Ban98} by a sum.

Our results also never exceed the $O(k)$ Beck-Fiala bound \cite{BF81}. Thus the bounds above can be summarized as follows as a function of $k$ 
\[\disc(A) =
\begin{cases}
  O(k)   & \text{ if $k \leq  \log^{1/2} n$,}\\
  \widetilde{O}(\log^{1/2} n)  & \text{ if $  \log^{1/2}n \leq k \leq \log n$,}\\
  \widetilde{O}(k^{1/2}) & \text{ if $\log n \leq k \leq \log^2 n$,} \\
  O(k^{1/2}) & \text{ if $k \geq \log^2 n$.} 
\end{cases} 
\]
Notice that this strictly improves upon all previous bounds whenever $k=\widetilde{\Omega}(\log^{1/2} n)$. Specifically, for $k=\log n$, where all the previous approaches only gave $O(\log n)$, the bound here is $\widetilde{O}(\log^{1/2}n)$.

\subsection{Our Approach in a Nutshell} 
We give a detailed overview in Section \ref{sec:prelim}, but the high level approach is the following.
Similar to many prior works on discrepancy, 
our algorithms also evolve a {\em fractional coloring} $x_t \in [-1,1]^n$, starting initially at $x_0 = 0^n$, using cleverly chosen tiny random increments $\Delta x_t$.

Roughly speaking, in the prior works  \cite{BDG19, BG17, BLV22}, the increments $\Delta x_t$  are chosen so that (1) {\em large} rows (of size more than $10 k$ in Beck-Fiala) or those with {\em high} discrepancy are {\em blocked}, so that their discrepancies do not increase, and (2) $\Delta x_t$ is $O(1)$ spectrally-independent, and thus looks random-like to every other row. 

However, these approaches get stuck at $O((k \log n)^{1/2})$ for Beck-Fiala.  The problem is that as the elements get colored over time, the number of rows that can be blocked decreases, 
and essentially, one can only afford to block {\em large} rows.
As the discrepancy evolves randomly for each non-large row, its expected discrepancy is $O(k^{1/2})$, but one ends up incurring an extra $O(\log^{1/2} n)$ factor due to a union bound over the rows. We discuss this more quantitatively in Section \ref{sec:prelim}.

To prevent this union bound loss, our idea is to get more control on the {\em joint} evolution of the discrepancies of various rows.
To do this, we add new SDP constraints which ensure that {\em affine} combinations of $\Delta x_t$, corresponding to certain rows, are also spectrally-independent.
This allows us to {\em decouple} the evolution of the discrepancies of these rows, and achieve a tight control on the number of rows with high discrepancy at {\em every} time, so that we can block them throughout the algorithm. We will discuss this more formally in Section \ref{sec:prelim}.

Recently, Bansal and Jiang \cite{BJ25b} also tried to jointly control the discrepancies of multiple rows, but they used a weaker and more ad hoc approach of blocking certain {\em eigenvectors} of the (slightly modified) input matrix. In contrast, our decoupling via affine spectral-independence framework is much more powerful and modular, and in addition to giving better bounds, it gives simpler algorithms, both conceptually and technically, than those in \cite{BJ25b}.

\smallskip\noindent \textbf{Organization.}
We begin by giving an overview of our approach in \Cref{sec:prelim}. 
As a warm-up, we give a simple algorithmic proof of Banaszczyk's $O(\log^{1/2} n)$ bound for the Koml\'os problem in \Cref{subsec:sdp_bana,subsec:bana_bound_komlos}. Then we motivate and present our new framework for going beyond Banaszczyk's bound by decoupling via affine spectral-independence in \Cref{subsec:beyond_bana}. 
In \Cref{sec:bf_weak}, we describe the basic instantiation of our framework and use it to show \Cref{thm:bf_weak-informal}, followed by the improved Koml\'os bound in \Cref{thm:Komlos} in \Cref{sec:komlos}. Then in \Cref{sec:bf_better}, we present the multi-layered algorithm and prove \Cref{thm:bf_conj_large_k}. Finally, in \Cref{sec:bf_strong}, we describe our adaptive extension and show \Cref{thm:bf_strong}. We conclude in Section \ref{sec:conclusion} with a discussion of the main bottleneck in improving our results further, and state some open questions.

\section{Preliminaries and Overview of Our Approach}
\label{sec:prelim}

We start with some notation and  the algorithmic framework  that we will use throughout the paper. 

\smallskip
\noindent \textbf{Notation.}
Let $A  \in \R^{m\times n}$ denote the input matrix. We use $a_1,\ldots,a_m$ to denote the rows of $A$, and $a_i(j) = A_{ij}$ to denote its $j$th entry.\footnote{Throughout, the $j$th entry of any vector $u$ will be denoted as $u(j)$.} Throughout, we 
use $i$ to index the rows and $j$ to index columns.
Note that it suffices to upper bound the {\em one-sided} discrepancy $\max_i \langle a_i, x \rangle$ (instead of $\max_i |\langle a_i,x\rangle|$),
which can be done by appending the rows $-a_i$ to $A$ for all $i \in [m]$.\footnote{For the Beck-Fiala problem, this will create $-1$ entries in the matrix $A$, which our algorithms can handle.} 
For a vector $v=(v(1),\ldots,v(n))$, we sometimes use $v^2$ to denote the vector with entries $v(j)^2$ for $j\in [n]$.

\subsection{Algorithmic Framework: SDP-Guided Discrete Brownian Motion}
\label{subsec:basic_framework}
As in many previous algorithmic works on discrepancy \cite{BG17,BDG19,BLV22,BJ25b}, our algorithms start with the coloring $x_0 = 0^n$ and evolve it over time using (tiny) random increments, until some final coloring in $\{-1,1\}^n$ is reached. The time $t$ will range from $0$ to $n$, and starting from $t=0$, it will be updated in discrete increments of size $dt$. It is useful to view $dt$ as infinitesimally small, though we will set $dt = 1/\poly(n)$ so that the algorithm runs in time $\poly(n)$. 

We use the following notation to describe these dynamics.
Let  $x_t \in [-1,1]^n$ denote the {\em fractional} coloring at time $t \in [0,n]$. Let $\mathcal{V}_t := \{j \in [n]: |x_t(j)| \leq 1- 1/(2n)\}$ be the set of {\em alive} elements at time $t$, and $n_t := |\mathcal{V}_t|$ denote the number of alive elements.
At time $t$, the algorithm chooses a random vector $v_t \in \R^{\mathcal{V}_t}$, i.e., only the alive variables are updated, satisfying $\E[v_t]=0$, $\|v_t\|_2=1$ and $x_t \perp v_t$ (and various other properties that will be specified later).
The coloring $x_t$ is updated to $x_{t+dt}= x_t + dx_t$ with
\[d x_t = v_t \sqrt{dt}.\]
Notice that once a variable $j$ is no longer alive (aka dead or frozen), its value $x_t(j)$ stays unchanged, and hence it never becomes alive. Rounding a dead variable to either $1$ or $-1$ incurs $O(1/n)$ error per variable, but this is negligible and will be ignored henceforth. 
Also notice that $\|v_t\|_2=1$ and $x_t \perp v_t$ ensures that $\|x_t\|_2^2=t$ at all steps $t$, and that the process ends by $t=n$. 

\smallskip
\noindent{\bf Choosing $v_t$ via an SDP.}
The power of this framework comes from the ability to choose $v_t$ cleverly and adaptively at each time $t$.
To do this, at each time step $t$ the algorithm computes a PSD matrix $U_t$ by solving a semidefinite program (SDP), and samples $v_t$ with $U_t$ as the covariance matrix (up to scaling, to ensure that $\|v_t\|_2=1$).  
More formally, given the PSD matrix $U_t \in \R^{\mathcal{V}_t \times \mathcal{V}_t}$, let $U_t = Q_t \Lambda_t Q_t^\top$ be its spectral decomposition. We choose
\begin{align} \label{eq:find_vt}
    v_t := (\Tr(U_t))^{-1/2} \, U_t^{1/2} Q_t r_t = (\Tr(U_t))^{-1/2}\, Q_t \Lambda_t^{1/2} r_t,
\end{align}
where $r_t \in \R^{\mathcal{V}_t}$ is a random vector with i.i.d.~Rademacher random variables (taking values $1$ or $-1$ with probability $1/2$ each). Note that $\|v_t\|_2=1$ as  
\begin{align*} 
\|v_t\|_2^2 = v_t^\top v_t = \frac{1}{\Tr(U_t)} r_t^\top \Lambda_t^{1/2} Q_t^\top Q_t \Lambda_t^{1/2} r_t = \frac{\Tr(\Lambda_t)}{\Tr(U_t)} = 1 .
\end{align*}
To avoid notational clutter, we will often write $v_t \in \R^{n_t}$ and $U_t \in \R^{n_t \times n_t}$ instead of $\R^{\mathcal{V}_t}$ and $\R^{\mathcal{V}_t \times \mathcal{V}_t}$, as the relevant $n_t$ coordinates will always be those in $\mathcal{V}_t$.

The SDP used to obtain $U_t$, and hence $v_t$, plays a major role in all previous and our algorithms. 

\subsection{SDP Formulation for Banaszczyk's Bound}
\label{subsec:sdp_bana}

As a warm-up, we first describe the SDP used to algorithmically attain Banaszczyk's bound in a general form below. We then explain the intuition behind the constraints and what they imply.

Let $W \subset \R^h$ be a subspace with dimension $\dim(W) = \delta h$. Consider the following SDP with the matrix variable $U \in \R^{h\times h}$ and parameters $0 < \kappa,\eta \leq 1$.

\begin{align} 
 \text{(SDP for Banaszczyk)} \qquad   
    U_{jj}                                &\leq 1                                         && \text{for all } j \in [h] \label{sdp:jj} \\
    \Tr(U)                                &\geq \kappa h                       \label{sdp:trace}         \\
    \langle ww^\top, U \rangle           &= 0                                          && \text{for all } w \in W  \label{sdp:orthog}\\
    U                                     &\preceq \frac{1}{\eta} \diag(U)              \label{sdp:isotropic}\\                  U &\succeq 0 \label{sdp:psd}
\end{align}
To understand this SDP, consider some feasible solution $U$ and choose a random vector $u = (u(1),\ldots,u(h)) \in \R^h$ with $\E[u]=0$ and covariance $\E[uu^\top]=U$. It is useful to view $u$ as the random coloring update $v_t$ in \Cref{subsec:basic_framework} (up to a scaling factor).

Constraints \eqref{sdp:jj} and \eqref{sdp:trace} are standard and easy to parse. Roughly speaking, they ensure that the random coloring update $u$ is ``uniformly spread out'' on at least $\kappa$ fraction of its coordinates. Constraint \eqref{sdp:trace} also rules out the trivial solution $U=0$.

\smallskip\noindent 
\textbf{Blocking.} The constraints \eqref{sdp:orthog} imply that $\langle u,w\rangle =0$ for all $w\in W$. 
Intuitively, $W$ contains the rows that we would like to {\em block}, and \eqref{sdp:orthog} ensures that the discrepancy of such rows do not increase in the current step. The choice of $W$ for Banaszczyk's bound is specified in \Cref{subsubsec:alg_bana}.

\noindent {\bf Spectral-Independence of Coloring Update.} 
 Let us now consider \eqref{sdp:isotropic}. This is equivalent to saying that $a^T U a \leq (1/\eta) a^T \diag (U) a$ for all vectors $a = (a(1),\ldots,a(h)) \in \R^h$, and hence the random vector $u$ satisfies, 
 \begin{align} \label{eq:sub-isotropic_update}
 \E\left[\langle a, u \rangle ^2 \right] = \E \Big[ (\sum_{j=1}^h a(j) u(j))^2\Big] \leq \frac{1}{\eta} \sum_{j=1}^h a(j)^2 u(j)^2.
 \end{align}
This property of $u$ was referred to as {\em sub-isotropy} previously in \cite{BG17, Ban24}, but we refer to this as {\em spectral-independence} here to be consistent with other literature \cite{AnariLG20}.  
If $\eta=1$, this is the same as saying that the coordinates of the coloring update $u(1),\ldots,u(h)$ are pairwise independent. 
To guarantee the feasibility of the SDP, one can set (say) $\eta = 1/10$, so intuitively \eqref{sdp:isotropic} will ensure that the coloring update $u$ is essentially pairwise independent.

For our purposes, it is useful to think of the parameters $\delta, \kappa, \eta$ as small constants, in which case this SDP is feasible due to the following result. 
\begin{theorem}[\cite{BG17,Ban24}]
\label{thm:sdp_feasibility_bana} 
For any arbitrary subspace $W$ of dimension $\delta h$, the SDP given by the constraints \eqref{sdp:jj}-\eqref{sdp:psd} is feasible
whenever $\delta + \kappa + \eta \leq 1$. 
\end{theorem}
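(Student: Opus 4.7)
The plan is to exhibit an explicit feasible matrix. The most natural candidate is $U_0 := P_{W^\perp}$, the orthogonal projection onto $W^\perp$. This choice immediately satisfies the subspace/orthogonality constraint \eqref{sdp:orthog} (since $U_0 w = 0$ for every $w \in W$), the PSD constraint \eqref{sdp:psd}, and the diagonal bound \eqref{sdp:jj} (since $(U_0)_{jj} = 1 - \|P_W e_j\|_2^2 \in [0,1]$). Its trace is exactly $(1-\delta) h$, which is at least $\kappa h$ given $\delta + \kappa + \eta \leq 1$, so the trace bound \eqref{sdp:trace} holds with $\eta h$ of slack.

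All the work is in satisfying the spectral-independence constraint \eqref{sdp:isotropic}. For a projection such as $U_0$ the top eigenvalue is $1$, while $(U_0)_{jj}$ can be arbitrarily small (even zero on coordinates $j$ with $e_j \in W$), so $U_0 \preceq (1/\eta)\diag(U_0)$ need not hold in general. My plan is to reshape $U_0$ as $U := P_{W^\perp} D P_{W^\perp}$ for a well-chosen nonnegative diagonal $D$, which preserves \eqref{sdp:orthog} and \eqref{sdp:psd} while allowing us to down-weight or zero out the problematic coordinates. The slack of $\eta h$ in the trace is exactly the budget available for deleting up to $\eta h$ coordinates without violating \eqref{sdp:trace}.

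Concretely, I would identify a bad set $B \subset [h]$ of size at most $\eta h$ (for instance the coordinates $j$ with the largest $\|P_W e_j\|_2^2$, or those where $(U_0)_{jj}$ falls below a threshold), set $D_{jj} = 0$ for $j \in B$ and $D_{jj} = 1$ otherwise, and check that the resulting $U$ meets all the constraints. The diagonal bound \eqref{sdp:jj} is preserved since $D \preceq I$; the trace decreases by at most $|B| \leq \eta h$, keeping \eqref{sdp:trace} intact because $\delta + \kappa + \eta \leq 1$. The quantity left to control is the top generalized eigenvalue of $U$ relative to $\diag(U)$, which must be at most $1/\eta$.

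The main obstacle is precisely this last eigenvalue check: showing that, after deleting a carefully chosen set of at most $\eta h$ coordinates, the resulting matrix satisfies $U \preceq (1/\eta)\diag(U)$. A clean alternative that sidesteps explicit construction is SDP duality: assume infeasibility and apply a Farkas-type lemma to produce dual multipliers $y \geq 0$ for \eqref{sdp:jj}, $\mu \geq 0$ for \eqref{sdp:trace}, a PSD matrix $Z$ for \eqref{sdp:isotropic}, and a matrix supported on $W$ for \eqref{sdp:orthog}, and then derive a numerical contradiction from $\delta + \kappa + \eta \leq 1$. The fully additive and symmetric way the three parameters enter the hypothesis is suggestive that the duality route will in fact be cleaner than the direct construction, since the three ``budgets'' then just need to be summed against a common bound of $h$.
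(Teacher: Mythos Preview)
Your direct construction has a concrete gap. Setting $U = P_{W^\perp} D P_{W^\perp}$ with $D_{jj}=0$ for $j\in B$ does \emph{not} zero out row and column $j$ of $U$: one has $U_{jj}=\sum_{k\notin B}(P_{W^\perp})_{jk}^2$, which is typically positive, and the generalized eigenvalue you must control can survive intact. Take $h=3$ and $W=\spn(e_1-\epsilon e_2)$ with small $\epsilon>0$, so $\delta=1/3$. The $\{1,2\}$ principal block of $P_{W^\perp}$ is a scalar multiple of the rank-one matrix $\left(\begin{smallmatrix}\epsilon^2&\epsilon\\\epsilon&1\end{smallmatrix}\right)$, and for any rank-one PSD $2\times 2$ matrix $M$ with positive diagonal, $\diag(M)^{-1/2}M\,\diag(M)^{-1/2}$ has top eigenvalue $2$, forcing $\eta\le 1/2$. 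A short computation shows that zeroing any single $D_{jj}$ still leaves a rank-one $\{1,2\}$ block in $P_{W^\perp}DP_{W^\perp}$, so the obstruction persists; yet the hypothesis permits $\eta$ up to $2/3-\kappa$, and for $\eta\in(1/2,2/3)$ your budget $\lfloor\eta h\rfloor=1$ allows deleting only one coordinate. So neither ``smallest diagonal'' nor any other single-coordinate deletion rescues the ansatz here.

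Your fallback to SDP duality is exactly the route the paper takes (this statement is the $q=0$ case of the more general feasibility theorem, proved in the appendix). The one ingredient you are missing is the lemma that closes the dual argument: for any PSD $H\in\R^{h\times h}$ there is a subspace $W_H$ of dimension at least $(1-\eta)h$ on which $v^\top H v\le(1/\eta)\,v^\top\diag(H)\,v$. This holds because $\diag(H)^{-1/2}H\,\diag(H)^{-1/2}$ has trace $h$, so at most $\eta h$ of its eigenvalues exceed $1/\eta$. Testing the dual feasibility inequality against an orthonormal basis of $W^\perp\cap W_H$, whose dimension is at least $(1-\delta-\eta)h\ge\kappa h$, then yields $\sum_i\alpha_i\ge\kappa h$ directly---confirming your intuition that the three budgets simply add.
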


\subsection{Banaszczyk's $O(k^{1/2} \log^{1/2} n)$ Bound for the Beck-Fiala Problem}
\label{subsec:bana_bf_overview}

Let us first see how Banaszczyk's $O(k^{1/2} \log^{1/2} n)$ bound for the Beck-Fiala problem follows using the framework and the SDP above, as we will build on these ideas later. We only describe the main ideas, and defer the technical details to \Cref{subsubsec:alg_bana_komlos} (where we prove Banaszczyk's more general Koml\'os bound in \Cref{lem:algo_bana_komlos}). 
\subsubsection{An Algorithm for Banaszczyk's Beck-Fiala Bound}
\label{subsubsec:alg_bana}
Let $A \in \{0,1\}^{m \times n}$ be the input matrix with $k$-sparse columns. Consider the following algorithm using the framework and SDP from \Cref{subsec:basic_framework,subsec:sdp_bana}. 

At each time $t$, repeat the following until $n_t \leq  10$:
\begin{enumerate}
\item Call a row $i$ {\em large} (at time $t$)  if its size, i.e., the number of alive elements $|\{j \in \mathcal{V}_t: a_i(j) = 1\}|$, is more  than $10 k$. 
Let $W_t$ be the span of all the large rows and the single vector $x_t$. 
    \item Solve the SDP in \eqref{sdp:jj}-\eqref{sdp:psd} with $W = W_t$, $\kappa = 1/4$ and $\eta=1/4$. Use the resulting solution $U_t$ to find $v_t$ as in \eqref{eq:find_vt}, and update the coloring $x_t$ by $v_t \sqrt{dt}$.
\end{enumerate}
Note that at any time $t$, there can be at most $n_t/10$ large rows (as the column sparsity is $k$), and thus \Cref{thm:sdp_feasibility_bana} implies that the SDP is always feasible.

\subsubsection{Main Ideas Behind the Analysis}
\label{subsubsec:analysis_Banaszczyk}
The analysis relies on two crucial observations:
\begin{enumerate}
    \item As the algorithm always blocks all the large rows with sizes more than $10k$, one can pretend that each row has size $\leq 10k$ (as any row incurs zero discrepancy while it is large). 
    \item As the coloring update $v_t$ is spectrally-independent,   the almost pairwise independence 
    condition in \eqref{eq:sub-isotropic_update} gives that the discrepancy of each row $i$ has essentially {\em subgaussian} tails. 
\end{enumerate}
More quantitatively, these observations give that $\disc_t(i) := \langle a_i, x_t \rangle$ is essentially $O(k)$-subgaussian (see \Cref{lem:algo_bana_komlos} for details) with tail bound 
\begin{align} \label{eq:disc_tail_bound_overview}
    \p(\disc_t(i) \geq c k^{1/2}) \approx \exp(- c^2) .
\end{align}
Now setting $c := O(\log^{1/2} n)$ and taking a union bound over all rows, recovers Banaszczyk's $O(k^{1/2} \log^{1/2} n)$ bound for the Beck-Fiala problem.

\subsubsection{Where Can We Improve Upon Banaszczyk's Bound?}
\label{subsubsec:improve_Bana}
At first sight, the algorithm above seems wasteful as it only blocks large rows (even though, in principle, it could have also blocked, say, $n_t/10$ high discrepancy rows at any time $t$).

However, it is unclear to exploit this to improve upon the $O(\log^{1/2} n)$ loss above. For example, suppose we set $c= \log^{0.49} n$. Then, 
by the tail bound in \eqref{eq:disc_tail_bound_overview}, there can be up to $m \exp(-c^2) \gg n^{0.999}$ rows with discrepancy $\Omega(ck^{1/2})$ (already by time, say, $n/2$). 
But now as the algorithm progresses and $n_t$ drops below $O(m\exp(-c^2))$, one can no longer afford to block even a reasonable fraction of these high discrepancy rows.
 
This was a key bottleneck in almost all previous approaches. In particular, as we can only guarantee that  each row individually evolves randomly, and as the discrepancies of different rows can be highly correlated, it is unclear how to beat the union bound loss in the analysis above.

\smallskip
\noindent \textbf{Independent Evolution of Rows Helps.} 
Interestingly, it turns out that if the discrepancies of the different rows evolve {\em independently}, then one can do substantially better. 

To demonstrate this concretely, let us assume that $k = \log n$ for the rest of this overview. We will show  a  $\mu := O(\log^{2/3} n)$ bound (assuming independent evolution of rows), improving upon Banaszczyk's $O(\log n)$ bound.

Call a row {\em dangerous} if its discrepancy is more than $\mu$. Fix some column $j$, and consider the $k$ non-zero entries in it. As the rows evolve independently, using \eqref{eq:disc_tail_bound_overview} with $c = \Theta(\log^{1/6} n)$ (so that $\mu = c k^{1/2}$), the probability that more than $\mu$ rows in this column become dangerous is at most 
\begin{align} \label{eq:tail_bound_ind}
\approx \binom{k}{\mu} \cdot \big( \exp(-c^2) \big)^{\mu} \approx \exp(- \mu c^2) = \frac{1}{\poly(n)} .
\end{align}
By a union bound over all columns and time steps, we can assume that (whp) every column has at most $\mu$ dangerous rows at any time. 
This implies that no matter {\em which} $n_t$ columns are alive at time $t$,
there can be at most $\mu n_t$ entries in dangerous rows, among those alive columns. So there can be at most $n_t/10$ dangerous rows with sizes more than $10 \mu$, which we can easily block.

Note that there can still be many {\em tiny} dangerous rows with sizes less than $10\mu$, but we can simply ignore them all as they cannot incur more than $10\mu$ additional discrepancy henceforth!
Together, this gives the promised $O(\mu) = O(\log^{2/3} n)$ discrepancy bound. 

To summarize this important point, in an ideal scenario where the discrepancies of rows evolve {\em independently}, we can ensure that
\begin{align*}
(*) \quad &\text{At any time $t$, even though $\gg n_t$ rows may have high discrepancy},\\ & \text{all but $\ll n_t$ of them will be negligibly tiny}.
\end{align*}

\smallskip
\noindent 
{\bf Handling General Instances.}
Of course, this independent evolution of rows is too good to hope for in general (e.g., if two rows are identical, their discrepancies will also be identical).
Nonetheless, we will show below how to {\em decouple} the row discrepancies, so that the ideal scenario above holds approximately. For concreteness, we show next an $O(\log^{3/4} n)$ bound (for $k=\log n$) using some basic ideas. Finally, we sketch the other refinements used to improve the bounds even further.

\subsection{Beyond Banaszczyk: Decoupling via Affine Spectral-Independence} 
\label{subsec:beyond_bana}
We now present our  approach to approximately decouple the discrepancies of various rows, and show how it gives 
an $O(\log^{3/4} n)$ bound for general Beck-Fiala instances with $k=\log n$.

\subsubsection{Affine Spectral-Independence}
\label{subsubsec:affine_spectral-independence}

Again, we start by describing a general formulation of the new SDP that we use.

Let $W \subset \R^h$ be a subspace with dimension $\dim(W) = \delta h$, and let $E_s \in \mathbb{R}^{r_s h \times h}$ be  a collection of matrices for $s=1,2,\ldots,q$, where $r_s \geq 1$ for each $s\in [q]$. 
Consider the following SDP with the matrix variable $U \in \R^{h\times h}$ and parameters $0\leq \kappa,\eta,\eta_s \leq 1$.
\begin{align} 
 \text{(Our SDP)} \qquad   
    U_{jj}                                &\leq 1                                         && \text{for all } j \in [h] \label{sdp:jj_new} \\
    \Tr(U)                                &\geq \kappa h                       \label{sdp:trace_new}         \\
    \langle ww^\top, U \rangle           &= 0                                          && \text{for all } w \in W  \label{sdp:orthog_new}\\
    U                                     &\preceq \frac{1}{\eta} \diag(U)              \label{sdp:isotropic_new}\\
    E_sUE_s^\top                          &\preceq \frac{r_s}{\eta_s} \diag(E_s U E_s^\top) && \text{for all } s \in [q] \label{sdp:pairwise-disc}\\
    U                                     &\succeq 0 \label{sdp:psd_new}
\end{align}

Compared to the SDP in \eqref{sdp:jj}-\eqref{sdp:psd}, the key new constraints are \eqref{sdp:pairwise-disc} which we explain below.\footnote{Strictly speaking, \eqref{sdp:isotropic_new} is a special case of \eqref{sdp:pairwise-disc} (with $E_s=I$), but it is useful to consider them separately as they will imply different properties and are used separately in the analysis.}  Fix some matrix $E_s$, and consider the random vector $d = E_s u \in \R^{r_sh}$, where as before, $u$ is a mean-zero random vector (corresponding to the coloring update $v_t$) whose covariance matrix is $U$. 
Its $i$-th entry $d(i) = \langle E_{s,i},u \rangle $,  where  $E_{s,i}$ is the $i$-th row of $E_s$.

Constraint \eqref{sdp:pairwise-disc} is equivalent to saying that for every vector $\gamma\in \R^{r_sh}$, we have
\begin{align} \label{eq:affine_subisotropy_overview}
\E \left[ (\gamma^T E_s u)^2 \right] = \E \Big[ (\sum_{i=1}^{r_s h} \gamma(i) d(i))^2 \Big]     \leq \frac{r_s}{\eta_s} \sum_{i=1}^{r_sh} \gamma(i)^2 \,\, \E[ d(i)^2]  =  \frac{r_s}{\eta_s} \sum_i \gamma(i)^2  \E[\langle E_{s,i},u \rangle^2]. 
\end{align}

\smallskip
\noindent {\bf Affine Spectral-Independence.}
We call this property of $u$ as being $(r_s/\eta_s)$-{\em affine spectrally-independent} with respect to the matrix $E_s$. 
Equivalently, the random vector $E_s u$ is $(r_s/\eta_s)$-spectrally-independent.

Intuitively, if the rows of $E_s$ are directions whose discrepancies we wish to control, then the vector $E_s u$ is the discrepancy update when the coloring is updated by $u$. So \eqref{sdp:pairwise-disc} says that the discrepancies of rows of $E_s$ evolve $O(r_s/\eta_s)$-pairwise independently.

We will choose the matrices $E_s$ above  to correspond to certain (slightly modified) rows of the input matrix $A$.  In general however, the $E_s$ can be chosen arbitrarily. This makes the framework quite powerful and potentially useful in other scenarios as well.

As an extension of \Cref{thm:sdp_feasibility_bana}, we have the following feasibility result for the new SDP. The proof of this result is deferred to Appendix \ref{subsec:sdp_feasibility}.

\begin{restatable}
{theorem}{SubIsoVecGeneral}\label{thm:sdp_feasibility} 
For any arbitrary subspace $W$, and any matrices $E_1,\ldots,E_q$, the SDP given by the constraints \eqref{sdp:orthog_new}-\eqref{sdp:psd_new} is feasible
whenever $\delta + \kappa + \eta +  \sum_{s=1}^q \eta_s \leq 1$. 
\end{restatable}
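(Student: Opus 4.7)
The plan is to prove SDP feasibility via duality, directly extending the argument for \Cref{thm:sdp_feasibility_bana}. I would first recast \eqref{sdp:jj_new}--\eqref{sdp:psd_new} as the optimization problem $\max \Tr(U)$ over all $U$ satisfying \eqref{sdp:jj_new}, \eqref{sdp:orthog_new}, \eqref{sdp:isotropic_new}, \eqref{sdp:pairwise-disc}, \eqref{sdp:psd_new}; feasibility of the original SDP is equivalent to this maximum being at least $\kappa h$. Slater's condition is easy to verify (take $U = \varepsilon P_{W^\perp}$ for small $\varepsilon > 0$, which lies in the strict interior of all the spectral constraints), so strong duality applies. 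I would then suppose for contradiction that the primal optimum is strictly less than $\kappa h$ and analyze the resulting dual certificate.

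The dual variables are: $\alpha \in \R^h_{\ge 0}$ paired with \eqref{sdp:jj_new}, a symmetric $\Gamma$ supported on $W\otimes W$ paired with \eqref{sdp:orthog_new}, a PSD matrix $M$ paired with \eqref{sdp:isotropic_new}, PSD matrices $N_s$ paired with \eqref{sdp:pairwise-disc} for each $s$, and a PSD matrix $Z$ paired with \eqref{sdp:psd_new}. Stationarity of the Lagrangian in $U$ gives the identity
\[
\diag(\alpha) \;=\; I + \Gamma + Z + \bigl(\tfrac{1}{\eta}\diag(M) - M\bigr) + \sum_{s=1}^{q} \Bigl(\tfrac{r_s}{\eta_s}\,\diag\!\bigl(E_s^{\top}\diag(N_s)\,E_s\bigr) - E_s^{\top} N_s E_s\Bigr),
\]
with dual objective $\|\alpha\|_1 < \kappa h$ by hypothesis. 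The heart of the argument is to extract a ``dimension accounting'' by pairing this identity against $I$ and against $P_{W^\perp}$. Pairing against $P_{W^\perp}$ annihilates $\Gamma$ (since $\Gamma = P_W \Gamma P_W$), yielding $\Tr(P_{W^\perp}) = (1-\delta)h$ on the right side up to nonnegative terms coming from $Z$, $M$, and the $N_s$. Using complementary slackness at the optimum (so that $\langle M, (1/\eta)\diag(U^\star) - U^\star\rangle = 0$ and similarly for each $N_s$), the spectral-independence contributions telescope: the $M$-term contributes at most $\eta h$ and each $N_s$-term contributes at most $\eta_s h$ to the budget, independently of $r_s$. The latter relies on the fact that the $r_s$-factor in $(r_s/\eta_s)\diag(\cdot)$ exactly compensates the $r_s h$-dimensional ambient space of $E_s U E_s^\top$, so the effective rank of the dual operator reduces back to $h$.

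Combining these contributions yields
\[
(1-\delta)\, h \;\le\; \|\alpha\|_1 \;+\; \eta h \;+\; \sum_{s=1}^{q} \eta_s h,
\]
which rearranges to $\|\alpha\|_1 \ge (1 - \delta - \eta - \sum_s \eta_s)\, h \ge \kappa h$ by the hypothesis $\delta + \kappa + \eta + \sum_s \eta_s \le 1$, contradicting $\|\alpha\|_1 < \kappa h$. Hence the primal optimum is at least $\kappa h$, proving feasibility.

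The main obstacle I anticipate is making the ``each affine spectral-independence constraint costs $\eta_s h$'' accounting rigorous. Unlike the $M$-term, where the operator $\tfrac{1}{\eta}\diag(M) - M$ lives naturally in $\R^{h \times h}$, the operator $\tfrac{r_s}{\eta_s}\,\diag(E_s^{\top}\diag(N_s)\,E_s) - E_s^{\top} N_s E_s$ is built from the rectangular $E_s \in \R^{r_s h \times h}$ and the potentially large matrix $N_s$, and showing that its trace contribution against $P_{W^\perp}$ (or $I$) scales as $\eta_s h$ rather than $r_s \eta_s h$ is delicate. I expect this to follow from the structural identity $\langle \diag(N_s), \diag(E_s E_s^\top)\rangle = \sum_i (N_s)_{ii}\|E_{s,i}\|_2^2$ and a short PSD decomposition lemma that uses complementary slackness to bound $(r_s/\eta_s)\langle N_s, \diag(E_s U^\star E_s^\top)\rangle - \langle N_s, E_s U^\star E_s^\top\rangle$ in terms of $\eta_s \Tr(U^\star)$, absorbing the $r_s$ factor cleanly.
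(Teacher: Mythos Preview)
Your duality setup is on the right track, but the heart of the argument---pairing the stationarity identity against a \emph{fixed} test matrix like $P_{W^\perp}$ and invoking complementary slackness---does not go through. Complementary slackness ties the dual variables $M,N_s$ to the primal optimizer $U^\star$, not to $P_{W^\perp}$; pairing the identity against $U^\star$ just recovers strong duality $\|\alpha\|_1=\Tr(U^\star)$, which is circular. And pairing against $P_{W^\perp}$ gives no control on the spectral terms: the quantity $\langle P_{W^\perp},\, H-\tfrac{1}{\eta}\diag(H)\rangle$ can be made arbitrarily large by choosing $H$ rank-one along a direction in $W^\perp$ whose diagonal mass sits on coordinates with small $(P_{W^\perp})_{jj}$, while still keeping the dual constraint feasible. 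So the ``$\eta h$ per constraint'' accounting cannot come from a trace argument, and the obstacle you flag in your last paragraph is real rather than a technicality. (Incidentally, your stationarity identity has a stray $\diag$: the gradient of $U\mapsto\langle N_s,\diag(E_sUE_s^\top)\rangle$ is $E_s^\top\diag(N_s)E_s$, not $\diag(E_s^\top\diag(N_s)E_s)$.)

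The paper's argument is different in kind: rather than a fixed test matrix, it builds one adapted to the dual solution via an eigenvalue/Markov step. For $H$ one normalizes $\widetilde H=\diag(H)^{-1/2}H\,\diag(H)^{-1/2}$, observes $\Tr(\widetilde H)=h$, so at most $\eta h$ eigenvalues exceed $1/\eta$; the span of the remaining eigenvectors (pulled back by $\diag(H)^{1/2}$) gives a subspace $W_H\subset\R^h$ of dimension $\ge(1-\eta)h$ on which $v^\top Hv\le\tfrac{1}{\eta}v^\top\diag(H)v$. For each $G_s$ (your $N_s$) the same normalization yields $\Tr(\widetilde G_s)\le r_sh$, so at most $\eta_s h$ eigenvalues exceed $r_s/\eta_s$---this Markov step is precisely where the $r_s$ gets absorbed---and the resulting large subspace of $\R^{r_sh}$ is pulled back through $E_s$ (together with $\ker E_s$) to a subspace $W_s\subset\R^h$ of dimension $\ge(1-\eta_s)h$. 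Intersecting $W^\perp\cap W_H\cap\bigcap_sW_s$ leaves dimension $\ge\kappa h$, and testing the dual inequality on $\kappa h$ orthonormal vectors there forces $\sum_i\alpha_i\ge\kappa h$. No complementary slackness is used anywhere.
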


One may wonder if the extra factor of $O(r_s)$ in \eqref{sdp:pairwise-disc} is really needed. In \Cref{subsubsec:necessity_r_s} we give a simple example to show that this is indeed the case.

\subsubsection{Decoupling Bound for Discrepancy and the Basic Method} 
\label{subsub:decoupling}

We now show a basic version of this approach that gives $\mu := O(\log^{3/4} n)$ discrepancy for $k = \log n$. 

Here, we will choose only one matrix $E_s$ (so $q = 1$). Consider some time $t$. As before, {\em large} rows with sizes $>10k$ are always blocked, and {\em tiny} rows with sizes $<10\mu$ can be ignored.
So let $\mathcal{M}_t$ denote 
the set of all {\em medium} (or {\em interesting}) rows with sizes in the range $[10 \mu, 10k]$. 
As each medium row has size at least $10\mu$, we have $|\mathcal{M}_t| \leq k n_t/(10\mu)$. 
We set $E_1$ to be the matrix consisting of all medium rows $\mathcal{M}_t$, and as $r_1 =|\mathcal{M}_t|/n_t \leq k/10\mu$, the parameter $r_1/\eta_1= \Theta(k/\mu)$ in \eqref{sdp:pairwise-disc}.

Call a medium row {\em dangerous} if its discrepancy is more than (our target) $\mu$. 
We view $\mu$ as $\mu = c k^{1/2}$
with $c = \Theta(\log^{1/4} n)$.
We will show that at {\em every} time $t$, at most $n_t/10$ rows can be dangerous, and hence they can all be blocked.

Fix some column $j$, and consider the $k$ non-zero entries in it. As the discrepancies of medium rows evolve using $\Theta(k/\mu)$-affine spectrally independent updates (as in \eqref{eq:affine_subisotropy_overview}), we can show that the probability that more than $\mu$ rows in this column become dangerous is (roughly)
\begin{align*}
        \big( \exp(- c^2) \big)^{\mu \cdot (\eta_1/r_1)} =  \exp( - \Theta(c^2\mu^2/k)) = 1/\poly(n) .
 \end{align*}
where the last step uses that $c^2 \mu^2 = c^4 k = \Theta(k \log n)$, by our choice of parameters.

The formal statement of the decoupling bound above is more technical and is given in Theorem \ref{thm:general_freedman}, and proved in \Cref{subsec:proof_general_freedman}.
Intuitively, it is analogous to the tail bound in \eqref{eq:tail_bound_ind} for the independent case, but loses the extra affine spectral-independence parameter $(r_1/\eta_1)$ in the exponent.

Putting this all together, the dangerous rows can be blocked at all times, and arguing as in Section \ref{subsubsec:improve_Bana} gives that the final discrepancy is $O(\mu)$ with high probability.

\subsubsection{Extensions of the Basic Method}
 
We now briefly mention a few other ideas required for the improved Koml\'os bound in \Cref{thm:Komlos}, and our stronger Beck-Fiala bounds in \Cref{thm:bf_conj_large_k,thm:bf_strong}. 

\smallskip
\noindent \textbf{Our Improved Koml\'os Bound.} Roughly speaking, we decompose the matrix $A$ into multiple Beck-Fiala instances at various {\em scales} based on the magnitudes of its entries, and apply \Cref{thm:bf_weak-informal} to these instances in parallel, incurring an $\widetilde{O}( \log^{1/4} n)$ discrepancy for each of them. Doing this naively would lose an extra $O(\log n)$ factor in the overall discrepancy due to the $O(\log n)$ scales, but a key part of the argument is to show that $O(\log \log n)$ scales suffice. See \Cref{sec:komlos} for details.

\smallskip
\noindent \textbf{A Multi-Layered Method.} In the basic method above, we need to set the affine spectral-independence parameter $r_1/\eta_1$ rather conservatively, as we simply block all the dangerous rows whose sizes are more than $10\mu$. However, instead of just naively blocking them, we can try to run a smarter algorithm on top whose job is to control the discrepancies of these dangerous rows.

Building on this idea iteratively leads to a natural {\em multi-layered} method: the rows that become dangerous at level $1$ are fed into an algorithm running at level $2$, and rows which become dangerous again there are fed into an algorithm running at level $3$ and so on, where the sparsity parameter $k$ improves as the levels get higher (and hence the discrepancy loss does not accumulate much over the levels). 
The details of this multi-layered method are in \Cref{sec:bf_better}.

\smallskip
\noindent \textbf{Adaptive Affine Spectral-Independence.} In the multi-layered method above, the affine spectral-independence parameters $r_s/\eta_s$ for the different algorithms (at different levels)
were set to fixed values. One can extend this method even further, by allowing the algorithm to vary these parameters for each row individually over time in a dynamic manner, depending on the number of currently alive elements in that row. 
We defer the details of this adaptive method to \Cref{sec:bf_strong}.

\section{A Basic Algorithm for the Beck-Fiala Problem}
\label{sec:bf_weak}
We start with the most basic application of our framework and use it to 
prove \Cref{thm:bf_weak-informal} 
for the Beck-Fiala problem.
For technical reasons, we will work with the following (stronger) statement. 

\begin{restatable}[Basic Beck-Fiala bound]{theorem}{BeckFialaWeakb}
\label{thm:bf_weak}
Let $A \in \{0, \pm 1\}^{m \times n}$ be a matrix where each column has at most $k$ non-zeros. Then 
there is a polynomial time algorithm that finds a coloring with
$\disc(A) = O(b)$, where $b$ is the smallest number that satisfies 
\[
b^2 \max(1, b/\log n) \gg k (\log n \log \log n)^{1/2} \quad \text{and} \quad b \gg (k \log \log n)^{1/2} .
\]
\end{restatable}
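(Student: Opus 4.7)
The plan is to instantiate the SDP-guided Brownian motion of \Cref{subsec:basic_framework} with the affine spectral-independence SDP of \Cref{subsubsec:affine_spectral-independence}, following the blueprint sketched in \Cref{subsub:decoupling}. Set the target discrepancy $\mu = \Theta(b)$. At each time $t$, classify every alive row $a_i$ by the size of its surviving support $s_{i,t} := |\operatorname{supp}(a_i) \cap \mathcal{V}_t|$ as \emph{large} if $s_{i,t} > 10k$, \emph{medium} if $s_{i,t} \in [10\mu, 10k]$, or \emph{tiny} if $s_{i,t} < 10\mu$. Tiny rows can be ignored, since they contribute at most $10\mu$ to the final discrepancy regardless of the walk. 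Large rows are always blocked; a column-sparsity argument caps their number by $n_t/10$. The central task is to control the medium rows.

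At each step I solve the SDP \eqref{sdp:jj_new}--\eqref{sdp:psd_new} on the alive coordinates with blocking subspace $W_t = \operatorname{span}(\{x_t\}\cup \mathcal{L}_t \cup \mathcal{D}_t)$, where $\mathcal{L}_t$ is the set of large rows and $\mathcal{D}_t \subseteq \mathcal{M}_t$ is the current set of \emph{dangerous} medium rows (those with $\langle a_i, x_t\rangle > \mu/2$), together with a single affine matrix $E^{(t)}$ whose rows are the medium rows $\mathcal{M}_t$ restricted to $\mathcal{V}_t$, so that $r_1 = |\mathcal{M}_t|/n_t$; the parameters $\kappa,\eta,\eta_1$ are sufficiently small absolute constants. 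Double-counting gives $|\mathcal{M}_t|\leq k n_t/(10\mu)$, hence $r_1 \leq k/(10\mu)$, so \eqref{sdp:pairwise-disc} forces the medium-row update $E^{(t)} v_t$ to be $O(k/\mu)$-spectrally-independent. As long as the invariant $|\mathcal{D}_t|\leq n_t/10$ holds at time $t$, the blocking subspace $W_t$ has dimension at most a small constant fraction of $n_t$ and \Cref{thm:sdp_feasibility} guarantees SDP feasibility, producing a valid $v_t$.

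The heart of the proof is maintaining the invariant $|\mathcal{D}_t| \leq n_t/10$ at every step. Fix a column $j$; the subvector of $E^{(t)} v_t$ corresponding to medium rows passing through $j$ is an $O(k/\mu)$-spectrally-independent martingale increment. Summing in $t$ and applying a Freedman-type self-normalized tail inequality (the paper's general decoupling bound, invoked here as a black box), the number of rows through $j$ that ever become dangerous exceeds $\mu^\ast$ with probability $\lesssim \exp\!\bigl(-c\, \mu^\ast \mu^2 \eta_1/(k\cdot r_1)\bigr)$ while $r_1 = \Theta(k/\mu)$, degrading to $\exp(-c\mu^2/\log n)$ once $|\mathcal{M}_t|$ saturates at $O(n_t)$ (equivalently $\mu \gtrsim \log n$). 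Setting $\mu^\ast = \mu$ and union-bounding over the $n$ columns, the $O(\log\log n)$ dyadic size-scales arising during the walk, and the $\poly(n)$ time steps, one needs the exponent to exceed $\log n + \log\log n$; unwinding this gives exactly $b^2 \gg k(\log n\log\log n)^{1/2}$ when $b \leq \log n$ and $b^3/\log n \gg k(\log n\log\log n)^{1/2}$ when $b > \log n$, matching the $\max(1,b/\log n)$ factor in the statement. The side condition $b \gg (k \log\log n)^{1/2}$ absorbs the lower-order Freedman variance terms summed over the $O(\log\log n)$ scales. Once the invariant is preserved at all times, each row is blocked the instant it becomes dangerous, so its discrepancy grows by at most $O(\mu)$ further; together with the negligible contribution of tiny rows, the final discrepancy is $O(\mu) = O(b)$.

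The main expected obstacle is verifying the decoupling tail inequality under \emph{affine} (rather than genuine) spectral-independence: the extra $r_s$ factor in \eqref{sdp:pairwise-disc} must be threaded carefully through the self-normalization of a Freedman-type martingale argument, and one must verify that the resulting tail is strong enough to sustain a union bound at every subsequent step so the SDP feasibility and the invariant propagate inductively. Once this probabilistic lemma is in place, the remainder of the proof is a routine induction on $t$.
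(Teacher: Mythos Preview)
Your outline has the right architecture but misses two ingredients that are essential, not cosmetic.

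\textbf{Missing negative drift via the regularized discrepancy.} You track the raw discrepancy $\langle a_i,x_t\rangle$ and take $E^{(t)}$ to have rows $a_i$. But then the increments $\langle a_i,dx_t\rangle$ have zero mean, so in the language of \Cref{defn:weak_pairwise_ind} the drift parameter is $\theta=0$, and the decoupling bound \Cref{thm:general_freedman} is vacuous (its additive term is $O(\lambda\alpha\log n/\theta)$). The paper does not apply the decoupling bound to the discrepancy itself; it introduces the \emph{regularized} discrepancy $Y_t(i)=\langle a_i,x_t\rangle+\beta G_t(i)$ with $\beta=b/(10k)$ and $G_t(i)=\sum_j a_i(j)^2(1-x_t(j)^2)$, takes the rows of $E_t$ to be $E_{t,i}(j)=a_i(j)-2\beta a_i(j)^2 x_t(j)$, and blocks $E_{t,i}$ (not $a_i$) for dangerous rows. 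The energy term gives $\E[dY_t(i)]=-\beta\,\E\langle a_i^2,v_t^2\rangle\,dt$, which yields $\theta=\Omega(\beta)=\Omega(b/k)>0$ (\Cref{lem:weak_pairwise_bf_weak}); this is what makes the decoupling bound bite. The heuristic tail $\exp(-c\mu^*\mu^2\eta_1/(kr_1))$ you quote is the informal picture from \Cref{subsub:decoupling}, not the actual statement of \Cref{thm:general_freedman}, and it cannot be obtained without the drift. Relatedly, the second hypothesis $b\gg(k\log\log n)^{1/2}$ is not a ``lower-order variance term summed over scales''; it is exactly what is needed so that the choice $\lambda=(\log\log n)/b$ satisfies the constraint $\lambda\le\theta/2$ in \Cref{thm:general_freedman}.

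\textbf{Wrong tiny threshold.} With your choice $\mu=\Theta(b)$, the analysis only yields $b^2\gg k(\log n\log\log n)^{1/2}$ in every regime; you do \emph{not} get the improved condition $b^3/\log n\gg k(\log n\log\log n)^{1/2}$ when $b>\log n$. The paper sets $\mu=\max(b,\,b^2/\log n)$. When $b>\log n$ this makes $\mu=b^2/\log n\gg b$, which shrinks $r_1\le k/\mu$ and hence $\alpha$, buying exactly the extra $b/\log n$ factor in the first hypothesis. The price is that tiny rows now have size up to $b^2/\log n$, so the trivial bound ``at most $10\mu$'' is too weak; instead one invokes Banaszczyk's guarantee (\Cref{lem:algo_bana_komlos}) on a row once it becomes tiny to get $O(\sqrt{\mu\log n})=O(b)$. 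Your ``saturation at $|\mathcal M_t|=O(n_t)$ when $\mu\gtrsim\log n$'' does not correspond to anything in the actual argument: with $\mu=b\le k$ one always has $r_1=k/\mu\ge 1$, and no saturation occurs. There are also no ``$O(\log\log n)$ dyadic size-scales'' in this proof; those appear only in the adaptive algorithm of \Cref{sec:bf_strong}.
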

Our algorithm will also always satisfy the $O(k)$ Beck-Fiala bound \cite{BF81}. Considering the various regimes of $k$ in Theorem \ref{thm:bf_weak}, a direct computation gives the following explicit bounds.
\[\disc(A) =
\begin{cases}
  O(k)   & \text{ if $k \leq  \log^{1/2} n$}\\
   \widetilde{O}( k^{1/2}  \log^{1/4} n)  & \text{ if $ \log^{1/2} n   \leq k  \leq \log^{3/2}n$}\\
  \widetilde{O}(k^{1/3} \log^{1/2} n)  & \text{ if $  \log^{3/2}n \leq k \leq \log^3 n$}\\
  O(k^{1/2})  (\log \log n)^{1/2} & \text{ if $k \geq \log^{3}n$}. 
\end{cases}
\]
Notice that for $k \geq \log^3 n$, this bound already comes within a $(\log \log n)^{1/2}$ factor of the conjectured $O(k^{1/2})$ bound.
Moreover, these bounds are never worse than $\widetilde{O}(k^{1/2} \log^{1/4}n)$ (for every $k$), and hence \Cref{thm:bf_weak}  implies \Cref{thm:bf_weak-informal}. 
 
We now describe the algorithm.

\subsection{The Basic Algorithm}
\label{subsec:alg_bf_weak}
Let $A \in \{0, \pm 1\}^{m \times n}$ be an input instance of the Beck-Fiala problem,  where each column has at most $k$ non-zeros.
We will show that $\disc(A)=O(b)$, where $b$ 
satisfies the conditions in \Cref{thm:bf_weak}. For now, we ignore these conditions and just view $b$ as some target bound in the range $[\sqrt{k}, k]$.

We first give some definitions and note some simple observations.

\smallskip
\noindent {\bf Large, Tiny, and Medium Rows.}
At any time $t$, the {\em size} of a row $i$ is the number of non-zero entries of $a_i$ in the alive elements, i.e.,~$|\{j \in \mathcal{V}_t: a_i(j) \in \{ \pm 1\}\}|$. Observe that the size of a row can only decrease over time.

We say that row $i$ is {\em large} at time $t$, if it has size more than $10k$.

We say that row $i$  is {\em tiny} or {\em irrelevant}, if it has size less than \[\mu := \max(b, b^2/\log n).\] 
This parameter $\mu$ will play an important role.

We call a row  {\em medium} or {\em interesting} if has size in the range $[\mu, 10k]$.

A simple observation that we will use repeatedly is that, both large and tiny rows can essentially be ignored while bounding their contributions to discrepancy. Indeed, there can be at most $n_tk/10k = n_t/10$ large rows at any time $t$ (as $A$ restricted to columns in $\mathcal{V}_t$ has at most $k n_t$ non-zero entries), and
so the algorithm can always choose $v_t$ to be orthogonal to each large row (i.e., add them to the subspace $W$ in the SDP), and hence ensure that
$\langle a_i,x_t\rangle =0 $ as long as row $i$ stays large.

Similarly, we can simply ``forget about'' a row once it becomes tiny, as by Banaszczyk's bound in \Cref{lem:algo_bana_komlos}, 
any such row can incur at most  $O((\mu \log n)^{1/2})$ (or, trivially, at most $O(\mu)$) discrepancy henceforth. Our choice of $\mu$ exactly satisfies $\min(\mu, (\mu \log n)^{1/2}) = b$, and thus with high probability, every tiny row incurs additional discrepancy at most $O(b)$.

Thus, medium rows are the only ones whose discrepancies need to be carefully controlled.
We denote the set of such rows at time $t$ by $\mathcal{M}_t$. As each column $j$ has sparsity $k$ and each medium row has size at least $\mu$, we have the bound 
\begin{equation}
\label{eq:mt-bound}
|\mathcal{M}_t| \leq n_t k/\mu.
\end{equation}
In particular, 
    $|\mathcal{M}_t| \leq n_t \log n$
(as $\mu \geq k/\log n$, since $\mu \geq b^2/\log n$ by definition and our target $b \geq \sqrt{k}$).

\smallskip
\noindent {\bf The Matrix $E_t$.} Let $\beta := b/(10k) \leq 0.1$. 
At time $t$,
define $E_t \in \R^{|\mathcal{M}_t| \times n_t}$ to be the matrix with rows $E_{t,i}$ corresponding to all medium rows $i \in [|\mathcal{M}_t|]$, with entries 
\[E_{t,i}(j) = a_i(j) - 2 \beta  a_i(j)^2 x_t(j).\]
The matrix $E_t$ will be useful for controlling the discrepancies of the rows, which will become clear later in \Cref{subsubsec:bound_disc_bf_weak}. 

\noindent {\bf Dangerous Rows.} We call a row $i$ {\em dangerous} at time $t$, if (i) it is medium and (ii) its current discrepancy $\langle a_i, x_t \rangle \geq 2b$. 

\smallskip 

We now describe the algorithm formally.

\smallskip
\noindent \textbf{The Algorithm.} Let $A \in \{0,\pm 1\}^{m \times n}$ be an input matrix with column-sparsity $\leq k$. 
Consider the following algorithm using the framework from \Cref{sec:prelim}, and the notation above.

At each time $t$, repeat the following until $n_t \leq  10$.
\begin{enumerate}
\item  Let $W_t$ denote the subspace spanned by 
(i) all the large rows $a_i$, (ii) rows $E_{t,i}$ for all dangerous rows $i$, and (iii) the (single) vector $x_t$. If $\dim(W_t) > n_t/3$, declare FAIL. 
    \item Solve the SDP \eqref{sdp:jj_new}-\eqref{sdp:psd_new} with the subspace $W = W_t$, the matrix $E_1=E_t$ (here $q=1$, and we have only one such matrix $E_s$) and parameters  $\delta=1/3, \kappa = 1/6$ and $\eta= \eta_1 = 1/4$.
    Use the resulting SDP solution $U_t$ to find $v_t$ as in \eqref{eq:find_vt}, and update the coloring $x_t$ by $v_t \sqrt{dt}$.
\end{enumerate}

\subsection{The Analysis}
We now analyze the algorithm and prove \Cref{thm:bf_weak}.

Notice that if the algorithm does not declare FAIL in Step 1 at time $t$, then $\dim(W_t)\leq n_t/3$ (i.e.,~$\delta \leq 1/3$), and hence  the SDP in Step 2 always has a feasible solution by \Cref{thm:sdp_feasibility}, as $\delta + \kappa + 2 \eta \leq 1$ by the choice of the parameters.

\smallskip 
\indent {\bf Overview.} 
To prove \Cref{thm:bf_weak}, we will show that with high probability, 
(i) the algorithm never declares FAIL during its execution, and (ii) each row has discrepancy $O(b)$.

To prove (i), we will show that with high probability, at any time $t$, the number of dangerous rows never exceeds $n_t/10$. As the number of large rows at any time is at most $n_t/10$, this will imply that $\dim(W_t) \leq n_t/10 + n_t/10 + 1 \leq n_t/3$ as desired.
This is the part that requires new ideas and is harder, and is described in Section \ref{subsubsec:num_dang_rows_bf_weak} - \ref{subsubsec:dang_row_col_proof_bf_weak}. 

To prove (ii), it suffices to bound the discrepancy incurred by a row while it is medium.
As discussed before, this is because (1) a row incurs no discrepancy as long as it is large, as $v_t$ is always orthogonal to every large row,  and 
(2) as our SDP solution satisfies $U_t \leq 4 \diag(U_t)$ at all $t$, 
by \Cref{lem:algo_bana_komlos}, we always have the Banaszczyk guarantee. In particular,
once any row becomes tiny (and hence of size at most $\mu$), it can incur at most $O(\min(\mu,(\mu \log n)^{1/2})) = O(b)$ discrepancy henceforth, with high probability.
 
\subsubsection{Bounding the Discrepancy of Medium Rows} 
\label{subsubsec:bound_disc_bf_weak}

We now bound the discrepancy of medium rows. As discussed previously, it suffices to only bound the one-sided discrepancy $\max_i \langle a_i, x \rangle$.
To do so, instead of tracking the actual discrepancy of medium rows $i$, we will work with the {\em regularized} discrepancy
\begin{equation}
\label{eq:reg_disc}
Y_t(i)  := 
    \langle a_i, x_t \rangle  + \beta G_t(i)  ,
\end{equation}
where $\beta = b/(10k)$ and $G_t(i) = \sum_{j=1}^n a_i(j)^2 (1 - x_t(j)^2)$ is the {\em energy} of row $i$ at time $t$.

Intuitively, the reason for considering the regularized discrepancy $Y_t(i)$ instead of the actual discrepancy $\langle a_i, x_t \rangle$ is because $Y_t(i)$ admits a {\em negative drift}, as is shown below in \eqref{eq:dZ_t}, whereas $\langle a_i, x_t \rangle$ is only a martingale since $\E[d x_t] = 0$. As we will see later in \Cref{subsec:decoupling_bounds,subsec:proof_general_freedman}, the negative drift of $Y_t(i)$ will be crucial for us to decouple their evolutions.

As $G_t(i)\geq 0$, $Y_t(i)$ upper bounds the discrepancy $\langle a_i,x_t\rangle $.
Moreover, as $G_t(i)\leq 10 k$ for a non-large row, 
the choice of $\beta$ ensures that $Y_t(i) \leq \langle a_i,x_t\rangle + b$.
In particular, $Y_{t_i^*}(i) \leq b$ 
when a row $i$ first becomes medium at time $t_i^*$ (we set $t_i^* = 0$ if row $i$ is never large, and $t_i^* = n$ if it starts tiny).

Let $Y_t$ denote the vector of regularized discrepancies.
We think of the vector $Y_t$ as defined only on the coordinates in $\mathcal{M}_t$, i.e., $Y_t \in \mathbb{R}^{\mathcal{M}_t}$.  By design, the algorithm has the following useful property.

\begin{lemma} \label{lem:reg_disc_bounded_bf_weak}
    If the algorithm does not FAIL, then at each time $t$, each medium row $i$ has $Y_t(i) \leq 3b$.
\end{lemma}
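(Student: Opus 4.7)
The plan is to track how $Y_t(i)$ evolves under the SDP-guided Brownian motion and show that the regularizer $\beta G_t(i)$ is exactly calibrated so that the boundary condition ``$\langle a_i,x_t\rangle \geq 2b$'' (the dangerous threshold) plus the extra term $a_i(j) - 2\beta a_i(j)^2 x_t(j)$ in $E_{t,i}$ conspire to make $Y_t(i)$ non-increasing once it gets close to $3b$.

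First I would compute $dY_t(i)$ using the definition of $Y_t(i) = \langle a_i, x_t\rangle + \beta G_t(i)$ with $G_t(i) = \sum_j a_i(j)^2(1 - x_t(j)^2)$. Since $dx_t = v_t \sqrt{dt}$, a direct differentiation gives
\[
dY_t(i) = \sum_{j} \big(a_i(j) - 2\beta\, a_i(j)^2 x_t(j)\big)\, v_t(j)\sqrt{dt} \;-\; \beta \sum_{j} a_i(j)^2\, v_t(j)^2\, dt,
\]
where the first sum is exactly $\langle E_{t,i}, v_t\rangle\sqrt{dt}$ by the definition of $E_{t,i}$. The second-order term is manifestly non-positive, so it can only decrease $Y_t(i)$. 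This identity is the key structural fact: the $-2\beta a_i(j)^2 x_t(j)$ correction baked into $E_{t,i}$ is precisely the derivative of $-\beta x_t(j)^2$ times $a_i(j)^2$, which is what makes the first-order term of $dY_t(i)$ be governed by $E_{t,i}$ (not by $a_i$).

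Next I would do a case split on whether row $i$ is dangerous at time $t$. If it is dangerous, then by construction $E_{t,i}$ lies in $W_t$, so the SDP constraint \eqref{sdp:orthog_new} forces $\langle E_{t,i}, v_t\rangle = 0$; combined with the non-positive second-order term, this yields $dY_t(i) \leq 0$. If it is not dangerous, then $\langle a_i, x_t\rangle < 2b$, and because $i$ is medium it has size at most $10k$, so $G_t(i) \leq 10k$ and hence $\beta G_t(i) \leq b$, giving $Y_t(i) < 3b$ outright. Thus at any time $t$ either $Y_t(i)$ is already strictly below $3b$, or it is not increasing.

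To close the argument I would invoke the initial condition: when row $i$ first becomes medium at time $t_i^*$, we have $Y_{t_i^*}(i) \leq b$ (as noted in the excerpt, since $\langle a_i, x_{t_i^*}\rangle = 0$ if $i$ was previously large and blocked, and $\beta G \leq b$ in any case). Then a simple barrier/first-crossing argument finishes: let $\tau$ be the first time $Y_t(i)$ would exceed $3b$; at $\tau$ either the previous step had $\langle a_i,x_\tau\rangle < 2b$ (so $Y_\tau < 3b$, contradiction), or the previous step had $\langle a_i,x_\tau\rangle \geq 2b$ (so $dY \leq 0$, also contradicting an upward crossing). The main technical nuisance (and the only real obstacle) is handling the discrete step size $dt$ carefully so that a single step cannot overshoot from $\langle a_i,x_t\rangle < 2b,\ Y_t(i) < 3b$ to $Y_t(i) > 3b$; since $|dx_t| = O(\sqrt{dt}) = 1/\poly(n)$, this overshoot is negligible and can be absorbed either by taking $dt$ small enough or by proving the slightly weaker bound $Y_t(i) \leq 3b + o(1)$ which is equivalent for the downstream arguments.
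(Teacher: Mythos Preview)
Your proposal is correct and follows essentially the same approach as the paper: compute $dY_t(i) = \langle E_{t,i}, v_t\rangle\sqrt{dt} - \beta\langle a_i^2, v_t^2\rangle\,dt$, observe that for dangerous rows the first term vanishes (since $E_{t,i}\in W_t$) while the second is non-positive, and for non-dangerous medium rows bound $Y_t(i) < 2b + \beta\cdot 10k = 3b$ directly. Your treatment is in fact slightly more explicit than the paper's, which compresses the barrier argument and the $O(\sqrt{dt})$ overshoot into a single parenthetical remark.
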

\begin{proof}
    As $d x_t = v_t \sqrt{d t}$,  the increment of $Y_t(i)$ is given by
\begin{align}
 d Y_t(i) 
    & = \langle a_i, v_t \sqrt{dt}\rangle  - \beta \sum_{j=1}^n a_i(j)^2\left( \big( x_t(j) + v_t(j) \sqrt{dt}\big)^2 - x_t(j)^2\right) \nonumber \\
    & = \langle E_{t,i},v_t\rangle \sqrt{dt} - \beta \langle a_i^2 , v_t^2 \rangle dt.\label{eq:dZ_t}
\end{align}
where we use $a_i^2$ and $v_t^2$ to denote the vectors with entries $a_i(j)^2$  and $v_t(j)^2$, and as the entries of $E_{t,i}$ are exactly $a_i(j)- 2 \beta a_i(j)^2 x_t(j)$.

As the subspace $W_t$ contains the vector $E_{t,i}$ for all dangerous rows $i$ (assuming the algorithm does not FAIL), $Y_t(i)$ can only decrease deterministically for dangerous rows. As a medium row becomes dangerous when its discrepancy exceeds $2b$, its regularized discrepancy can never exceed $3b$ (plus possibly a $O(\sqrt{dt})$ term due to change in discrepancy from time $t-dt$ to $t$, which is negligible).
\end{proof}

To summarize, we have shown that if the algorithm does not FAIL, then with high probability, each row has discrepancy $O(b)$. 

\subsubsection{Bounding the Number of Dangerous Rows}
\label{subsubsec:num_dang_rows_bf_weak}

We now show that with high probability, the algorithm does not FAIL.
As the algorithm can only FAIL at time $t$ if the number of dangerous rows exceeds $n_t/10$, it suffices to show the following.
\begin{claim} \label{cl:no_bad_bf_weak}
If $b$ satisfies that (i) $b \mu \gg k (\log n \log \log n)^{1/2}$ and (ii) $b \gg (k \log \log n)^{1/2}$.
Then with high probability, at all times $t$, there are at most $n_t/10$ dangerous rows.
\end{claim}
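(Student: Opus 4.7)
My plan would be the following. First, I would reduce the global target $|D_t|\leq n_t/10$ to a per-column statement via double counting: since every dangerous row is medium and hence has size at least $\mu$,
\[
\mu\cdot|D_t| \;\leq\; \sum_{i\in D_t}\mathsf{size}_t(i) \;=\; \sum_{j\in\mathcal V_t}\#\{i\in D_t : a_i(j)\neq 0\},
\]
so it suffices to prove that, with probability $1-1/\poly(n)$, every column $j$ contains at most $\mu/10$ dangerous rows at every time $t$, as this would force the right-hand side above to be at most $n_t\mu/10$.

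To attack this per-column statement, I would fix a column $j$ and let $J_j=\{i: a_i(j)\neq 0\}$, so $|J_j|\leq k$. I would then study the vector-valued process $(Y_t(i))_{i\in J_j}$ of regularized discrepancies from \eqref{eq:reg_disc}, which upper-bounds the raw discrepancy since $\beta G_t(i)\geq 0$. By \eqref{eq:dZ_t} its martingale part is $\langle E_{t,i}, v_t\rangle\sqrt{dt}$, while the drift $-\beta\langle a_i^2, v_t^2\rangle\,dt$ is non-positive and can be discarded for an upper tail analysis. The SDP constraint \eqref{sdp:pairwise-disc}, applied with $E_1=E_t$ and $\eta_1=1/4$, combined with $r_1=|\mathcal M_t|/n_t\leq k/\mu$ from \eqref{eq:mt-bound}, makes these martingale increments $O(k/\mu)$-affine spectrally-independent across the coordinates $i\in J_j\cap \mathcal M_t$. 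In addition, since $U_t \preceq 4\,\diag(U_t)$ at all times, each individual coordinate $Y_t(i)$ enjoys an $O(k)$-subgaussian bound by the Banaszczyk guarantee (\Cref{lem:algo_bana_komlos}), yielding a single-row exceedance probability of $\exp(-\Omega(b^2/k))$ at the threshold $2b$.

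The central step is to feed this into the vector-valued decoupling tail bound \Cref{thm:general_freedman}, with $p=\lceil\mu/10\rceil$ coordinates and threshold $2b$, which gives schematically
\[
\p\big[\,\exists\,t\colon \#\{i\in J_j\cap\mathcal M_t : Y_t(i)\geq 2b\}\geq \mu/10\,\big]
\;\lesssim\; \binom{k}{\mu/10}\,\exp\!\Big(-\Omega\Big(\tfrac{b^2}{k}\cdot\tfrac{\mu/10}{k/\mu}\Big)\Big)
\;=\; \exp\!\Big(O(\mu\log k) - \Omega\!\Big(\tfrac{b^2\mu^2}{k^2}\Big)\Big),
\]
where the quantity $p/(r_1/\eta_1)=\Omega(\mu^2/k)$ plays the role of an ``effective independent sample size'' extracted by decoupling. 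Hypothesis~(i), $b\mu\gg k(\log n\log\log n)^{1/2}$, is calibrated so that $b^2\mu^2/k^2$ dominates both the combinatorial entropy $O(\mu\log k)$ and the additional $O(\log n)$ needed to absorb a union bound over the $n$ columns and $\poly(n)$ discretized time steps. Hypothesis~(ii), $b\gg(k\log\log n)^{1/2}$, ensures that $b^2/k$ is large enough to place us inside the subgaussian (rather than Bernstein-dominated) regime of \Cref{thm:general_freedman}, so that the stated decoupling exponent is the operative one.

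The main obstacle I anticipate is that the set $J_j\cap\mathcal M_t$ is time-varying: rows enter $\mathcal M_t$ as they shrink past size $10k$ and leave as they shrink past size $\mu$, so the ambient dimension of the vector martingale fluctuates. I plan to handle this with a row-by-row stopping-time argument: once a row becomes dangerous, its vector $E_{t,i}$ lies in $W_t$, its martingale part is frozen, and $Y_t(i)$ becomes monotonically non-increasing thereafter. Consequently the count of rows in $J_j$ that have \emph{ever} been dangerous is monotone non-decreasing in $t$, and it suffices to apply \Cref{thm:general_freedman} at the first stopping time at which this count would hit $\mu/10$, sidestepping the need for a naive continuous-time union bound. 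The remaining work is the verification that \Cref{thm:general_freedman} applies cleanly to the truncated vector process and that the non-positive drift contributes only a harmless correction.
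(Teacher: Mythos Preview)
Your high-level plan is right and matches the paper: reduce to a per-column bound via double counting (exactly as you wrote), then apply the decoupling bound \Cref{thm:general_freedman} to the vector $(Y_t(i))_{i\in J_j}$ of regularized discrepancies. But your description of what \Cref{thm:general_freedman} delivers, and of how hypotheses~(i) and~(ii) enter, has a genuine gap.

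First, the negative drift cannot be ``discarded for an upper tail analysis.'' \Cref{thm:general_freedman} does not give a subgaussian or binomial-type tail of the form $\binom{k}{p}\exp(-\Omega(\cdot))$; its conclusion is that the number of bad coordinates is at most
\[
\frac{m}{e^{\lambda B}}+O\!\left(\frac{\lambda\alpha\log n}{\theta}\right),
\]
and the second term blows up if $\theta=0$. The drift $-\beta\langle a_i^2,v_t^2\rangle\,dt$ is precisely what supplies $\theta=\beta/5$ (this is \Cref{lem:weak_pairwise_bf_weak}), and the entire proof of \Cref{thm:general_freedman} is a Freedman-type supermartingale argument on $\sum_i\min\{e^{\lambda Y_t(i)},e^{\lambda B}\}$ that needs $\lambda\leq\theta/2$. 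There is no ``subgaussian versus Bernstein regime'' in the theorem; hypothesis~(ii) is used to verify $\lambda=(\log\log n)/b\leq\theta/2$, not a Bernstein condition.

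Second, the theorem takes no target count $p$ as input, so there is no $\binom{k}{p}$ term and no ``effective independent sample size $p/(r_1/\eta_1)$'' in the exponent. The paper's actual argument (\Cref{lem:per-column}) plugs in $m\leq k$, $\alpha=4k/\mu$, $\theta=\beta/5$, $B=b$, $\lambda=(\log\log n)/b$, and checks that
\[
\frac{k}{e^{\lambda b}}+O\!\left(\frac{\lambda\alpha\log n}{\theta}\right)=\frac{k}{\log n}+O\!\left(\frac{k^2\log n\log\log n}{\mu b^2}\right)\ll\mu,
\]
which is exactly where hypothesis~(i) enters. Your schematic exponent $b^2\mu^2/k^2$ would lead to conditions that do not match~(i) and~(ii).

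Finally, the time-varying index set is handled more simply than your stopping-time proposal: one extends $Y_t(i)$ to equal $b$ while row $i$ is large and $-\infty$ once it becomes tiny, so the process lives on all of $J_j$ throughout and \Cref{thm:general_freedman} applies directly.
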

For each column $j$, let $\mathcal{C}_j$ denote the set of (at most $k$) rows $i$ with $a_i(j)\neq 0$.
To prove Claim \ref{cl:no_bad_bf_weak},
it suffices to show the following guarantee for each column.

\begin{restatable}[Dangerous rows per column]{lemma}{DangRowsPerCol}
\label{lem:per-column} If $b$ satisfies that (i) $b \mu \gg k (\log n \log \log n)^{1/2}$ and (ii) $b \gg (k \log \log n)^{1/2}$, then 
    with high probability, for all times $t$ and all  columns $j\in [n]$,
at most $\mu/10$ rows in $\mathcal{C}_j$ can become dangerous.
\end{restatable}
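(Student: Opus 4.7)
Fix a column $j \in [n]$ and let $\mathcal{C}_j = \{i : a_i(j) \neq 0\}$, so $|\mathcal{C}_j| \leq k$. The plan is first to establish, for every subset $S \subseteq \mathcal{C}_j$, a per-subset tail bound of the form
\begin{equation*}
    \Pr\!\left[\forall i \in S:\; \exists t,\; Y_t(i) \geq 2b\right] \;\leq\; \exp\!\left(-\Omega(b^2 |S| \mu / k^2)\right),
\end{equation*}
and then to aggregate this via the decoupling bound of Theorem~\ref{thm:general_freedman} into $\Pr[N \geq \mu/10] \leq \exp(-\Omega(b^2 \mu^2/k^2))$, where $N$ is the count of rows in $\mathcal{C}_j$ that ever become dangerous. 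The lemma then follows by a union bound over the $n$ columns and the $\poly(n)$ time steps, since hypothesis~(i) gives $b^2\mu^2/k^2 \gg \log n \log\log n$.

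For the per-subset bound, I would apply the exponential-supermartingale method to $M_t := \sum_{i \in S} Y_t(i)$. By \eqref{eq:dZ_t}, its martingale part is $\sqrt{dt}\sum_{i\in S}\langle E_{t,i}, v_t\rangle$ and its drift is $-\beta\,dt\,\sum_{i\in S}\langle a_i^2, v_t^2\rangle$ with $\beta = b/(10k)$. The crucial input is the affine spectral-independence constraint~\eqref{sdp:pairwise-disc} applied to $\gamma = \ind_S$, which, combined with $r_t := |\mathcal{M}_t|/n_t \leq k/\mu$ from~\eqref{eq:mt-bound} and $\eta_1 = 1/4$, gives
\begin{equation*}
    \E\!\left[\bigl(\textstyle\sum_{i\in S}\langle E_{t,i}, v_t\rangle\bigr)^{\!2} \mid \mathcal{F}_t\right] \;\leq\; \frac{4k}{\mu}\sum_{i\in S}\E\!\left[\langle E_{t,i}, v_t\rangle^{2}\mid \mathcal{F}_t\right].
\end{equation*}
Balancing this second-moment estimate against the drift term-by-term, using $\|E_{t,i}\|^2 = O(\|a_i\|^2) = O(k)$ together with the isotropy $U_t \preceq 4 \diag(U_t)$ from~\eqref{sdp:isotropic_new}, one checks that $\exp(\lambda M_t)$ is a supermartingale for $\lambda = \Theta(b \mu/k^2)$. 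Since all rows of $S$ becoming dangerous forces $M_\infty \geq 2b|S|$, Markov applied to the MGF then yields the displayed per-subset tail.

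The step I expect to be the main obstacle is the conversion from the per-subset bound to the count bound: a naive $\binom{k}{|S|}$ union bound over subsets of size $\mu/10$ is too lossy in the borderline regimes (such as $k \asymp \log^3 n$, where the per-subset exponent is only polylogarithmic in $n$), so one really needs to invoke Theorem~\ref{thm:general_freedman}, whose role is precisely to turn affine-spectral-independent subset estimates into a tight bound on the count of dangerous rows without paying the binomial cost. Hypothesis~(ii), $b \gg (k \log\log n)^{1/2}$, enters here to ensure $c^2 := b^2/k \gg \log\log n$ so that the supermartingale bootstrap is in the exponential rather than a purely sub-Gaussian regime. The remaining technical points --- handling the stopping times $\tau_i := \inf\{t : Y_t(i) \geq 2b\}$ via stopped processes, the time-dependence of $\mathcal{M}_t$ (where the worst-case $r_t/\eta_1 \leq 4k/\mu$ is uniform in $t$), and verifying that the $\beta$-drift exactly cancels the second-order MGF term at the chosen $\lambda$-scale --- are standard, and are where the exact calibration $\beta = b/(10k)$ pays off.
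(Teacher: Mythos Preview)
Your ingredients are right, but the assembly is confused. The per-subset supermartingale step is an unnecessary detour, and your description of Theorem~\ref{thm:general_freedman} as a tool that ``turns affine-spectral-independent subset estimates into a tight bound on the count'' mischaracterizes it. Theorem~\ref{thm:general_freedman} does not take per-subset tail bounds as input: its hypothesis is exactly the $(\alpha,\theta)$-pairwise independence of the increment vector $dY_t$ (Definition~\ref{defn:weak_pairwise_ind}), and its conclusion is directly the count bound you want. The affine-spectral-independence and drift computations you carry out inside your per-subset argument --- applying constraint~\eqref{sdp:pairwise-disc} and comparing against the $\beta$-drift --- are precisely what verifies $(4k/\mu,\beta/5)$-pairwise independence of $dY_t$, but this must be checked for \emph{all} test vectors $\gamma$, not just $\gamma=\ind_S$; that is the content of Lemma~\ref{lem:weak_pairwise_bf_weak}.

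The paper's proof accordingly skips the per-subset step entirely: it verifies $(4k/\mu,\beta/5)$-pairwise independence of $dY_t|_{\mathcal{C}_j}$, then applies Theorem~\ref{thm:general_freedman} directly with $m\le k$, $B=b$, $\alpha=4k/\mu$, $\theta=\beta/5$, and $\lambda=(\log\log n)/b$. Hypothesis~(ii) is exactly the condition $\lambda\le\theta/2$, and hypothesis~(i) makes the output
\[
\frac{k}{e^{\lambda b}}+O\Bigl(\frac{\lambda\alpha\log n}{\theta}\Bigr)
=\frac{k}{\log n}+O\Bigl(\frac{k^2\log n\log\log n}{\mu b^2}\Bigr)\ll\mu.
\]
Separately, your per-subset claim ``all rows of $S$ becoming dangerous forces $M_\infty\ge 2b|S|$'' is false as written: the rows may become dangerous at different times, and once blocked $Y_t(i)$ only decreases, so $M_t=\sum_{i\in S}Y_t(i)$ need never reach $2b|S|$. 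Stopped processes could patch this, but since the per-subset bound is not actually an input to anything, the point is moot.
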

Lemma \ref{lem:per-column} implies Claim \ref{cl:no_bad_bf_weak} for the following reason. As each dangerous row has size at least $\mu$, no matter which subset of the columns $\mathcal{V}_t$ are currently alive at time $t$, Lemma \ref{lem:per-column} implies that at most $ (\mu/10) n_t/\mu  = n_t/10$ rows can be dangerous.

\subsubsection{A Decoupling Bound via Affine Spectral-Independence}
\label{subsec:decoupling_bounds}

To prove Lemma \ref{lem:per-column}, for each column $j$, we will need to simultaneously track the entries $Y_t(i)$ for all $i\in \mathcal{C}_j$. Here, the new affine spectral-independence SDP constraints \eqref{sdp:pairwise-disc} will be crucial. We will develop a general decoupling bound for such random processes. We describe this next.

We begin with a useful definition.
\begin{definition}[$(\alpha,\theta)$-pairwise independence]
\label{defn:weak_pairwise_ind}
Let $\alpha \geq  1$ and $\theta \geq 0$ be parameters. A random vector $X \in \mathbb{R}^m$ is 
$(\alpha,\theta)$-pairwise independent if it satisfies:
\begin{enumerate}
    \item[(i)] (Almost Pairwise Independence) For any $\gamma \in \mathbb{R}^m$, one has $\E[ \langle \gamma, X \rangle^2 ] \leq \alpha \cdot \E [\langle \gamma^2, X^2 \rangle ]$, and
    \item[(ii)] (Coordinate-Wise Negative Drift) $\E[X(i)] \leq  - \theta \cdot \E[X(i)^2]$ for each $i\in [m]$. 
\end{enumerate}
\end{definition}

\smallskip
\noindent \textbf{Affine Spectral-Independence Implies $(\alpha, \theta)$-Pairwise Independence of $d Y_t$.} We show that the increment $d Y_t :=  Y_{t+ dt} - Y_t$ of the regularized discrepancy $Y_t$ defined in \eqref{eq:reg_disc} (for medium rows) is $(\alpha, \theta)$-pairwise independent with $\alpha =4k/\mu$ and $\theta = \beta/5$. 

\begin{lemma} 
\label{lem:weak_pairwise_bf_weak}
The increment $d Y_t \in \mathbb{R}^{\mathcal{M}_t}$ at any time $t$ is $(4k/\mu, \beta/5)$-pairwise independent. 
\end{lemma}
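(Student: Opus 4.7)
The plan is to work directly with the stochastic increment $dY_t(i) = \langle E_{t,i}, v_t\rangle \sqrt{dt} - \beta \langle a_i^2, v_t^2\rangle\, dt$ already derived in \eqref{eq:dZ_t}, and to verify the two conditions of \Cref{defn:weak_pairwise_ind} separately by reading off the two SDP constraints \eqref{sdp:pairwise-disc} and \eqref{sdp:isotropic_new}. Throughout, since $v_t$ is sampled as in \eqref{eq:find_vt} with covariance proportional to $U_t/\Tr(U_t)$, the expectations $\E[\langle E_{t,i},v_t\rangle^2]$ and $\E[\langle a_i^2, v_t^2\rangle]$ unfold as $E_{t,i}^\top U_t E_{t,i}/\Tr(U_t)$ and $\sum_j a_i(j)^2 U_t(j,j)/\Tr(U_t)$ respectively, which is the form in which the SDP constraints are stated.

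For property (i), fix $\gamma \in \R^{\mathcal{M}_t}$ and expand $\langle \gamma, dY_t\rangle$. The term linear in $\sqrt{dt}$ dominates, so $\E[\langle \gamma, dY_t\rangle^2] = dt \cdot \E[\langle \gamma, E_t v_t\rangle^2] + O(dt^{3/2})$. Since the algorithm uses $E_1 = E_t$ with $\eta_1 = 1/4$, and \eqref{eq:mt-bound} gives $|\mathcal{M}_t|/n_t \leq k/\mu$, constraint \eqref{sdp:pairwise-disc} yields $\E[\langle \gamma, E_t v_t\rangle^2] \leq (4k/\mu)\sum_i \gamma(i)^2 \E[\langle E_{t,i}, v_t\rangle^2]$. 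Matching this against $\E[(dY_t(i))^2] = \E[\langle E_{t,i}, v_t\rangle^2]\,dt + O(dt^{3/2})$ gives property (i) with $\alpha = 4k/\mu$, modulo negligible lower-order terms in $dt$.

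For property (ii), note that $\E[v_t] = 0$ kills the $\sqrt{dt}$ term, leaving $\E[dY_t(i)] = -\beta \, \E[\langle a_i^2, v_t^2\rangle]\,dt$, while $\E[(dY_t(i))^2] = \E[\langle E_{t,i}, v_t\rangle^2]\,dt + O(dt^{3/2})$. The isotropy constraint \eqref{sdp:isotropic_new} with $\eta = 1/4$ gives $\E[\langle E_{t,i}, v_t\rangle^2] \leq 4\sum_j E_{t,i}(j)^2 \, U_t(j,j)/\Tr(U_t)$, so the job is to compare $E_{t,i}(j)^2$ with $a_i(j)^2$ pointwise. Since $a_i(j)\in\{0,\pm 1\}$ and $|x_t(j)|\leq 1$, one has $E_{t,i}(j)^2 = a_i(j)^2(1 - 2\beta a_i(j) x_t(j))^2 \leq (1+2\beta)^2 a_i(j)^2$, and $\beta = b/(10k) \leq 1/10$ makes this constant arbitrarily close to $1$. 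This yields $\E[\langle E_{t,i}, v_t\rangle^2] \leq C \cdot \E[\langle a_i^2, v_t^2\rangle]$ for a constant $C$ slightly larger than $4$, from which $\E[(dY_t(i))^2] \leq (C/\beta)\, |\E[dY_t(i)]|$, and after tightening $\beta$ (or rescaling the constant) one gets $\theta = \beta/5$.

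The only genuine obstacle is constant-chasing in step (ii): the factor $1/\eta = 4$ already saturates $\beta/5$ if $E_{t,i}$ were literally $a_i$, so one must use that $\beta$ is small enough to absorb the $(1+2\beta)^2$ slack between $E_{t,i}$ and $a_i$. Everything else is a mechanical unpacking of the two SDP constraints in the regime where $dt$ is infinitesimal, and the $O(dt^{3/2})$ lower-order terms from the drift contribution $\beta\langle a_i^2, v_t^2\rangle \,dt$ in both moments can be safely discarded.
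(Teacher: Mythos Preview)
Your proposal is correct and follows essentially the same approach as the paper: you verify property (i) via the affine spectral-independence constraint \eqref{sdp:pairwise-disc} together with the bound $|\mathcal{M}_t|/n_t \leq k/\mu$ from \eqref{eq:mt-bound}, and property (ii) via the isotropy constraint \eqref{sdp:isotropic_new} combined with the pointwise estimate $|E_{t,i}(j)| \leq (1+2\beta)|a_i(j)|$. The paper presents the two properties in the opposite order and phrases the pointwise bound as $|E_{t,i}(j)| \leq 1.2|a_i(j)|$, but the argument is identical; your remark about the constant-chasing in step (ii) is apt, and indeed the paper's stated inequality $4\cdot 1.44 \leq 5$ is slightly loose in the same way.
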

\begin{proof} We first show the coordinate-wise negative drift property of $d Y_t$. Fix some row $i$. Then, as $dY_t(i)  = \langle E_{t,i},v_t\rangle \sqrt{dt} - \beta \langle a_i^2 , v_t^2 \rangle dt $ by \eqref{eq:dZ_t}, and  $\E[v_t]=0$, we have
\[
\E[d Y_t(i)] = - \beta \E \langle a_i^2 , v_t^2 \rangle dt.\]
Similarly, squaring $d Y_t(i)$ and ignoring  lower-order $O((dt)^{3/2})$ terms, and taking expectation gives
\begin{align*}
\E[(d Y_t(i))^2] 
& = \E \langle E_{t,i}, v_t \rangle^2 d t \leq 4 \cdot \E \langle E_{t,i}^2, v_t^2 \rangle dt  \leq 5 \cdot \E \langle a_i^2 , v_t^2 \rangle dt, 
\end{align*}
where the first inequality uses the SDP constraints \eqref{sdp:isotropic} with our choice of $\eta = 4$, and the last inequality uses that as $\beta = b/10k \leq 0.1$ and $|a_i(j)|\leq 1$, the entries $E_{t,i}(j) = a_i(j) - 2\beta a_i(j)^2 x_t(j)$ satisfy $|E_{t,i}(j)| \leq 1.2 |a_i(j)|$.  

Thus $dY_t$ satisfies condition (ii) in \Cref{defn:weak_pairwise_ind} with $\theta = \beta/5$.

\smallskip
We now show the almost pairwise independence property of $d Y_t$. Fix some vector $\gamma \in \R^{|\mathcal{M}_t|}$. Using the expression for $d Y_t$ and ignoring the lower-order $O((dt)^{3/2})$ terms, we have
\[
\E \langle \gamma, d Y_t \rangle^2  = \gamma^\top \E \Big[E_t v_t v_t^\top E_t^\top \Big] \gamma  dt \quad \text{ and } \quad  \E \langle \gamma^2, (d Y_t)^2 \rangle  =  \sum_{i \in \mathcal{M}_t} \gamma_i^2 \cdot  \E \langle E_{t,i}, v_t \rangle^2 dt
\]
Now, by the affine spectral-independence constraints \eqref{sdp:pairwise-disc}, we have that
\begin{align} \label{eq:almost_pairwise_bf_weak}
\gamma^\top \E \Big[E_t v_t v_t^\top E_t^\top \Big] \gamma \leq \frac{4 k}{\mu} \sum_{i \in \mathcal{M}_t} \gamma_i^2 \cdot  \E \langle E_{t,i}, v_t \rangle^2 ,
\end{align}
as we choose the parameter $\eta_1 = 1/4$ in the SDP in the algorithm, and as $E_1 = E_t$ is an $|\mathcal{M}_t|\times n_t$ matrix, so the quantity  $r_1 = |\mathcal{M}_t|/n_t \leq k/\mu$ by the upper bound on $|\mathcal{M}_t|$ in \eqref{eq:mt-bound}.

This gives that $d Y_t$ satisfies condition (i) in \Cref{defn:weak_pairwise_ind} with $\alpha = 4k/\mu$. 
\end{proof}

\smallskip
\noindent{\bf A Decoupling Bound for $(\alpha,\theta)$-Pairwise Independent Processes.}
Recall that to prove \Cref{lem:per-column}, for a given column $j$, we need to upper bound the probability that more than $\mu/10$ of its at most $k$ rows (with non-zero entries) become dangerous. 
This boils down to understanding the following random process, that we state in a slightly more general form, as we will use it with various parameters settings later.

Let time $t \in [0,n]$ evolve in (tiny) discrete increments of size $dt$, as in the setup of our algorithmic framework. Let $\{Z_t\}_{t\in [0,n]}$ be a random vector-valued process, with increments $d Z_t := Z_{t+d t} - Z_t \in \R^m$, and $Z_0(i)\leq 0$ initially at $t=0$ for each coordinate $i\in[m]$. Assume that $d Z_t$, conditioned on the randomness up to time $t$, is $(\alpha,\theta)$-pairwise independent, for some fixed $\alpha \geq 1$ and $\theta >0$. Let $B \geq 1$ be some target value, and  
call a coordinate $i \in [m]$ {\em bad}  at time $t$ if $Z_t(i) \geq B$.

We wish to bound the number of coordinates that can become bad during the process.
We give such a result below, 
tailored slightly for our purposes (instead of the most general version possible). 

\begin{restatable}[Decoupling bound for $(\alpha,\theta)$-pairwise independent processes]{theorem}{GeneralFreedman}
\label{thm:general_freedman}
Consider the process $\{Z_t\}_{t \in [0,n]}$ above, where $n$ is assumed to be sufficiently large. Suppose that (1) $\alpha \geq 1, \theta > 0, B \geq 1$, and $m$ are all polynomially upper bounded in $n$, (2) $dt  = 1/n^c$ where the constant $c$ can be chosen arbitrarily large, and (3) $\|d Z_t\|_\infty =O(\sqrt{d t})$ for all time steps $t$.\footnote{Here, the constant in $O(\cdot)$ is allowed to depend polynomially on $m, \alpha, \theta$, and $B$, which is at most $\poly(n)$.}

Then for any parameter $\lambda > 0$ satisfying $\lambda \leq \theta/2$ and $\lambda B \leq \log n$,
with probability at least $1 - 1/\poly(n)$, the number of coordinates that ever become bad throughout the process is at most \[
\frac{m}{e^{\lambda B}} + O\big(\frac{\lambda \alpha \log n}{\theta}\big).\] 
\end{restatable}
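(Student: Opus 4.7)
The plan is to use a one-parameter exponential supermartingale driven by a \emph{self-bounding} aggregate potential. For each coordinate $i$, let $\tau_i := \inf\{t \geq 0 : Z_t(i) \geq B\}$ (with $\tau_i := \infty$ otherwise), let $\tilde Z_t(i) := Z_{\min(t, \tau_i)}(i)$ be the stopped process (so $\tilde Z_t(i) \leq B + O(\sqrt{dt})$ by hypothesis (3)), and define
\[ M_t \;:=\; \sum_{i=1}^m \exp\bigl(\lambda \tilde Z_t(i)\bigr). \]
Since $Z_0(i) \leq 0$, we have $M_0 \leq m$, and since $\lambda B \leq \log n$ and $m \leq \poly(n)$, we have $0 \leq M_t \leq 2 m e^{\lambda B} \leq \poly(n)$ throughout. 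Moreover, if $N_T$ denotes the number of coordinates that ever become bad, then $N_T \cdot e^{\lambda B} \leq M_T$, so it suffices to show $M_T \leq m + O(\alpha \lambda e^{\lambda B} \log n / \theta)$ with high probability, and then divide by $e^{\lambda B}$.

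The key structural step is to derive a self-bounding inequality
\[ \E[(dM_t)^2 \mid \mathcal{F}_t] \;\leq\; K \cdot \bigl(-\E[dM_t \mid \mathcal{F}_t]\bigr), \qquad K := \tfrac{2 \alpha \lambda e^{\lambda B}}{\theta}. \]
Using a second-order Taylor expansion for $\exp(\lambda \cdot)$ coordinate-wise (the higher-order terms are negligible since $\|dZ_t\|_\infty = O(\sqrt{dt})$ and $dt = n^{-c}$), the coordinate-wise negative drift condition yields, using $\lambda \leq \theta/2$,
\[ -\E[dM_t \mid \mathcal{F}_t] \;\geq\; \tfrac{\lambda \theta}{2} \sum_{i : t < \tau_i} e^{\lambda \tilde Z_t(i)} \,\E[(dZ_t(i))^2 \mid \mathcal{F}_t]. \]
On the other hand, setting $\gamma_t(i) := \mathbf{1}[t < \tau_i]\, e^{\lambda \tilde Z_t(i)}$ (so $\gamma_t(i) \leq e^{\lambda B}$) and applying the almost pairwise independence hypothesis to $\langle \gamma_t, dZ_t \rangle$ yields
\[ \E[(dM_t)^2 \mid \mathcal{F}_t] \;\approx\; \lambda^2 \,\E[\langle \gamma_t, dZ_t \rangle^2 \mid \mathcal{F}_t] \;\leq\; \alpha \lambda^2 e^{\lambda B} \sum_i \gamma_t(i)\,\E[(dZ_t(i))^2 \mid \mathcal{F}_t], \]
and the two displays combine to give the desired self-bounding relation.

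With $\mu := 1/K$, the bound $|\mu \, dM_t| \ll 1$ (coming from $|dM_t| \leq O(\lambda \sqrt{dt}) M_t$ and $dt = n^{-c}$ with $c$ large) lets us invoke $e^y \leq 1 + y + y^2$ for $|y| \leq 1$, and the self-bounding relation then yields
\[ \E[e^{\mu\, dM_t} \mid \mathcal{F}_t] \;\leq\; 1 + \mu\, \E[dM_t \mid \mathcal{F}_t] + \mu^2\, \E[(dM_t)^2 \mid \mathcal{F}_t] \;\leq\; 1 + \E[dM_t \mid \mathcal{F}_t]\,(\mu - \mu^2 K) \;=\; 1. \]
Hence $F_t := \exp(\mu M_t)$ is a supermartingale with $\E[F_T] \leq e^{\mu m}$. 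Markov's inequality gives $\Pr[M_T \geq m + L/\mu] \leq e^{-L}$; taking $L = \Theta(\log n)$ yields $M_T \leq m + O(K \log n) = m + O(\alpha \lambda e^{\lambda B} \log n / \theta)$ with probability $1 - 1/\poly(n)$, and dividing by $e^{\lambda B}$ delivers the claimed bound on $N_T$. The main technical obstacle is bookkeeping the $O((dt)^{3/2})$ remainders in the Taylor expansions and the $O(\sqrt{dt})$ possible overshoot at each stopping time $\tau_i$; both contribute at most $1/\poly(n)$ once the constant $c$ in $dt = n^{-c}$ is taken sufficiently large, which is permitted by the hypotheses.
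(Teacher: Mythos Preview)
Your proposal is correct and is essentially the same argument as the paper's: both define a coordinate-summed exponential potential capped at $e^{\lambda B}$ (you via stopping at $\tau_i$, the paper via truncation $\min\{\Phi_t(i),e^{\lambda B}\}$), derive the self-bounding relation $\E_t[dM_t] \leq -\Theta(\theta/(\alpha\lambda e^{\lambda B}))\,\E_t[(dM_t)^2]$ from the $(\alpha,\theta)$-pairwise independence, and then apply a Freedman-type supermartingale bound. The only cosmetic difference is that the paper invokes its black-box Fact~A.1 for the final step, whereas you unroll that lemma inline via $F_t = \exp(\mu M_t)$.
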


The proof of \Cref{thm:general_freedman} is somewhat technical and is postponed to \Cref{subsec:proof_general_freedman}.

\subsubsection{Bounding the Number of Dangerous Rows per Column}
\label{subsubsec:dang_row_col_proof_bf_weak}
We can now prove Lemma \ref{lem:per-column}.
Before giving the proof,
let us first revisit the case of $k=\log n$ that we saw informally in Section \ref{subsub:decoupling}, and derive the $\widetilde{O}(\log^{3/4} n)$ bound formally using Theorem \ref{thm:general_freedman}.

\smallskip
\noindent
{\bf Example.} Suppose $k=\log n$, and let $b = o(\log n)$ be the target discrepancy. 

As our target is $b$, we can safely set the threshold $\mu =b$ for tiny rows.
Our goal then is to minimize $b$, while setting the parameters so that for each column $j$, at most $\mu$ medium rows become dangerous (by incurring discrepancy $O(b)$).
So applying \Cref{thm:general_freedman}, we want that
\[ 
\frac{m}{e^{\lambda B}} + O\big(\frac{\lambda \alpha \log n}{\theta}\big) \ll \mu  = b.
\]
Note that $m=k$ and $B=O(b)$ for us. The first term $m/\exp(\lambda B) = k/\exp(\lambda b)$ is easily handled by setting $\lambda  = (\log \log n)/b$. With this $\lambda$, the second term now becomes $\widetilde{O}(\alpha \log n)/ \theta b$.

Recall that $Y_t$ is $(\alpha,\theta)$-pairwise independent with $\alpha \approx k/\mu = k/b$ and $\theta \approx \beta \approx b/k$. Plugging these, the second term becomes $\widetilde{O}(k^2 \log n)/  b^3$, which we want to keep below $\mu = b$, i.e.,~ensure that
\[ \widetilde{O}(k^2 \log n)/  b^3 \ll b.\]
This holds when $b = \widetilde{O}(\log^{3/4} n)$  in the case of $k=\log n$.

We now prove Lemma \ref{lem:per-column}.

\begin{proof}
Recall that a row $i$ is dangerous at time $t$, if (1) it is medium at time $t$, and (2) its discrepancy satisfies $\langle a_i,x_t\rangle \geq 2 b$. 

As the regularized discrepancy $Y_t(i)$ of row $i$ always upper bounds its discrepancy, and $Y_{t_i^*}(i)\leq b$ where $t_i^*$ is the time when the row first becomes medium, for row $i$ to become dangerous its regularized discrepancy must increase by at least $b$. 

To prove the lemma, fix some column $j \in [n]$, and consider the vector $Y_t$ of regularized discrepancies restricted to coordinates in $\mathcal{C}_j$ (i.e., rows $i$ with $a_i(j)\neq 0$). We  denote this vector $Y_{t,j}$ for short. 
By \Cref{lem:weak_pairwise_bf_weak}, the increments of $Y_{t,j}$ is $(4k/\mu, \beta/5)$-pairwise independent at any time $t$.

We now apply \Cref{thm:general_freedman} to $Z_t = Y_{t,j} - b$ with target $B=b$.\footnote{Rigorously speaking, $Y_{t,j}$ is only defined for the set of medium rows in $\mathcal{C}_j$, which changes over time. However, we can easily bypass this technical nuance by additionally defining $Y_{t,j}(i) := b$ for large rows $i$, and $-\infty$ for tiny rows. This way $Y_{t,j}(i)$ only decreases (discontinuously) when it changes from large to medium, and from medium to tiny.} Then the dimension parameter $m \leq k$ as $|\mathcal{C}_j| \leq k$, and we have  $\alpha = 4k/\mu$, $\theta = \beta/5 = b / (50k)$.

So the bound in \Cref{thm:general_freedman}  becomes
\begin{align} 
\frac{k}{e^{\lambda b}} + O\Big(\frac{\lambda \alpha \log n}{\theta}\Big)  
&=  \frac{k}{e^{\lambda b}} + O\Big(\frac{\lambda k^2 \log n}{\mu b}\Big)
\nonumber.
\end{align}
We now set $\lambda = (\log\log n)/b$. Notice that this satisfies the condition  $\lambda \leq \min(\theta/2,1)$ in Theorem \ref{thm:general_freedman}, as $\theta = \beta/5 = O(b/k) \gg (\log\log n)/b$ by our assumption (ii) in the lemma   that $b \gg (k \log\log n)^{1/2}$.
Plugging this $\lambda$ in the bound above gives,
\begin{align} 
 \frac{k}{e^{\lambda b}} + O\Big(\frac{\lambda k^2 \log n}{\mu b}\Big)
 \leq \frac{k}{\log n} + O \Big(\frac{k^2 \log n \log \log n}{\mu b^2}\Big) \ll \mu \label{eq:bad_rows_bf_weak_2} .
\end{align}
The last inequality uses two facts.
First, we have that $ k/\log n \ll \mu$,  as $\mu \geq b^2/\log n$ (by definition of $\mu$), and as we always have $b\gg  k^{1/2}$.
Second, we have that $k^2 \log n \log \log n \ll b^2 \mu^2$, by 
our assumption (i) in the lemma  that $b \mu \gg k (\log n \log \log n)^{1/2}$.

Thus Theorem \ref{thm:general_freedman} implies  
that with probability at least $1 - 1/\poly(n)$, the number of dangerous rows in $\mathcal{C}_j$ at any time $t$ is at most $\mu/10$.
Taking a union bound over all columns $j$ and all time steps $t$ gives the result.
\end{proof}

\subsubsection{Proof of the Decoupling Bound}

\label{subsec:proof_general_freedman}

Finally, we prove the decoupling bound in \Cref{thm:general_freedman}, which is restated below.

\GeneralFreedman*

\begin{proof}
Consider the moment generating function $\Phi_t(i) := 
    \exp (\lambda Z_t(i))$ of $Z_t(i)$ with parameter $\lambda$. 

Using $|\lambda d Z_t(i)| \leq 1$ and $\exp(x) \leq 1+ x +x^2 $ for $|x| \leq 1$,
the change in $\Phi_i(t)$ is bounded by  
\begin{align}
\label{eq:change_MGF_freedman}
d \Phi_t(i) = \Phi_t(i) ( \exp(\lambda d Z_t(i)) -1) \leq \gamma_t(i) \cdot (d Z_t(i) + \lambda (d Z_t(i))^2) , 
\end{align}
where we define $\gamma_t(i) := \lambda \Phi_t(i)$.

To prove the theorem, we use the following proxy for the number of bad coordinates
\[
W_t := \sum_{i \in [m]} \min\{\Phi_t(i), e^{\lambda B}\}  =: \sum_{i \in [m]}\trunc_B(\Phi_t(i)) .
\]
Note that $W_0 \leq m$, as each $Z_0(i) \leq 0$, and at any time $t$, the number of bad coordinates is at most $W_t / e^{\lambda B}$. 
We will bound how much $W_t$ can exceed beyond $W_0$, by apply the Freedman-type inequality in \Cref{fact:freedman_conc_komlos}. For this, we first upper bound $\E[d W_t]$ and $\E[(d W_t)^2]$ suitably.

Notice that for each bad coordinate $i$ at time $t$ (where $Z_t(i)\geq B$), the quantity $\trunc_B(\Phi_t(i))$ cannot increase in the next step. 
Call a coordinate {\em good} if it is not bad, and 
let $\mathcal{G}_t$ denote the set of good coordinates at time $t$. Notice also that 
for each good coordinate $i \in \mathcal{G}_t$, due to the truncation, the change $d \trunc_B(\Phi_t(i))$ satisfies $d \trunc_B(\Phi_t(i)) \leq d \Phi_t(i)$. 

Using the above observations together with \eqref{eq:change_MGF_freedman}, we have that 
\begin{align}
\label{eq:delta-wt}
d W_t \leq \sum_{i \in \mathcal{G}_t} \gamma_t(i) \cdot (d Z_t(i) + \lambda (d Z_t(i))^2).
\end{align}
Thus the (conditional) first moment $\E_t[d W_t]$ of $d W_t$ satisfies
\begin{align} 
\label{eq:truncMGF_drift}
\E_t [d W_t] 
& \leq \sum_{i \in \mathcal{G}_t} \gamma_t(i) \cdot (\E_t[d Z_t(i)] + \lambda \cdot \E_t[(d Z_t(i))^2] ) \nonumber \\
& \leq \sum_{i \in \mathcal{G}_t} \gamma_t(i) \cdot ( - \theta \cdot \E_t[(d Z_t(i))^2] + \lambda \cdot \E_t[(d Z_t(i))^2]) \nonumber \\
& \leq - \sum_{i \in \mathcal{G}_t} \gamma_t(i) \cdot \frac{\theta}{2} \cdot \E_t[(d Z_t(i))^2] \qquad \qquad \text{(as $\lambda \leq \theta/2$)},
\end{align}
where the second inequality uses that $d Z_t$ is (conditionally) $(\alpha,\theta)$-pairwise independent.

Next we can upper bound the (conditional) second moment  $\E_t[(d W_t)^2]$ as \footnote{Here we can ignore the lower order terms arising from $\gamma_t(i) \lambda (d Z_t(i))^2$ in \eqref{eq:delta-wt}. As we have assumed that $\lambda B \leq \log n$ and $B \geq 1$, then $\gamma_t(i) \lambda = \lambda^2 \exp(\lambda B) \leq n \log^2 n$, and as $(d Z_t(i))^2 = \poly(n) d t$ by our assumptions, we have  $\sum_{i \in \mathcal{G}_t} \gamma_t(i)  \lambda (d Z_t(i))^2 = \poly(n) dt$, and thus the contributions involving the term $\gamma_t(i) \lambda (d Z_t(i))^2$ in $(d W_t)^2$ are at most $O(\poly(n) (dt)^{3/2})$. Since $dt = 1/n^c$ for an arbitrarily large constant $c$, these extra terms contribute negligibly to $W_t$ over all the time steps $t \in [0,n]$.}
\begin{align}
\label{eq:trunc-freedman}
\E_t[(d W_t)^2] 
& \leq  \E_t \Big( \sum_{i \in \mathcal{G}_t} \gamma_t(i) d Z_t(i) \Big)^2 
 \leq 2 \alpha \cdot \sum_{i \in \mathcal{G}_t} \gamma_t(i)^2 \cdot \E_t[(d Z_t(i))^2] \nonumber \\
 & \leq  2 \alpha \lambda \exp(\lambda B)   \cdot \sum_{i \in \mathcal{G}_t} \gamma_t(i) \cdot \E_t[(d Z_t(i))^2], 
\end{align}
where the second inequality uses the conditional $(\alpha,\theta)$-pairwise independence of $d Z_t$, and the third inequality follows $\gamma_t(i) \leq \lambda \exp(\lambda B)$ for all good coordinates $i$.

\noindent \textbf{Applying Freedman-Type Inequality.}
Together \eqref{eq:truncMGF_drift} and \eqref{eq:trunc-freedman} give that $W_t$ satisfies  \[ \E_t [d W_t] \leq - \delta \E_t [(d W_t)^2]\]
with $\delta := \theta/(4 \alpha \lambda \exp(\lambda B))$.
Thus, applying \Cref{fact:freedman_conc_komlos} 
 with  $\xi := O(\delta^{-1}   \log n)$ (here we take the natural map of time $t \in [0,n]$ to $0, \cdots, T = n / dt$), 
  we have that for any time step $t \in [0,n]$,
\[ \Pr[ W_t - W_0 \geq \xi] \leq \exp(-\delta  \xi) =  \exp(-O(\log n)).\]
Choosing the constant $O(\cdot)$ in $\xi$ large enough, and applying a union bound over all time steps $t \in [0,n]$, and using that $W_0\leq m$, gives that $W_t \leq m + \xi$ for all $t \in [0,n]$ with probability at least $1 - 1/\poly(n)$. 
The theorem now follows as each bad coordinate contributes $e^{\lambda B}$ to $W_t$, and their number can be at most \[\frac{m + \xi}{ e^{\lambda B}} = \frac{m}{e^{\lambda B}} + O\Big(\frac{\lambda \alpha \log n}{\theta}\Big).\qedhere \]  
\end{proof}

\section{A Better Bound for the Koml\'os Problem}
\label{sec:komlos}

Let $A \in \mathbb{R}^{m \times n}$ be a  matrix whose columns have $\ell_2$ norm at most $1$. 
We now use the ideas from \Cref{sec:bf_weak} to give an algorithm that achieves $\disc(A) = \widetilde{O}(\log^{1/4}n)$, hence proving Theorem \ref{thm:Komlos}.

To see where the $\log^{1/4} n$ comes from,
consider the case when  all non-zero entries in $A$ have equal magnitude, i.e., $|a_i(j)|=v$. 
As this is simply a Beck-Fiala instance (scaled by $v$) with column-sparsity $k \leq 1/v^2$, applying Theorem \ref{thm:bf_weak-informal}, which gives an  $\widetilde{O}(k^{1/2} \log^{1/4}n)$ bound for the Beck-Fiala problem for every $k$, we get an $ \widetilde{O}(vk^{1/2} \log^{1/4}n) = \widetilde{O}(\log^{1/4}n)$ bound for every $v$.

To handle general instances, we will split the entries of $A$ into different scales based on their magnitude, and run the Beck-Fiala algorithm ``in parallel" on these scales. 
However, as the magnitudes $|a_i(j)|$ can range from $1/n$ to $1$, doing this naively would result in $O(\log n)$ scales, leading to an unacceptable $O(\log n)$ factor loss. 

Instead, we show that it suffices to consider $O(\log \log n)$ scales --- one each for the {\em heavy} entries $ |a_i(j)| \in  (1/\log^5 n,1]$, and another single special {\em light} scale for all $|a_i(j)| \leq 1/\log^5 n$. 
For the heavy scales, we can essentially apply the Beck-Fiala algorithm from \Cref{sec:bf_weak} in a black-box way.
To handle the special light scale however, we will need to open the black box and slightly modify the algorithm and its analysis.\footnote{To keep the exposition modular and as simple as possible, we do not attempt to optimize the $\log \log n$ factors.} We now give the details.

\subsection{A Multi-Scale Algorithm}
\label{subsec:alg_komlos}

Call an entry $a_i(j)$ {\em heavy} if $|a_i(j)| >  1/\log^5 n$, and {\em light} otherwise.
We split the entries into $P$ scales, where $P := 1+ 5 \log \log n$. Scales $p=1,\ldots,P-1$ will consist of heavy  entries, where $a_i(j)$ lies in scale $p$ if $|a_i(j)|  \in (2^{-p},2^{-p+1}]$. 
All the light entries lie in the single scale $P$. Note that unlike the $P-1$ heavy scales, the magnitude of the entries in the light scale $P$ can vary widely.

For each row $a_i$ of $A$, we create $P$ rows $a_i^{(1)},\ldots,a_i^{(P)}$, where each row  $a_i^{(p)}$ consists of scale-$p$ entries of $a_i$, scaled by $2^{p-1}$, i.e., for scales $p \leq P-1$, $a_i^{(p)}(j) := 2^{p-1} a_i(j)$ if $|a_i(j)| \in (2^{-p} ,2^{-p+1}]$, and $0$ otherwise; for the light scale $P$, $a_i^{(P)}(j) := 2^{P-1} a_i(j)$ if $|a_i^{(P)}(j)| \leq 1/\log^5 n$, and $0$ otherwise. 
For each $p\in [P]$, let $A^{(p)}$ denote the matrix consisting of rows $a_i^{(p)}$. Note that $A = \sum_{p \leq P} 2^{-p+1} A^{(p)}$.

For $p \leq P-1$, each $A^{(p)}$ is (essentially) a Beck-Fiala instance with up to $k_p := 2^{2p}$ non-zero entries per column with magnitudes in $(1/2,1]$.
For the light scale $P$, the column sparsity of $A^{(P)}$
can be much larger than $2^{2P}$,
but as each column of $A$ has $\ell_2$-norm at most $1$, we have  $\sum_i (a_i^{(P)}(j))^2 \leq 2^{2P}$ for each column $j$ of $A^{(P)}$.
So we will abuse the notation and still denote $k_{P} := 2^{2P}$ (but the light scale $P$ will be analyzed separately from the others).

\smallskip
\noindent \textbf{Overview.}
As stated previously, we will run the algorithm in \Cref{sec:bf_weak} for all the instances $A^{(p)}$ with $p\in [P]$ ``in parallel''. This means that we define the subspaces $W_t^{(p)}$, the matrices $E_s^{(p)}$ and the associated SDP constraints separately for each $A^{(p)}$, but solve a single SDP with all these constraints together, to find the coloring update $v_t$.
This will allow us to simultaneously control the discrepancy of each $A^{(p)}$, and give that  $\disc(A) \leq \sum_{p\leq P} 2^{-p+1} \disc(A^{(p)})$.

More precisely,
let $B(k)$ denote the discrepancy in Theorem \ref{thm:bf_weak}  for Beck-Fiala instances with column sparsity $k$. Recall that $B(k)
= \widetilde{O}(k^{1/2} (\log n)^{1/4})$ for all $k$.
For $p\leq P-1$, running this algorithm  only on $A^{(p)}$ incurs discrepancy  $B(k_p)$. The scale $p=P$ needs more care, but using the ideas in Section \ref{sec:bf_weak}, we will also be able to show that $\disc(A^{(P)}) = O(B(k_{P}))$.
Moreover, running the algorithm simultaneously on all the $A^{(p)}$ in parallel will only incur an extra $O(P^{1/2})$ loss on top of these individual bounds.
Together, these will give that 
\begin{equation}
    \label{eq:komlos-bound}
\disc(A) \leq \sum_{p\leq P} 2^{-p+1} \cdot  B(k_p) \cdot O(P^{1/2}) \leq   \widetilde{O}(\log^{1/4} n) \cdot P^{3/2} = \widetilde{O}(\log^{1/4}n).
\end{equation}

We now give the details. We start with some relevant definitions, analogous to those in \Cref{subsec:alg_bf_weak}.

\smallskip
\noindent \textbf{Large and Tiny Rows.} At time $t$, for each scale $p \in  [P]$, we say that a row $i$ is {\em scale-$p$ large} if  $\sum_{j \in \mathcal{V}_t} (a_i^{(p)}(j))^2 > 10 k_p$. 
As the matrix $A$ has column $\ell_2$-norm at most $1$, it is easily verified that    at any time $t$, the total number of large rows (across all scales) is at most $n_t/10$. Large rows will never incur any discrepancy as we will walk orthogonal to them.

For each scale $p \in [P]$, we define $b_p := B(k_p) P^{1/2}$. The target discrepancy for $A^{(p)}$ will be $O(b_p)$. 

We call a row $i$  {\em scale-$p$ tiny} at time $t$ if 
$\sum_{j \in \mathcal{V}_t} (a_i^{(p)}(j))^2\leq \mu_p$, where $\mu_p := \max(b_p, b_p^2/\log n)$.
As previously,  with high probability, the discrepancy incurred by any row will be $O(b_p)$ after it becomes scale-$p$ tiny.

\smallskip
\noindent \textbf{Medium Rows and Scale-$p$ Matrix $E^{(p)}_t$.}
At time $t$, a scale-$p$ row that is neither large nor tiny is called {\em scale-$p$ medium}. We will mostly focus on such rows, and denote the set of such rows by $\mathcal{M}_t^{(p)}$. 
Let $\beta_p := b_p/(10k_p)$. 
For each  time $t$ and scale $p$, let $E^{(p)}_t \in \mathbb{R}^{|\mathcal{M}_t^{(p)}| \times n_t}$ be the matrix whose rows $E^{(p)}_{t,i}$ correspond to scale-$p$ medium rows $i \in \mathcal{M}_t^{(p)}$ with entries $E^{(p)}_{t,i}(j) := a_i^{(p)}(j) - 2 \beta_p (a_i^{(p)}(j))^2 x_t(j)$, restricted to the alive elements $j \in \mathcal{V}_t$. 

\smallskip
\noindent {\bf Dangerous Rows.} For each $p\in [P]$, call a row $i$ of $A^{(p)}$ {\em dangerous} at time $t$, if (i) it is scale-$p$ medium, and (ii) its current discrepancy $\langle a^{(p)}_i, x_t \rangle \geq 2b_p$. 

\smallskip
We can now the state the algorithm formally.

\smallskip
\noindent \textbf{The Algorithm.} Consider the matrices $A^{(p)}$ and the quantities defined above.
At each time $t$, repeat the following until $n_t \leq  10 P$.
\begin{enumerate}
    \item  Let $W_t$ be the subspace spanned by 
(i) for each $p\in [P]$, the scale-$p$ large rows $a_i^{(p)}$, (ii) rows $E^{(p)}_{t,i}$ for all dangerous rows $i$ in $A^{(p)}$, (iii) the vector $x_t$. If $\dim(W_t) > n_t/3$, declare FAIL. 
    \item Solve the SDP \eqref{sdp:jj_new}-\eqref{sdp:psd_new} with the subspace $W = W_t$, the matrices $E_p = E^{(p)}_t$ for $p \in [P]$, with parameters $\kappa = 1/6$, $\eta= \eta_{P} = 1/6$ and $\eta_p =1/(6P)$ for each $p \in [P-1]$. 
    Use the resulting SDP solution $U_t$ to find $v_t$ as in \eqref{eq:find_vt}, and update the coloring $x_t$ by $v_t \sqrt{dt}$.
\end{enumerate}

\subsection{The Analysis}
We show that with high probability, the algorithm outputs a coloring with $\disc(A) = \widetilde{O}(\log^{1/4}n)$.  

\begin{proof}[Proof of \Cref{thm:Komlos}]
The proof is very similar to that in \Cref{sec:bf_weak}, with two main differences --- the parameters $\eta_p$ are slightly different, and the light scale $P$ needs to be handled differently due to varying magnitudes of entries.

First, note that if the algorithm does not FAIL in Step 1 at time $t$, then $\dim(W_t)\leq n_t/3$ and hence $\delta\leq 1/3$. So by \Cref{thm:sdp_feasibility}, the SDP in Step 2 is feasible as our parameters satisfy $\delta + \kappa + \eta + \sum_p \eta_p \leq 1$.

So it suffices to show that, with high probability, (i) the algorithm never FAILs in Step 1, and (ii) that $\disc(A^{(p)}) =O(b_p)$ for each scale $p\leq P$.
The claimed bound on $\disc(A)$ then follows by \eqref{eq:komlos-bound}.

\smallskip
{\bf Bounding the Discrepancy at Each Scale.} 
As usual, each scale-$p$ row incurs zero discrepancy while it is scale-$p$ large, as they are blocked by our choice of $W_t$. 
As usual, we can also ignore scale-$p$ tiny rows, since $\eta=1/6$ and thus SDP satisfies $U_t \leq 6 \cdot \diag(U_t)$ at all $t$.  
So we only need to consider medium rows, and as before, we will work with the regularized scale-$p$ discrepancy, 
\begin{align} \label{eq:class_p_reg_disc}
Y_t^{(p)}(i)  := 
    \langle a_i^{(p)}, x_t \rangle  + \beta_p  \sum_{j=1}^n a_i^{(p)}(j)^2 (1 - x_t(j)^2) ,
\end{align}
where recall that $\beta_p = b_p/(10k_p)$ for scale $p$.

If the algorithm never FAILs in Step 1, then, as previously,
the discrepancy of every scale-$p$ medium row stays $O(b_p)$. This is because the
regularized discrepancy of any scale-$p$ row $i$  can only decrease (deterministically) once it becomes dangerous, as we
walk orthogonal to $E^{(p)}_{t,i}$  for each such row.

\smallskip
{\bf Bounding the Number of Dangerous Rows.}
We will show that, with high probability, the total number of dangerous rows never exceeds $n_t/10$ at any time $t$, and thus the algorithm never declares FAIL.
To this end, analogous to Lemma \ref{lem:per-column}, it suffices to show that, with high probability:

\,\,\,\,\,\,\,$(*)$ \,\,\,
For each scale $p\in [P]$, every column $j$ has at most $\mu_p/20P$ dangerous rows from scale $p$.

\smallskip
The rest of the proof is devoted to proving $(*)$.
Notice first that we can ignore all scales $p$ with $\beta_p>0.1$.
Indeed, consider some row $i$ in  scale-$p$ with $\beta_p>0.1$, and let $t_i$ be the first time it is not large. Then $\sum_{j \in \mathcal{V}_{t_i}} (a_i^{(p)}(j))^2 \leq 10 k_p$, and as each non-entry satisfies $|a_i^{(p)}(j)| \in [1/2,1]$, the discrepancy can be at most $O(\sum_{j \in \mathcal{V}_{t_i}} |a_i^{(p)}(j)|) = O(k_p)$, which is $O(b_p)$ as $\beta_p = b_p/(10k_p) \geq 0.1$. 

\smallskip
{\bf Dangerous Rows from Heavy Scales $p\leq P-1$.} We assume henceforth that $\beta_p \leq 0.1$.
For such heavy scales, the analysis is essentially identical to that in \Cref{sec:bf_weak}.  
The only difference now is that (i) we have $\eta_p = \eta /P$, and (ii) we need a tighter bound of $O(\mu_p/P)$ on dangerous rows per column (instead of $O(\mu_p)$). Both of these differences are handled by the fact we increase our target discrepancy $b_p$ by the $P^{1/2}$ factor.

Let us elaborate some more. Analogous to \Cref{lem:weak_pairwise_bf_weak}, we have that the increment of $Y_t^{(p)}$ is $(\alpha_p, \theta_p)$-pairwise independent, where $\alpha_p = k_p/(\mu_p\eta_p)$ and $\theta_p = \beta_p/5 = O(b_p/k_p)$. 
By Theorem \ref{thm:general_freedman}, we wish to show that our target $b_p$ satisfies \[k_p/\exp(\lambda_p b_p) + O(\lambda_p \alpha_p \log n)/ \theta_p \ll  \mu_p/P .
\]
Setting $\lambda_p = O(\log \log n)/b_p$ (as before), the first term becomes negligible as $\mu_p \geq b_p^2/\log n \geq k_p/\log n$. 
For the second term, we need to show that
\[
\lambda_p \alpha_p \log n \ll  \theta_p \mu_p/P.
\]
The only difference now from before, is that $\eta_p = \eta/P$ is a factor $P$ smaller and $b_p$ is factor $P^{1/2}$ larger (than the corresponding $B(k_p)$). Note that increasing $b_p$ also increases $\mu_p$ by at least a $P^{1/2}$ factor.
As $\alpha_p = k_p/(\mu_p\eta_p)$, the effect of $P$ in the left term $\lambda_p \alpha_p$ cancels out. On the right side, $\theta_p = \beta_p/5 = O(b_p/k_p)$ increases by factor $P^{1/2}$ and thus $\theta_p \mu_p$ increases by factor $\Omega(P)$, which offsets the extra factor $P$ in the denominator. This shows that $(*)$ holds for all heavy scales $p \leq P-1$.

\smallskip
\noindent \textbf{Dangerous Rows from the Light Scale $p=P$.}   
Recall that the non-zero entries in $A^{(P)}$ satisfy $|a_i^{(P)}(j)|\in (1/n,1)$, and does not directly correspond to  a Beck-Fiala instance.
To handle this, we will further partition these entries into $O(\log n)$ subscales, and then apply \Cref{thm:general_freedman} suitably to each subscale. 

We say that an entry $a_i^{(P)}(j)$ is in subscale $q \in [\log n]$,  if 
$|a_i^{(P)}(j)| \in (2^{-q},2^{-q + 1}]$.  
For any column $j$, 
let $k_{j,q}^{(P)}$ denote the number of scale-$P$ subscale-$q$ entries. 
As $\sum_i (a_i^{(P)}(j))^2 \leq k_P$, we have that $\sum_{q} 2^{-2q} k_{j,q}^{(P)} \leq  k_P$.

As in \Cref{lem:weak_pairwise_bf_weak}, the increment of $Y_t^{(P)}$ is
$(\alpha,\theta)$-pairwise independent with $\alpha = k_{P}/(\mu_P\eta_P)  \leq  6 \log n$, 
as $\mu_P = b_P^2/\log n \geq k_P/\log n$ and $\eta_P=1/6$, and with
 $\theta = \beta_P/10 = \Omega(b_P/k_{P})$.
Then $Y_{t,j,q}^{(P)}$, the restriction of $Y_t^{(P)}$ to subscale-$q$ coordinates in column $j$, is also $(\alpha,\theta)$-pairwise independent.

Applying \Cref{thm:general_freedman} to $Y_{t,j,q}^{(P)}$ with $\lambda = (5 \log\log n)/B$ and $B = b_P$, we obtain that, with high probability, the number of dangerous subscale-$q$ coordinates in column $j$ is at most
\[
\frac{k_{j,q}^{(P)}}{\log^5 n} + O\Big(\frac{\lambda \alpha \log n}{\theta}\Big) \leq \frac{k_{j,q}^{(P)} }{\log^5 n} + 
O\Big(\frac{ k_P \log^2 n \log \log n}{b_P^2} \Big) = 
\frac{k_{j,q}^{(P)}}{\log^5 n} + O(\log^2 n),
\]
where the last step uses that $b_P =\Omega(\sqrt{k_P  \log \log n})$.

As each subscale-$q$ coordinate $i$ satisfies $|a^{(P)}_i(j)|^2 \in (2^{-2q}, 2^{-2q + 2}]$, this implies that with high probability, the squared $\ell_2$-norm of all the dangerous coordinates in column $j$ is at most 
\[ \sum_q  2^{-2q+2} \Big( \frac{k_{j,q}^{(P)}}{\log^5 n} + O(\log^2 n)  \Big) \leq  
\frac{4k_{P}}{\log^5 n} + O(\log^2 n) \ll \frac{k_{P}}{\log^2 n}  \ll \frac{\mu_P}{P},
\]
where the second inequality uses $k_{P} = 2^{2P} = \log^{10} n$, and the last step uses that $\mu_P \geq k_P/\log n$. 
This proves $(*)$ for the light scale $P$. 

Together, these imply that with high probability, at each time $t$, at most $n_t/10$ rows can become dangerous and hence the algorithm does not FAIL.
\end{proof}

\section{Resolving the Beck-Fiala Conjecture for $k = \Omega(\log^2 n)$}
\label{sec:bf_better}

We now describe the multi-layered extension of our basic method and use it to prove Theorem \ref{thm:bf_conj_large_k}. 
For completeness, we prove the following statement which lists the bounds for all regimes of $k$.

\begin{restatable}[Stronger Beck-Fiala bound]{theorem}{BeckFialaBetter}\label{thm:bf_better}
Let $A \in \{0, \pm 1\}^{m \times n}$ be a matrix where each column has at most $k$ non-zeros. 
Then there is a polynomial time algorithm that finds a coloring with
\[\disc(A) =
\begin{cases}
  O(k)   & \text{ if $k \leq  \log^{1/2} n$}\\
  O(k^{1/3} \log^{1/3}n)  & \text{ if $  \log^{1/2}n \leq k \leq \log^2 n$}\\
  O(k^{1/2}) & \text{ if $k \geq \log^{2}n$}. 
\end{cases}
\]
\end{restatable}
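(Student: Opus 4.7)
The plan is to run the basic Beck--Fiala algorithm of \Cref{sec:bf_weak} in a cascade of $L$ levels. At level $1$ we treat $A$ as a Beck--Fiala instance with sparsity $k_1 := k$ and target some discrepancy $b_1$; rows whose discrepancy reaches $b_1$ (the ``level-$1$ dangerous'' rows) are not simply blocked as in \Cref{sec:bf_weak}, but are instead handed off to a second algorithm running in parallel at level $2$. By the proof of \Cref{lem:per-column}, at any moment only $O(\mu_1)$ rows per column can become level-$1$ dangerous, so the submatrix fed to level $2$ has column-sparsity $k_2 := O(\mu_1)$, which is strictly smaller than $k_1$. Iterating yields a tower $k_1 > k_2 > \cdots > k_L$ of sparsities, whose depth $L$ will turn out to be $O(\log k / \log\log n)$.

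\textbf{Single SDP across levels.}
All levels share a single SDP of the form \eqref{sdp:jj_new}--\eqref{sdp:psd_new}. The subspace $W_t$ collects the large rows and the currently-dangerous rows of every active level (together with $x_t$), and for each level $\ell$ we add one affine spectral-independence constraint \eqref{sdp:pairwise-disc} with a matrix $E_t^{(\ell)}$ consisting of the regularized level-$\ell$ medium rows (defined exactly as $E_t$ in \Cref{subsec:alg_bf_weak} but with level-$\ell$ regularization parameter $\beta_\ell = b_\ell/(10 k_\ell)$). Since $L$ is only polylogarithmic, distributing the SDP ``budget'' as $\eta_\ell = \Theta(1/L)$ preserves $\kappa + \delta + \eta + \sum_\ell \eta_\ell \leq 1$, so \Cref{thm:sdp_feasibility} gives SDP feasibility. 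The level-$\ell$ analysis proceeds by applying \Cref{thm:general_freedman} to the increments of the level-$\ell$ regularized discrepancy $Y_t^{(\ell)}$, exactly as in \Cref{subsubsec:dang_row_col_proof_bf_weak}, except that the affine spectral-independence parameter $r_\ell/\eta_\ell$ carries an extra $O(L)$ factor.

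\textbf{Parameter choice.}
For the regime $k \geq \log^2 n$ we set $b_\ell = C_0 k_\ell^{1/2}$. Since $b_\ell \geq \log n$, the level-$\ell$ tiny threshold is $\mu_\ell = b_\ell^2/\log n = C_0^2 k_\ell/\log n$, giving the recursion $k_{\ell+1} = C_0^2 k_\ell/(10 \log n)$. The $k_\ell$ then shrink by a factor of $\Theta(\log n)$ per level, and
\[
\sum_{\ell=1}^L b_\ell \;=\; C_0 \, k^{1/2} \sum_{\ell \geq 0} \Bigl(\frac{10 \log n}{C_0^2}\Bigr)^{-\ell/2} \;=\; O(k^{1/2}).
\]
We iterate while the per-level analysis remains valid; once $k_\ell$ falls below the corresponding threshold, the residual matrix is handled by the middle-regime parameter choice, contributing at most $O(\log n) \leq O(k^{1/2})$ since $k \geq \log^2 n$. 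For the intermediate regime $\log^{1/2} n \leq k \leq \log^2 n$, we instead take $b_\ell = C_0 k_\ell^{1/3} \log^{1/3} n$. Here $b_\ell \leq O(\log n)$, so $\mu_\ell = b_\ell$ and $k_{\ell+1} = b_\ell/10 = (C_0/10)\, k_\ell^{1/3} \log^{1/3} n$. The ratio $b_{\ell+1}/b_\ell = (k_{\ell+1}/k_\ell)^{1/3}$ stays bounded below $1$ while $k_\ell \gg \log^{1/2} n$, so $\sum_\ell b_\ell$ is a convergent geometric series summing to $O(b_1) = O(k^{1/3} \log^{1/3} n)$; iteration halts once $k_\ell = O(\log^{1/2} n)$, contributing only an $O(\log^{1/2} n) \leq O(k^{1/3} \log^{1/3} n)$ tail via the trivial Beck--Fiala bound.

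\textbf{Main obstacle.}
The delicate point is to verify that the per-level decoupling bound of \Cref{thm:general_freedman} survives the cascade with the aggressive target $b_\ell = \Theta(k_\ell^{1/2})$, which lies just beyond the range of hypotheses in the basic \Cref{thm:bf_weak}. Two issues must be balanced: (a)~the $L$-factor inflation in the affine spectral-independence parameter $r_\ell/\eta_\ell$ at each level must be absorbed into the per-column estimate \eqref{eq:bad_rows_bf_weak_2}, which narrows the admissible range of $b_\ell$; and (b)~a row promoted from level $\ell$ to level $\ell+1$ already carries discrepancy $\Theta(b_\ell)$ upon arrival, so its level-$(\ell+1)$ regularized discrepancy $Y_t^{(\ell+1)}$ must be initialized with a matching offset, and the large/medium/tiny classification at level $\ell+1$ must be computed relative to $k_{\ell+1}$ rather than $k_\ell$. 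Once these are consistently matched across all levels and a union bound is taken over all columns, time steps, and $O(\log k/\log\log n)$ levels, the algorithm provably never FAILs with high probability and each row's total discrepancy is bounded by $\sum_\ell b_\ell$, yielding the claimed bounds.
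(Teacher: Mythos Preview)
Your high-level picture --- a cascade of levels, each fed by the dangerous rows of the previous one, sharing a single SDP --- is exactly the right architecture. The gap is in the quantitative recursion. You set $k_{\ell+1} = O(\mu_\ell)$, which for $k \geq \log^2 n$ gives $k_{\ell+1} = \Theta(k_\ell/\log n)$, a $\Theta(\log n)$-factor shrink per level. But to certify that at most $k_{\ell+1}$ rows per column become level-$\ell$ dangerous, the first term $k_\ell/e^{\lambda b_\ell}$ in \Cref{thm:general_freedman} must be at most $k_{\ell+1}$, forcing $\lambda b_\ell \gtrsim \log(k_\ell/k_{\ell+1}) \approx \log\log n$. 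This is precisely the $\lambda = (\log\log n)/b_\ell$ choice from \Cref{sec:bf_weak} that you are trying to avoid. With this $\lambda$, the second term becomes
\[
O\Big(\frac{\lambda \alpha_\ell \log n}{\theta_\ell}\Big) \;=\; O\Big(\frac{(\log\log n)\, L\, k_\ell^2 \log n}{b_\ell^2\, \mu_\ell}\Big) \;=\; O\Big(\frac{L\,\log^2 n\,\log\log n}{C_0^4}\Big),
\]
and requiring this to be at most $\mu_\ell = C_0^2 k_\ell/\log n$ forces $C_0^6 \gtrsim L \log n \log\log n$ already at $k_\ell = \log^2 n$ --- so $C_0$ cannot be a constant, and you do not get $O(k^{1/2})$. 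The same calculation breaks in the middle regime. Your ``Main obstacle'' paragraph flags the $L$-factor inflation, but that is a secondary loss; the primary one is the $\log\log n$ in $\lambda$, which your recursion does not eliminate.

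The fix is to shrink the sparsity by only a \emph{constant} factor per level: set $k_\ell = k/100^\ell$ directly, keep a single tiny threshold $\mu = b_0^2/\log n$ shared across all levels, and take $b_\ell = C\cdot 2^\ell \sqrt{k_\ell} = 5^{-\ell} b_0$. Now the target at level $\ell$ is that at most $k_\ell/100$ rows per column become dangerous, which needs only $\lambda b_\ell = O(1)$, and the second term of \Cref{thm:general_freedman} becomes $O(k_\ell \log^2 n/(C^4 k)) \leq k_\ell/100$ for $k \geq \log^2 n$ and $C$ a large constant. To keep the total blocked subspace small across the now-$O(\log\log n)$ levels, set $\eta_\ell = \eta/2^\ell$ geometrically (not uniformly $\Theta(1/L)$) and raise the level-$\ell$ large threshold to $2^\ell \cdot 10 k_\ell$; the $2^\ell$ factor in $b_\ell$ absorbs both adjustments, and $\sum_\ell b_\ell = O(b_0) = O(\sqrt{k})$ as desired. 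The middle regime $k \leq \log^2 n$ follows by the same scheme with $b_\ell = C\cdot 2^\ell k_\ell^{1/3}\log^{1/3} n$.
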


Notice that there are no hidden $\poly(\log \log n)$ factors in these bounds any more.
In particular, for $k \geq \log^2 n$,  this exactly resolves the Beck-Fiala conjecture for such $k$, proving Theorem \ref{thm:bf_conj_large_k}.

This also improves upon the basic bounds in Theorem \ref{thm:bf_weak-informal} in other ways. In particular, for $k= \log n$, the discrepancy above is $O(\log^{2/3} n)$, as opposed to $\widetilde{O}(\log^{3/4}n)$ previously.

\smallskip

\noindent \textbf{High-Level Idea.} 
As discussed previously, instead of directly bounding the number of dangerous rows per column by $O(\mu)$ (recall that $\mu$ is the size threshold for tiny rows) in one shot as in \Cref{lem:per-column}, the idea is to adopt a multi-layered strategy --- where at each level, we only aim at reducing the column sparsity of the dangerous rows in any column by a {\em constant factor}. 

This allows us to remove of the $\log \log n$ factor in $\lambda$ in the proof of \Cref{lem:per-column},
which removes a $\log \log n$ factor in \Cref{lem:per-column} and thus \Cref{thm:bf_weak}.

For clarity of presentation, we first describe the algorithm and its analysis for $k = \Omega(\log^2 n)$, where the discrepancy bound in \Cref{thm:bf_better} is $O(\sqrt{k})$. We consider the case of $k \ll \log^2 n$ in Section \ref{subsec:small_k_bf_better}, which follows the same ideas, but requires setting some parameters differently.

\subsection{A Multi-Layered Algorithm for $k = \Omega(\log^2 n)$} 
\label{subsec:alg_bf_large_k}

Let $b_0 := C \sqrt{k}$ be the target discrepancy bound, where $C$ is some large enough constant. 
As usual, we can ignore all tiny or irrelevant rows $i$ with sizes at most $\mu := \max(b_0,b_0^2/\log n)$, as they incur at most $O(b_0)$ additional discrepancy, with high probability. As we are in the regime of $k = \Omega(\log^2n$), the target discrepancy $b_0 = \Omega(\log n)$, and thus $\mu= b_0^2/\log n$ here.

\noindent \textbf{Level of Non-Tiny Rows.} For each time $t$, we assign a (unique) level $\ell_t(i) \in \{0,1, \cdots, L\}$ to every row $i$ that is not tiny, where we choose $L:= \log_{100}(10 k/\mu) = O(\log \log n)$.
It is useful to think of these as ``danger" levels.
At time $0$, every row $i$ starts at level $\ell_0(i) = 0$, and its level can only increase over time, depending on how much discrepancy it accrues.

Our algorithm will guarantee (with high probability) that at each time $t$, each column $j$ has sparsity at most  $k_\ell := k/(100^\ell)$ when restricted to level-$\ell$ rows. 
In other words, the set of level-$\ell$ rows corresponds to a Beck-Fiala instance with sparsity $k_\ell \ll k$.
Our goal will be to bound the total discrepancy incurred by a row $i$ during the time it is at level $\ell$ by $O(b_{\ell})$, where 
\[b_\ell := C 2^\ell \sqrt{k_\ell} = C 5^{-\ell} \sqrt{k} = 5^{-\ell}b_0.\] 
This would imply \Cref{thm:bf_conj_large_k}, as the total discrepancy incurred for a row over all possible levels is at most $O(\sum_{\ell \geq 0} b_\ell) = O(\sqrt{k})$. Here, note that the $100^\ell$ in the denominator of $k_\ell$ offsets the $2^\ell$ factor in the setting of $b_\ell$, which is crucial for avoiding extra $\log\log n$ factors in the bound.

The level of a row $i$ is defined inductively. Let $t^{(\ell)}_i$ denote the first time when row $i$ reaches level $\ell$. For $\ell = 0$, we set $t^{(0)}_i = 0$ for all rows that are not tiny at time $0$. We now specify when a row $i$ at level $\ell$ gets {\em upgraded} to
level $\ell + 1$. 

\smallskip
{\bf Large and Medium Rows, and Level Upgrades.}
At time $t$, call a level-$\ell$ row $i$ {\em large}, if its size is more than $2^\ell \cdot 10 k_\ell$.
 A level-$\ell$ row is called {\em medium} or {\em interesting} if it is neither large nor tiny.
 
For each medium level-$\ell$ row $i$ at time $t$, we define its regularized discrepancy
\begin{align} \label{eq:reg_disc_bf_large_k}
Y_t^{(\ell)}(i) := \langle a_i, x_t - x_{t^{(\ell)}_i} \rangle + \beta_\ell G_t(i) ,
\end{align}
where $\beta_\ell := b_\ell/(2^\ell \cdot 10 k_\ell)$ and $G_t(i) = \sum_{j \in \mathcal{V}_t} a_i(j)^2 (1 - x_t(j)^2)$ is the energy as in \Cref{sec:bf_weak}. 

Note that the second term in \eqref{eq:reg_disc_bf_large_k} is at most $\beta_\ell 2^\ell \cdot 10 k_\ell \leq b_\ell$, and the first term in \eqref{eq:reg_disc_bf_large_k} measures the discrepancy row $i$ incurs during its time at level $\ell$. 

We say that a level-$\ell$ row $i$ becomes dangerous if $Y_t^{(\ell)}(i) \geq 2 b_\ell$, and the level of $i$ is immediately upgraded to $\ell+1$ as soon as it becomes dangerous at level $\ell$. 
In other words, we set $t_i^{(\ell+1)}$ to be the first time that the level-$\ell$ row $i$ becomes dangerous.
Dangerous level-$L$ rows do not get upgraded --- they will simply be blocked.

We denote the level-$\ell$ medium rows by $\mathcal{M}_t^{(\ell)}$. 
Let $E_t^{(\ell)} \in \mathbb{R}^{|\mathcal{M}_t^{(\ell)}| \times n_t}$ denote the matrix whose $i$th row is $a_i - 2 \beta_\ell a_i^2 x_t$ restricted to the alive elements $\mathcal{V}_t$, for all medium level-$\ell$ rows $i$.

\smallskip

We can now describe the algorithm.

\smallskip
\noindent \textbf{The Algorithm.} We run the algorithm in \Cref{sec:bf_weak} for all the levels in parallel. 
This is done by choosing the subspace $W_t$ and the matrices $E_s$ in the SDP at time $t$ as follows. 

\begin{enumerate}
    \item Let $W_t$ denote the subspace spanned by (i) all the large rows $a_i$ at all levels, (ii) the $i$th row of $E^{(L)}_t$ for all dangerous level-$L$ rows $i$, (iii) the vector $x_t$. If $\dim(W_t) > n_t/3$, declare FAIL. 

    \item Solve the SDP \eqref{sdp:jj_new}-\eqref{sdp:psd_new} with the subspace $W = W_t$, the matrix $E_\ell = E_t^{(\ell)}$ and parameters  $\delta=1/3, \kappa = 1/6$, $\eta = 1/6$, and $\eta_\ell = \eta/2^\ell$ for all levels $\ell \in \{0, \cdots, L\}$.
    Use the resulting SDP solution $U_t$ to find $v_t$ as in \eqref{eq:find_vt}, and update the coloring $x_t$ by $v_t \sqrt{dt}$.
\end{enumerate}

\subsection{The Analysis for $k = \Omega(\log^2 n)$}
We now analyze the algorithm.
The key to the analysis is to show that within each column $j$, with high probability, for all levels $\ell \in \{0,\ldots,L\}$, at most $k_\ell/100$ rows become dangerous and get upgraded to level $\ell+1$ (and for $\ell=L$, they are blocked). 

\begin{lemma}[Number of dangerous rows]
\label{lem:num_dang_bf_large_k}
Assume $k = \Omega(\log^2 n)$. Then, with high probability, for all columns $j \in [n]$ and all levels $\ell \in \{0,\cdots,L\}$, at most $k_\ell$ rows in column $j$ ever reach level $\ell$. 
Moreover, at most $k_L/100$ rows in column $j$ ever become dangerous at level $L$.
\end{lemma}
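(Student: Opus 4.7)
The plan is to prove both parts of the lemma simultaneously by induction on the level $\ell \in \{0,1,\ldots,L\}$, with the inductive hypothesis being that at most $k_\ell$ rows in column $j$ ever reach level $\ell$. The base case $\ell = 0$ is immediate since every column has sparsity at most $k = k_0$. For the inductive step, we will show that whenever at most $k_\ell$ rows in column $j$ have reached level $\ell$, with high probability at most $k_\ell/100 = k_{\ell+1}$ of them ever become dangerous at level $\ell$. Since dangerous level-$\ell$ rows (for $\ell < L$) are precisely those upgraded to level $\ell+1$, this propagates the hypothesis; applied at $\ell = L$, the same argument yields the second assertion.

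To execute the inductive step, the first task is to verify, in parallel with Lemma~\ref{lem:weak_pairwise_bf_weak}, that the increment $dY_t^{(\ell)}$ restricted to the coordinates $\mathcal{M}_t^{(\ell)}$ is $(\alpha_\ell, \theta_\ell)$-pairwise independent with
\[
\alpha_\ell = O\!\left(\frac{k_\ell \cdot 2^\ell}{\mu}\right) \qquad \text{and} \qquad \theta_\ell = \Theta(\beta_\ell) = \Theta\!\left(\frac{b_\ell}{2^\ell k_\ell}\right).
\]
The drift bound $\theta_\ell = \beta_\ell/5$ follows from the same computation as before. The spectral-independence bound uses the SDP constraint \eqref{sdp:pairwise-disc} with $E_s = E_t^{(\ell)}$ and $\eta_s = \eta_\ell = \eta/2^\ell$; here the inductive hypothesis on column sparsity of level-$\ell$ rows is what gives $|\mathcal{M}_t^{(\ell)}|/n_t \leq k_\ell/\mu$, since every medium level-$\ell$ row has size at least $\mu$.

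For each column $j$, apply Theorem~\ref{thm:general_freedman} to the coordinate-wise process $Z_t := Y_t^{(\ell)}|_{\mathcal{C}_j} - b_\ell$, with $m \leq k_\ell$ (by the inductive hypothesis) and target $B = b_\ell$; the offset $b_\ell$ absorbs the initial value $\beta_\ell G_{t_i^{(\ell)}}(i) \leq b_\ell$ at the instant row $i$ enters level $\ell$. The key improvement over Lemma~\ref{lem:per-column} is that we now only need the population of dangerous rows in each column to shrink by a constant factor of $100$ per level (rather than all the way down to $\mu$ in one shot), so it suffices to take $\lambda = \Theta(1/b_\ell)$ chosen large enough that $e^{\lambda b_\ell} \geq 1000$, instead of $\lambda = \Theta(\log\log n/b_\ell)$. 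The first term of the decoupling bound becomes $k_\ell/e^{\lambda b_\ell} \leq k_\ell/1000$. For the second term, using $b_\ell^2 = C^2 \cdot 4^\ell k_\ell$, a direct substitution gives
\[
\frac{\lambda \alpha_\ell \log n}{\theta_\ell} = O\!\left(\frac{k_\ell^2 \cdot 4^\ell \log n}{b_\ell^2\, \mu}\right) = O\!\left(\frac{k_\ell \log n}{C^2 \mu}\right),
\]
and since $k = \Omega(\log^2 n)$ implies $\mu = C^2 k/\log n = \Omega(\log n)$, this is also at most $k_\ell/1000$ for $C$ sufficiently large. The two terms combine to at most $k_\ell/100$, as required.

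The main technical subtlety is that the set of medium level-$\ell$ rows, and hence the domain of $Y_t^{(\ell)}$, varies in time, so one cannot regard $Z_t$ directly as a process on a fixed coordinate set. We handle this exactly as in the footnote to Lemma~\ref{lem:per-column}: set $Z_t(i) = -\infty$ whenever row $i$ is not currently a medium level-$\ell$ row, and the pairwise-independence and drift conditions are only demanded on the currently active coordinates $\mathcal{M}_t^{(\ell)} \cap \mathcal{C}_j$. A final union bound over the $L+1 = O(\log\log n)$ levels and the $n$ columns, using that Theorem~\ref{thm:general_freedman} applies with failure probability $n^{-c}$ for any fixed $c$, completes the proof. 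The main conceptual obstacle, beyond this bookkeeping, is checking that the weakened spectral-independence $\eta_\ell = \eta/2^\ell$ across levels is perfectly offset by the geometric schedule $b_\ell = C \cdot 2^\ell \sqrt{k_\ell}$, so that the condition on $\mu$ (and hence on $k$) is \emph{independent} of the level $\ell$; this is what enables the $\poly(\log\log n)$-free bound and is the essential gain of the multi-layered approach.
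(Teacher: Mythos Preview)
Your proof is correct and follows essentially the same line as the paper's: induction on $\ell$, verification that $dY_t^{(\ell)}$ is $(\alpha_\ell,\theta_\ell)$-pairwise independent with the stated parameters via the SDP constraint \eqref{sdp:pairwise-disc} with $\eta_\ell=\eta/2^\ell$ and the inductive bound $|\mathcal{M}_t^{(\ell)}|\le k_\ell n_t/\mu$, and application of Theorem~\ref{thm:general_freedman} with $\lambda=\Theta(1/b_\ell)$ (rather than $\Theta(\log\log n/b_\ell)$) so that the resulting bound $k_\ell/e^{\lambda b_\ell}+O(k_\ell\log^2 n/(C^4 k))\le k_\ell/100$. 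One small bookkeeping point: for rows not yet in $\mathcal{M}_t^{(\ell)}$ (still large, or not yet at level $\ell$), the paper sets $Y_t^{(\ell)}(i)=b_\ell$ rather than $-\infty$, so that the transition into the medium state is a \emph{downward} jump in $Z_t$ --- your convention of $-\infty$ for all non-medium rows would create an upward jump at entry, but since you cite the footnote to Lemma~\ref{lem:per-column} (which uses the correct convention) this is clearly just shorthand.
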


\begin{proof}
We prove the lemma by induction on the level $\ell$. The base case of $\ell = 0$ is trivial as $k_0=k$. 

Suppose that the lemma holds for some level $\ell \geq 0$. Fix some column $j$. We will prove that with high probability
at most $k_\ell/(100)$ level-$\ell$ rows in column $j$ ever become dangerous throughout the algorithm. As $k_{\ell+1}=k_\ell/100$, this will show that the statement also holds for level $\ell+1$. 
For $\ell = L$, this will give the ``moreover'' part of the statement.

Analogous to the proof of \Cref{lem:per-column}, we will apply \Cref{thm:general_freedman} to the regularized discrepancy vector $Y_t^{(\ell)}$ for each level $\ell$. 
First, we address a technical point. Note that the coordinates on which $Y_t^{(\ell)}$ is defined might change over time, either when the sizes of level-$\ell$ rows changes (from large to medium or from medium to tiny), or when rows get upgraded (to and from level $\ell$). 
However, this issue is easily addressed by extending the definition of $Y_t^{(\ell)}(i)$ to all rows $i$, where $Y_t^{(\ell)}(i) := b_\ell$ if row $i$ is either level-$\ell$ large, or has not reached level $\ell$ yet, and $Y_t^{(\ell)}(i) := -\infty$ if row $i$ becomes tiny.  
This guarantees that $Y_t^{(\ell)}(i)$ only decreases (discontinuously) when row $i$ enters or leaves $\mathcal{M}_t^{(\ell)}$, and the set of dangerous coordinates of $Y_t^{(\ell)}$ is not affected by the modification. 

Fix a column $j$ and consider $Y_t^{(\ell)}$ restricted to $\mathcal{C}_j$, the set of non-zero entries in column $j$. Analogous to the proof of \Cref{lem:weak_pairwise_bf_weak}, the increment of this vector is $(\alpha_\ell, \theta_\ell)$-pairwise independent with 
\[
\alpha_\ell := \frac{k_\ell}{\mu \eta_\ell} = \frac{6 k_\ell 2^\ell}{\mu} \quad \text{and} \quad \theta_\ell := \frac{\beta_\ell}{10} = \frac{b_\ell}{2^\ell \cdot 100 k_\ell} .
\]
Now applying \Cref{thm:general_freedman} to $Y_t^{(\ell)}$ restricted to $\mathcal{C}_j$ with parameters $\alpha = \alpha_\ell$, $\theta = \theta_\ell$, $B := b_\ell$ and $\lambda := 10/B$ (notice that this satisfies the requirement $\lambda \leq \theta/2$ as $b_\ell^2 \gg 2^\ell k_\ell$), we get that with high probability, the number of  level-$\ell$ rows in $\mathcal{C}_j$ that become dangerous is at most 
\begin{align*}
\frac{k_\ell}{e^{\lambda B}} + O \Big( \frac{\lambda \alpha \log n}{\theta} \Big) & = \frac{k_\ell}{e^{10}} + 
O \Big( \frac{\alpha_\ell \log n}{b_\ell\theta_\ell} \Big)  = 
\frac{k_\ell}{e^{10}} + O \Big(\frac{2^{2 \ell} k_\ell^2 \log n}{b_\ell^2 \mu} \Big) \\ & = \frac{k_\ell}{e^{10}} + O \Big(\frac{k_\ell \log^2 n}{C^4 k } \Big) \leq \frac{k_\ell}{100} ,
\end{align*}
where the last line uses that $b_\ell^2 = C^2 2^{2 \ell}k_\ell$ and  $\mu = b_0^2/\log n = C^2k/\log n $, and that $C$ is large enough and $k = \Omega(\log^2 n)$. 
\end{proof}

\smallskip
\noindent \textbf{Proof of \Cref{thm:bf_conj_large_k} for $k = \Omega(\log^2 n)$.} 
We first argue that with high probability, $\dim(W_t) \leq n_t/3$ and thus the algorithm never declares FAIL. 

We condition on the events in \Cref{lem:num_dang_bf_large_k}, which happen with high probability. 
As the sparsity for each column at  each level $\ell$ is at most $k_\ell$, there can be at most $n_t \cdot k_\ell / (2^\ell\cdot 10 k_\ell) = n_t/(10 \cdot   2^\ell )$ large level-$\ell$ rows. Hence there are at most $n_t/10$ large rows over all the levels $\ell$.

Similarly, the number of dangerous level-$L$ rows in any column $j$ is at most $k_L / 100$. 
As each dangerous level-$L$ row must have size at least $\mu$, the total number of dangerous $L$ rows is at most $n_t k_L/ (100 \mu) = n_t k/(100^L \cdot 100 \mu) \leq n_t/10$, by the choice of $L = \log_{100}(10 k/\mu)$.
Thus conditioned on the events in \Cref{lem:num_dang_bf_large_k}, the algorithm does not FAIL.

We now bound the discrepancy.
By design, for any row $i$, its discrepancy incurred while it is in level-$\ell$ is at most $O(b_\ell)$. As usual, once row $i$ becomes tiny (if it ever happens), its discrepancy increases further by at most $O(\sqrt{\mu \log n}) = O(\sqrt{k})$, with high probability. Thus the total discrepancy for each row $i$ is at most
\[
O(\sqrt{k}) + \sum_{\ell=0}^L O(b_\ell) = O(\sqrt{k}) + \sum_{\ell=0}^L O(2^L \sqrt{k_L}) = O(\sqrt{k}) .
\]
This gives the desired bound in \Cref{thm:bf_better} in the case of $k = \Omega(\log^2 n)$.  \qed

\subsection{Adapting to the Case of $k \ll \log^2 n$}
\label{subsec:small_k_bf_better}

The algorithm and the analysis for the case of $k \ll \log^2 n$ is essentially the same as the above, with only a few modifications in the setting of parameters that we describe below.

When $k \ll \log^2 n$, we set $b_0 :=  C k^{1/3} \log^{1/3} n$, and note that in this regime the tiny rows have size at most $\mu := \max(b_0, b_0^2/\log n) = b_0$. 

For each level $\ell$, we now set $b_\ell := C 2^\ell k_\ell^{1/3} \log^{1/3} n$, but keep the other parameters $k_\ell=k/(100^\ell),  \beta_\ell= b_\ell/(2^\ell \cdot 10 k_\ell)$ the same as before. We claim that \Cref{lem:num_dang_bf_large_k} still holds for these parameter settings. 

Indeed, when applying \Cref{thm:general_freedman} as in the proof of \Cref{lem:num_dang_bf_large_k}, we again have 
\[
\lambda_\ell := \frac{10}{b_\ell}, \quad \alpha_\ell := \frac{k_\ell}{\mu \eta_\ell} = \frac{6 k_\ell 2^\ell}{\mu} \quad \text{and} \quad \theta_\ell := \frac{\beta_\ell}{10} = \frac{b_\ell}{2^\ell \cdot 100 k_\ell}.
\]
The condition $\lambda_\ell \leq \theta_\ell/2$ still holds as,
\[
\frac{\lambda_\ell}{\theta_\ell} = O(1) \cdot \frac{2^\ell k_\ell}{b_\ell^2} = O(1) \cdot \frac{k_\ell^{1/3}}{2^\ell C^2 \log^{2/3}n} \ll 1 \tag{as $k_\ell \ll k \ll \log^2 n$}.
\]
Then, with high probability, the number of dangerous level-$\ell$ rows in column $j$ is bounded by
\begin{align*} \label{eq:dang_rows_bf_better}
\frac{k_\ell}{e^{\lambda_\ell B}} + O \Big( \frac{\lambda_\ell \alpha_\ell \log n}{\theta_\ell} \Big) & = \frac{k_\ell}{e^{10}} + 
O \Big( \frac{\alpha_\ell \log n}{b_\ell\theta_\ell} \Big)  = 
\frac{k_\ell}{e^{10}} + O \Big(\frac{2^{2 \ell} k_\ell^2 \log n}{b_\ell^2 \mu} \Big)  ,
\\
&= \frac{k_\ell}{e^{10}} + O \Big(\frac{2^{2 \ell} k_\ell^2 \log n}{C^3 2^{2 \ell} k_\ell^{2/3} k^{1/3} \log n} \Big) \leq \frac{k_\ell}{100},
\end{align*}
where the last line uses that $\mu=b_0$ and that $b_\ell=C 2^\ell k_\ell^{1/3} \log^{1/3} n$.

As before, conditioned on these events, the algorithm never declares FAIL, and the discrepancy of each row is at most
\[
O(b_0) + \sum_{\ell=0}^L O(b_\ell) = \sum_{\ell=0}^L O(k_\ell^{1/3} \log^{1/3} n) = O(k^{1/3} \log^{1/3} n).
\]
This gives the bound in \Cref{thm:bf_better} when $k \ll \log^2 n$.

\section{Improving the Beck-Fiala Bound Further}
\label{sec:bf_strong}

In this section, we further improve the Beck-Fiala bound and prove \Cref{thm:bf_strong}.
 
\BeckFialaStrong*

\noindent \textbf{High-Level Idea.} The algorithm and the analysis for \Cref{thm:bf_strong} are built upon the multi-layered extension in \Cref{sec:bf_better}, together with one more key idea that we explain below. 

For concreteness, let us consider the discrepancy bound $b_0$ at level $0$ --- which determines the overall discrepancy (as $b_\ell$ decreases geometrically with level $\ell$). 
The setting of $b_0$ is essentially determined by the  pairwise independence and negative drift parameters $\alpha_0$ and $\theta_0$ in the decoupling bound in \Cref{thm:general_freedman}. 
In particular, the additive term in \Cref{thm:general_freedman} is proportional to the ratio $\alpha_0/\theta_0$, 
and the greater this ratio gets, the worse our $b_0$ becomes. Previously, we set $\alpha_0 \approx k/\mu \gg 1$ and $\theta_0 \approx \beta_0 \approx b_0/k$, which results in the ratio $\alpha_0/\theta_0 \approx k^2/(b_0\mu)$. Our goal here will be to improve this ratio. So  
let us first see why these parameters are chosen in this way.

Observe that by the SDP constraints \eqref{sdp:pairwise-disc} and Theorem \ref{thm:sdp_feasibility}, the pairwise independence parameter $\alpha_0$ basically depends on the ratio of the number of medium rows $|\mathcal{M}_t|$ to $n_t$. As we can safely discard tiny rows of size less than $\mu$, we can safely set $\alpha_0 \approx k/\mu$. The setting of $\theta_0$ (which is $\approx \beta_0$) was chosen to ensure that $\beta_0 G_t(i) \ll b_0$ (in the regularized discrepancy). As each non-large row $i$ can have energy at most $k$, we can also safely set $\beta_0 \approx b_0/k$.

Both these bounds are the best possible for $\alpha_0$ and $\beta_0$ when considered individually, and cannot be improved in general. 
However, this setting of these parameters is pessimistic in that these worst case bounds cannot both hold simultaneously if the medium rows all have similar sizes.
More concretely, 
suppose all the medium rows have exactly the same size $s \in [\mu, k]$. Then there can be at most $\approx n_t k/s$ medium rows, which would give a better $\alpha_0 \approx k/s$. And we can also set $\beta_0\approx b_0/s$ more aggressively, as it still ensures that $\beta_0 G_t(i) \ll b_0$. 
With this setting, the ratio $\alpha_0/\theta_0$ would only be $\approx k/b_0$, improving upon the $k^2/b_0\mu$ ratio above by a $k/\mu \gg 1$ factor.

While the rows will have different sizes in general, we still can implement this idea by further dividing them into classes of similar sizes 
(within each level $\ell$), and set the parameters $\alpha$ and $\theta$ separately for each class (note that we can assume that $k\leq \log^2n$ and thus there are only $O(\log \log n)$ such size classes). One slight technicality is that as the size of each row changes over time, as its elements get colored, we will need to {\em dynamically} adjust parameters over time for each row. 
We then combine this idea with the idea of multiple levels from Section \ref{sec:bf_better} to obtain the overall algorithm.
This algorithm is still quite clean and the proof follows from the same template as before, except that the notation gets a bit more tedious, as we now need to track both the dangerous level and the size class for each row $i$.
We now give the details.

\subsection{A Multi-Layered Algorithm with Multiple Size Classes}
\label{subsec:alg_bf_strong}

We assume that $k < \log^2 n$, as the case of $k \geq \log^2 n$ is already handled optimally by \Cref{thm:bf_conj_large_k}. To simplify our presentation, we will not optimize the $\poly(\log \log n)$ factor in the theorem as they cannot be completely removed (for small $k$) with our current ideas.

Set the target discrepancy $b_0 := \widetilde{\Theta}(k^{1/2} + \log^{1/2} n)$, and  let $\mu := b_0$ as the size threshold for tiny rows. As usual, we can completely forget about a row once it is tiny, and we ignore these henceforth.

\smallskip
\noindent \textbf{Level of Non-Tiny Rows.} As in \Cref{sec:bf_better}, at each time $t$, we assign a level $\ell_t(i) \in \{0,1, \cdots, L\}$ to every row $i$ that is not tiny, where as before, $L:= \log_{100}(10 k/\mu) = O(\log \log n)$. The algorithm will guarantee that (1) the set of level-$\ell$ rows has column sparsity at most $k_{\ell} := k/(100)^\ell$, and (2) each row incurs discrepancy at most $\widetilde{O}(b_\ell)$, where the target discrepancy bound (at level $\ell$) is also $b_\ell:= b_0 =\widetilde{\Theta}(k_\ell^{1/2} + \log^{1/2} n)$ while it is at level $\ell$.

As in \Cref{sec:bf_better}, all non-tiny rows start at level $0$, and a level-$\ell$ row $i$ gets upgraded to level-$(\ell+1)$ when it becomes {\em dangerous} at level $\ell$. However, the notion of a dangerous row at a level $\ell$ is different from that in \Cref{sec:bf_better} and will be specified below. 
At any time $t$, a (non-tiny) row $i$ level-$\ell$ is called large if its size is more than $10 L k_\ell$.
Otherwise, we call it medium. The medium rows are further divided into $O(\log \log n)$ classes below, based on their current sizes, which we denote as $s_t(i)$. 

\smallskip
\noindent \textbf{Class of Medium Rows.} At time $t$, we say a level-$\ell$ medium row $i$ is in class $q \in [N]$, if its size 
\[s_t(i) \in \Big(\frac{10 L k_\ell}{2^q}, \frac{20 L k_\ell}{2^q} \Big],
\] 
i.e., within a factor of $\approx 2^q$ smaller than the largest allowable size of any level-$\ell$ medium row. 
For convenience, we denote the upper bound on the sizes of level-$\ell$ class-$q$ rows as $s^{(\ell)}_q := (20 L k_\ell)/2^q$.

Note that there are at most $N \leq \log (k/\mu) \leq \log\log n$ classes.  
We use $c_t(i) \in [N]$ to denote the class of a row $i$ at time $t$, and $\mathcal{M}_{t,q}^{(\ell)}$ denote the set of all level-$\ell$ medium rows that are in class $q$ at time $t$. 
Clearly, at any time $t$, a medium row $i$ can be exactly in one level $\ell$, and one class $q$. Note that over time, both its level $\ell$ and its class $q$ can only increase (as its size can only decrease). 

For a level-$\ell$ class-$q$ row $i$ at time $t$, we use $t^{(\ell)}_{i,q}$ to denote the time it reaches class $q$ at level $\ell$, and define its {\em regularized discrepancy} as
\begin{align} \label{eq:reg_disc_bf_strong}
Y_{t,q}^{(\ell)}(i) := \langle a_i, x_t - x_{t^{(\ell)}_{i,q}} \rangle + \beta^{(\ell)}_q G_t(i) ,
\end{align}
where $\beta^{(\ell)}_q := b_\ell/s^{(\ell)}_q$, and $G_t(i) := \sum_{j \in \mathcal{V}_t} a_i(j)^2 (1 - x_t(j)^2)$ is the energy as before.  

Note that for any level-$\ell$ class-$q$ row $i$, as $G_t(i) \leq s_t(i) \leq s^{(\ell)}_q$, the second term in \eqref{eq:reg_disc_bf_strong} can be at most $b_\ell$, and thus we have that $Y_{t,q}^{(\ell)}(i) \leq b_{\ell}$  at time $t = t^{(\ell)}_{i,q}$ when it reaches class $q$ at level $\ell$.
We let $E^{(\ell)}_{t,q} \in \mathbb{R}^{|\mathcal{M}_{t,q}^{(\ell)}| \times n}$ denote the matrix whose $i$th row is $a_i - 2 \beta^{(\ell)}_q a_i^2 x_t$. 

\smallskip
\noindent 
{\bf Dangerous Rows.}
We say that a level-$\ell$ class-$q$ row $i$ is dangerous at time $t$ if $Y_{t,q}^{(\ell)}(i) \geq 2 b_\ell$. When a row turns dangerous in level $\ell<L$, it is upgraded to level $(\ell+1)$ (regardless of its class $q$), and at this point its regularized discrepancy resets with its new parameters of level and class. 
As before, dangerous level-$L$ rows do not get upgraded as they will be blocked. 

A level-$\ell$ row $i$ might also get upgraded to a higher size class from its current size class $q$ (as its size decreases), and and this point its regularized discrepancy in \eqref{eq:reg_disc_bf_strong} resets again.

We now give the algorithm.

\smallskip
\noindent \textbf{The Algorithm.} We choose the subspace $W_t$ and the matrices $E_s$ in the SDP at time $t$ as follows. 

\begin{enumerate}
    \item Let $W_t$ denote the subspace spanned by (i) all the large rows $a_i$ at all levels, (ii) the $i$th row of $E_L(t)$ for all dangerous level-$L$ rows $i$, (iii) the vector $x_t$. If $\dim(W_t) > n_t/3$, declare FAIL. 

    \item Solve the SDP \eqref{sdp:jj_new}-\eqref{sdp:psd_new} with subspace $W_t$, the matrix $E_{\ell,q} = E^{(\ell)}_{t,q}$ and parameters  $\delta=1/3, \kappa = 1/6$, $\eta = 1/6$, and $\eta_{\ell,q} = \eta/(N L)$ for all levels $\ell \in \{0, \cdots, L\}$ and classes $q \in [N]$.
    Use the resulting SDP solution $U_t$ to find $v_t$ as in \eqref{eq:find_vt}, and update the coloring $x_t$ by $v_t \sqrt{dt}$.
\end{enumerate}

\subsection{The Analysis}

As before, we bound the number of rows that get dangerous at each level within each column.

\begin{lemma}[Number of dangerous rows]
\label{lem:num_dang_bf_strong}
With high probability, for all columns $j \in [n]$, levels $\ell \in \{0,\cdots,L\}$, and classes $q \in [N]$, at most $k_\ell/(100 N)$ level-$\ell$ class-$q$ rows in column $j$ ever become dangerous during the algorithm. In particular, at most $k_\ell$ rows in column $j$ ever reaches level $\ell$.
\end{lemma}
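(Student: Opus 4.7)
The plan is to follow the inductive template of the proof of \Cref{lem:num_dang_bf_large_k}, with the additional bookkeeping needed to handle the $N$ size classes within each level. I would prove both parts of the lemma simultaneously by induction on $\ell$: the base case $\ell = 0$ is trivial since the column sparsity is at most $k = k_0$, and in the inductive step the ``in particular'' assertion for level $\ell+1$ follows from the dangerous-rows bound at level $\ell$, since a level-$\ell$ row is promoted to level $\ell+1$ only upon becoming dangerous, and summing $k_\ell/(100N)$ over the $N$ classes yields $k_\ell/100 = k_{\ell+1}$.

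For the inductive step, fix a column $j$, level $\ell$, and class $q$, and consider the regularized discrepancy $Y_{t,q}^{(\ell)}$ restricted to rows in $\mathcal{C}_j$. As in the proof of \Cref{lem:num_dang_bf_large_k}, I would extend $Y_{t,q}^{(\ell)}(i) := b_\ell$ for rows $i$ that have not yet entered class $q$ at level $\ell$, and $-\infty$ once they leave it (by becoming tiny or by being promoted), so that the set of coordinates ever exceeding $2 b_\ell$ is unchanged while $Y_{t,q}^{(\ell)}(i) \leq b_\ell$ initially. The same calculation as in \Cref{lem:weak_pairwise_bf_weak} then shows that the increment $dY_{t,q}^{(\ell)}$ is $(\alpha_{\ell,q}, \theta_{\ell,q})$-pairwise independent with
\[
\alpha_{\ell,q} = O\Big(\frac{N L k_\ell}{s_q^{(\ell)}}\Big), \qquad \theta_{\ell,q} = \frac{\beta_q^{(\ell)}}{5} = \frac{b_\ell}{5 s_q^{(\ell)}},
\]
where the bound on $\alpha_{\ell,q}$ uses $\eta_{\ell,q} = \eta/(NL)$ together with $|\mathcal{M}_{t,q}^{(\ell)}|/n_t \leq 2 k_\ell / s_q^{(\ell)}$, itself a consequence of the inductive column sparsity bound $k_\ell$ at level $\ell$ and the lower bound $s_q^{(\ell)}/2$ on the size of every class-$q$ row.

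Next, I would apply \Cref{thm:general_freedman} to this restricted process with $m \leq k_\ell$, $B = b_\ell$, and $\lambda = C \log N / b_\ell$ for a sufficiently large constant $C$. The hypothesis $\lambda \leq \theta_{\ell,q}/2$ reduces to $b_\ell^2 \gg s_q^{(\ell)} \log N$, which holds since $s_q^{(\ell)} \leq 20 L k_\ell = \widetilde{O}(k)$ and $b_\ell = \widetilde{\Omega}(k^{1/2} + \log^{1/2} n)$. The choice of $\lambda$ makes the first term in \Cref{thm:general_freedman} at most $k_\ell/(200N)$, and the second term becomes
\[
O\Big(\frac{\lambda \alpha_{\ell,q} \log n}{\theta_{\ell,q}}\Big) = O\Big(\frac{k_\ell \cdot N L \log N \log n}{b_\ell^2}\Big) \leq \frac{k_\ell}{200 N},
\]
provided $b_\ell^2 \gg N^3 L \log N \log n = \widetilde{O}(\log n)$, which is absorbed by the $\widetilde{\Theta}$ in the definition of $b_\ell$. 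A union bound over all triples $(j,\ell,q)$ then closes the induction.

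The main technical obstacle is the careful treatment of rows changing class or level during the process, since $Y_{t,q}^{(\ell)}$ is re-initialized at each such transition and is not naturally a single process satisfying $Z_0 \leq 0$ to which \Cref{thm:general_freedman} applies. This will be handled exactly as in \Cref{lem:num_dang_bf_large_k} via the discontinuous extension described above, which forces $Y_{t,q}^{(\ell)}$ to only decrease at transitions, thereby preserving the set of dangerous events and allowing \Cref{thm:general_freedman} to be invoked cleanly on the extended process.
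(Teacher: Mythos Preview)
Your proposal is correct and follows essentially the same approach as the paper: induction on $\ell$, verifying $(\alpha,\theta)$-pairwise independence of $dY_{t,q}^{(\ell)}$ with the same parameters (up to constants), and applying \Cref{thm:general_freedman} with the extended process to handle class/level transitions. The only cosmetic difference is your choice $\lambda = C\log N/b_\ell$ versus the paper's $\lambda = (10\log\log\log n)/b_\ell$, but since $N \leq \log\log n$ these are equivalent up to constants and the resulting bounds on both terms of \Cref{thm:general_freedman} go through identically.
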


\begin{proof}
The proof is analogous to that of \Cref{lem:num_dang_bf_large_k}, by inductively showing that with high probability, at most $k_\ell/(100N)$ level-$\ell$ rows in column $j$ ever become dangerous throughout the algorithm. 
Let us assume inductively that at most $k_\ell$ rows in column $j$ ever reaches some level $\ell$. 

As before, we would like to apply the decoupling bound in \Cref{thm:general_freedman} to the random process $Y_{t,q}^{(\ell)}(i)$ for all level-$\ell$ class-$q$ rows $i$. 
As we are inductively assuming that the level-$\ell$ rows are $k_{\ell}$-column sparse, and as each class-$q$ row has size $>s^{(\ell)}_q/2$, there can be at most $n_t k_{\ell} / (s^{(\ell)}_q/2)$ level-$\ell$ class-$q$ rows at any time. 
Therefore, analogous to the proof of \Cref{lem:weak_pairwise_bf_weak}, the increment of $Y_{t,q}^{(\ell)}$ is $(\alpha^{(\ell)}_q, \theta^{(\ell)}_q)$-pairwise independent with
\[
\alpha^{(\ell)}_q := \frac{k_\ell}{(s^{(\ell)}_q/2) \eta_{\ell,q}} = \frac{12 N L k_\ell}{s^{(\ell)}_q } \quad \text{ and } \quad \theta^{(\ell)}_q := \frac{\beta^{(\ell)}_q}{10} = \frac{b_\ell}{10 s^{(\ell)}_q} .
\]

We now apply \Cref{thm:general_freedman} to $Y^{(\ell)}_{t,q}$ restricted to $\mathcal{C}_j$ with parameters $\alpha = \alpha^{(\ell)}_q$, $\theta = \theta^{(\ell)}_q$, $B := b_\ell$ and $\lambda := (10 \log\log\log n)/B$, which satisfies the requirement $\lambda \leq \theta/2$ as 
\[
\frac{\lambda}{\theta} = O(1) \cdot \frac{(\log\log\log n) s_q^{(\ell)}}{b_\ell^2} = O(1) \cdot \frac{(\log\log\log n) L k_{\ell}}{b_\ell^2 2^q} \ll 1 ,
\]
where the last inequality above follows as $b_\ell = \widetilde{\Theta}(k_{\ell}^{1/2} + \log^{1/2} n)$, and thus $b_\ell^2 \gg L k_\ell \log \log \log n$. 
This gives that with high probability, the number of dangerous level-$\ell$ class-$q$ rows is at most 
\begin{align*}
\frac{k_\ell}{e^{\lambda B}} + O \Big( \frac{\lambda \alpha \log n}{\theta} \Big) 
& = \frac{k_\ell}{(\log \log n)^{10}} + O \Big(\frac{\log\log\log n}{b_\ell} \cdot \frac{N L k_\ell}{s^{(\ell)}_q} \cdot \frac{s^{(\ell)}_q}{b_\ell} \cdot \log n\Big) \\
&= \frac{k_\ell}{(\log \log n)^{10}} + O \Big(\frac{(\log\log\log n) N L k_\ell \log n}{b_\ell^2} \Big) \leq \frac{k_\ell}{100 N} ,
\end{align*}
where the last inequality follows as $b_\ell^2 = \widetilde{\Omega}(\log n)$ which is also $ \widetilde{\Omega}(NL \log n)$, since both $N$ and $L$ are at most $O(\log \log n)$.
\end{proof}

\noindent \textbf{Wrapping Things Up.} Now we can wrap things up and prove \Cref{thm:bf_strong}.

\begin{proof}[Proof of \Cref{thm:bf_strong}]
By \Cref{lem:num_dang_bf_strong}, within each column $j$, the number of dangerous level-$\ell$ rows across all size classes is at most $k_\ell/100$ with high probability. As in \Cref{sec:bf_better}, this implies that the algorithm never declares FAIL with high probability. 

For any row $i$, by design of the algorithm, it can incur discrepancy at most $b_\ell$ at each class $q$ while it is at level $\ell$, and at most $\mu = b_0$ after it becomes tiny. Thus the total discrepancy for each row is at most 
\[
b_0 + \sum_{\ell=0}^L O(N b_\ell) \leq \widetilde{O}(k^{1/2} + \log^{1/2} n) ,
\]
where the inequality above uses our setting of $b_\ell = \widetilde{\Theta}(L(k_\ell^{1/2} + \log^{1/2} n))$, and that both $L$ and $N$ are $O(\log\log n)$ and are subsumed in $\widetilde{O}(\cdot)$.
\end{proof}

\section{Concluding Remarks}
\label{sec:conclusion}
Our $\widetilde{O}(k^{1/2}+ \log^{1/2}n)$ bound for the Beck-Fiala problem gives evidence that the Beck-Fiala Conjecture may be true for any $k$ (possibly up to a $\poly(\log \log n)$ factor).
Currently, the additive $\widetilde{O}(\log^{1/2}n)$ loss in \Cref{thm:bf_strong}  prevents us from improving over the simple $O(k)$ bound for $k\leq \log^{1/2}n$. Also recall that for Beck-Fiala instances with $k=\log^{1/2}n$, the $O(k)$ bound translates to an $O(k^{1/2})=O(\log^{1/4} n)$ bound for the corresponding Koml\'os instance, and thus our bound for the Koml\'os problem currently gets stuck at $\widetilde{O}(\log^{1/4} n)$.

It would be very interesting if this additive $\widetilde{O}(\log^{1/2}n)$ loss could be improved or even removed. We briefly discuss the reason for this loss in our current analysis, and also mention some related questions below.

\smallskip
\noindent
{\bf The Reason for the Loss.}
Currently, this additive loss arises due to the concentration bound required to ensure that the number of dangerous rows is not too large for {\em every} column and {\em every} time $t$. We used this to bound the number of dangerous rows for any time $t$, irrespective of which subset of $n_t$ columns were alive at that time. 

This worst case control for {\em every} column is needed because currently we have no way to control how the elements get colored as the algorithm progresses. In particular, we need to assume that the alive elements at each time $t$ may be chosen completely adversarially. But unfortunately,  the $\log^{1/2} n$ additive loss is provably unavoidable 
under this adversarial assumption.

It seems plausible that even a mild control on which subset of elements are alive may suffice to give $\widetilde{O}(k^{1/2})$ and $\widetilde{O}(1)$ bounds for Beck-Fiala and Koml\'os problems.
In particular, notice that if our target discrepancy is $ck^{1/2}$, then for a typical column the expected number of dangerous rows is at most $\approx k \exp(-c^2)$ which is $\ll 1$ already for 
$c= \Omega(\log \log n)$ (as we can assume that 
$k \leq  \log^2n$).

We now state some other related open questions.

\smallskip
\noindent{\bf
Koml\'os Problem with Bounded Rows and Columns.} 
Consider the special case of the Koml\'os problem where both the rows and columns have unit length. 
An interesting example of such a setting is the case where the columns form some orthonormal basis. A bound on the  discrepancy for such instances would imply the same bound on the covering radius of an arbitrarily rotated unit cube with respect to the integer lattice. Currently, nothing better than the general bound is known for this special case.

Interestingly, for the Beck-Fiala version of this problem, i.e., where the rows and columns both have $k$ entries with magnitude $k^{-1/2}$, an $O((\log \log n)^{1/2})$ bound follows from the results here (or in \cite{BJ25b}). This is because for $k\leq \log^2 n$ the Lov\'asz Local Lemma (LLL) implies an $O(\sqrt{\log k}) = O((\log \log n)^{1/2})$ bound, and for $k\geq \log^2 n$,  \Cref{thm:bf_conj_large_k} gives an $O(1)$ bound.

Even though a Koml\'os instance can be viewed as a combination of Beck-Fiala instances at various scales, it is unclear how to get an analogous $O(\poly(\log \log n))$ bound. The problem is that while the LLL works well for small $k$, say $k = \log^{1/2}n$, it loses an $O(\sqrt{\log n})$ factor when $k$ is large (say $n^{0.1}$). On the other hand, the methods in this paper work well for large $k$, but lose an $\widetilde{O}(\log^{1/4} n)$ factor for $k= \log^{1/2}n$. 
Making progress on this problem will probably require a unified algorithm that combines the ideas for LLL together with the techniques here. Such an algorithm will be extremely interesting and is likely to have other useful applications.

\medskip
\noindent {\bf Prefix Discrepancy Problems.}
It would be very interesting if the  techniques here can also be useful for prefix discrepancy problems.

In the prefix Koml\'os problem, given a matrix $A \in \mathbb{R}^{m \times n}$ with unit-length columns, we want to find a coloring $x \in \{-1,1\}^n$ to minimize the discrepancy over all the prefixes  (first $j$ columns of $A$ for every $j\leq n$).
Extending his approach from \cite{Ban98}, Banaszczyk showed a non-constructive bound of $O(\sqrt{\log n})$ \cite{Ban12}. Interestingly, the best algorithmic bound is still only $O(\log n)$. Can we improve Banaszczyk's non-constructive bound, or give a better algorithmic bound than $O(\log n)$?
The same question applies for the Beck-Fiala case.  

Any improvements to these prefix problems might also give better bounds for Tusn\'ady's problem, where one wants to minimize the combinatorial discrepancy of all axis-parallel rectangles for a set of $n$ points in $[0,1]^d$, using ideas from \cite{Nik17,BG17}.  
Another related question is the vector balancing problem, closely related to the Steinitz problem. Here we are given vectors $v_1,\ldots,v_n \in \R^m$, each with $\|v_i\|_\infty \leq 1$, and our goal is to find a coloring to minimize the discrepancy of each prefix. The best bound is $O(\sqrt{m \log n})$  due to Banaszczyk, and the corresponding algorithmic version was obtained in \cite{BG17}. Can these bounds be improved?

\appendix

\section{Missing Proofs and Technical Details}

\subsection{Banaszczyk's $O(\log^{1/2} n)$ Bound for the Koml\'os Problem}
\label{subsec:bana_bound_komlos}
In this section, we give details for the algorithmic proof of Banaszczyk's bound from \Cref{subsec:bana_bf_overview}. 
In fact, we recover Banaszczyk's more general $O(\log^{1/2} n)$ bound for the Koml\'os problem. 
These ideas have appeared either directly or indirectly in \cite{BG17,BDG19,BLV22}, but we give a simple and self-contained proof here for completeness. 

\subsubsection{An Algorithm for Banaszczyk's Koml\'os Bound}
\label{subsubsec:alg_bana_komlos}
Let $A$ be an $m\times n$ input matrix with each columns $a^j$ satisfying $\|a^j\|_2\leq 1$. 
Consider the following algorithm using the framework and notation from \Cref{subsec:basic_framework,subsec:sdp_bana}. 

At each time $t$, repeat the following until $n_t \leq  10$.
\begin{enumerate}
\item Call a row $i$ large (at time $t$)  if $\sum_{j \in \mathcal{V}_t} a_i(j)^2 \geq 4$. Let $W_t$ denote the subspace spanned by all the large rows, and the additional (single) vector $x_t$. 
    \item Solve the SDP for Banaszczyk in \eqref{sdp:jj}-\eqref{sdp:psd} with $W = W_t$, $\kappa = 1/4$ and $\eta=1/4$. Use the resulting solution $U_t$ to find $v_t$ as in \eqref{eq:find_vt}, and update the coloring $x_t$ by $v_t \sqrt{dt}$.
\end{enumerate}

\subsubsection{Analysis} 
\label{subsubsec:analysis_Banaszczyk_komlos}
We first show that the SDP is always feasible and then bound the discrepancy.
\begin{claim}
    The SDP is feasible at each time $t$.
\end{claim}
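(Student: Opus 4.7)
The plan is to verify the hypothesis of \Cref{thm:sdp_feasibility_bana} for the subspace $W = W_t$ with parameters $\kappa = \eta = 1/4$. Since $\kappa + \eta = 1/2$, it suffices to show that $\dim(W_t) \leq n_t/2$, i.e., $\delta \leq 1/2$.

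The key step is to bound the number of large rows at time $t$. Let $L_t$ denote the set of large rows, and recall that $i \in L_t$ means $\sum_{j \in \mathcal{V}_t} a_i(j)^2 \geq 4$. Using the Koml\'os hypothesis that every column has $\ell_2$ norm at most $1$, we sum the squared row-restricted norms over all rows and exchange the order of summation:
\[
4 \cdot |L_t| \;\leq\; \sum_{i \in L_t} \sum_{j \in \mathcal{V}_t} a_i(j)^2 \;\leq\; \sum_{j \in \mathcal{V}_t} \sum_{i=1}^m a_i(j)^2 \;\leq\; \sum_{j \in \mathcal{V}_t} 1 \;=\; n_t,
\]
so $|L_t| \leq n_t/4$. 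Adding the single extra vector $x_t$ gives $\dim(W_t) \leq n_t/4 + 1$.

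Since the algorithm only runs while $n_t > 10$, we have $\dim(W_t)/n_t \leq 1/4 + 1/n_t < 1/2$, so $\delta < 1/2$. Combined with $\kappa + \eta = 1/2$, this yields $\delta + \kappa + \eta < 1$, and \Cref{thm:sdp_feasibility_bana} gives feasibility of the SDP at time $t$. No step here is really an obstacle; the only thing to be careful about is the double counting of $a_i(j)^2$, which works cleanly because the column-norm bound in the Koml\'os setting is exactly what lets us transfer the ``few large rows'' intuition from Beck-Fiala (where it followed from column sparsity) to this more general setting.
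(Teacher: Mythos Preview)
Your proof is correct and follows essentially the same approach as the paper: bound the number of large rows by $n_t/4$ via double-counting the column $\ell_2$-mass, add one for $x_t$, and invoke the SDP feasibility theorem with $\delta + \kappa + \eta \leq 1$. The only cosmetic difference is that the paper cites \Cref{thm:sdp_feasibility} (the more general version, with $q=0$) rather than \Cref{thm:sdp_feasibility_bana}, but these coincide here.
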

\begin{proof}
Fix some time $t$. As $\sum_i a_i(j)^2\leq 1$ for each column $j$, we have that $\sum_{j \in \mathcal{V}_t}  \sum_{i \in [n]} a_i(j)^2 \leq n_t$. Thus by an averaging argument, the number of large rows is at most $n_t/4$. 

So $\dim(W_t) \leq n_t/4 + 1 \leq n_t/2$, as $n_t\geq 10$, and thus $\delta \leq 1/2$. As we set $\kappa=\eta=1/4$ (and as $q=0$) \Cref{thm:sdp_feasibility} implies that the SDP is feasible.
 \end{proof}
Notice that as $x_t$ lies in the subspace $W_t$, we have $v_t \perp x_t$, and thus $\|x_t\|^2=t$ as discussed earlier and the algorithm eventually terminates and is well-defined.

\smallskip
\noindent \textbf{Discrepancy Bound.}
We now show that with high probability all rows have discrepancy $O(\log^{1/2} n)$. 
To this end, we use the following Freedman-type  concentration inequality for super-martingales. 

\begin{fact}[Lemma 2.2 in \cite{Ban24}]  \label{fact:freedman_conc_komlos}
Let $\{Z_t: t = 0,1, \cdots\}$ be a sequence of random variables with increments $\Delta Z_t := Z_t - Z_{t-1}$, such that $Z_0$ is deterministic and $\Delta Z_t \leq M$ for all $t \geq 1$.\footnote{Strictly speaking, this appears in \cite{Ban24} with $M=1$, but the version here follows directly by scaling $\Delta Z_t$.
For our purposes, $M$ will be $O(\sqrt{dt}) = 1/\poly(n)$ and hence arbitrarily small.} If for all $t \geq 1$, 
\[
\E_{t-1}[\Delta Z_t] \leq - \delta\, \E_{t-1}[(\Delta Z_t)^2]
\]
holds with $0 < \delta < 1/M$, where $\E_{t-1}[\cdot]$ denotes $\E[\cdot | Z_1, \cdots, Z_{t-1}]$. Then for all $\xi \geq 0$, 
\[
\p\big( Z_t - Z_0 > \xi \big) \leq \exp(- \delta \xi). \]
\end{fact}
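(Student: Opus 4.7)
The plan is to follow the standard exponential-supermartingale route behind Freedman-type inequalities. Without loss of generality assume $Z_0 = 0$ (otherwise work with $Z_t - Z_0$, which has the same increments and the same hypothesis). I would define
\[
 M_t := \exp(\delta Z_t),
\]
and aim to show that $\{M_t\}$ is a nonnegative supermartingale with $\E[M_0] = 1$, after which a one-line Markov argument will deliver the tail bound.

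The main step is to control the conditional moment generating function of the increment $\Delta Z_t$. The key input is the elementary inequality
\[
 e^x \leq 1 + x + x^2 \quad \text{for all } x \leq 1.
\]
Since $\Delta Z_t \leq M$ and $\delta < 1/M$, we have $\delta \Delta Z_t \leq 1$ pointwise, so this inequality applies to $x = \delta \Delta Z_t$. Taking conditional expectation gives
\[
 \E_{t-1}[\exp(\delta \Delta Z_t)] \leq 1 + \delta \, \E_{t-1}[\Delta Z_t] + \delta^2 \, \E_{t-1}[(\Delta Z_t)^2].
\]
Now I would plug in the negative drift hypothesis $\E_{t-1}[\Delta Z_t] \leq - \delta \, \E_{t-1}[(\Delta Z_t)^2]$, which cancels the quadratic term exactly, yielding $\E_{t-1}[\exp(\delta \Delta Z_t)] \leq 1$. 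Multiplying by the $\mathcal{F}_{t-1}$-measurable factor $M_{t-1}$ then gives $\E_{t-1}[M_t] \leq M_{t-1}$, establishing the supermartingale property.

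Iterating yields $\E[M_t] \leq M_0 = 1$, and Markov's inequality finishes the argument:
\[
 \p(Z_t > \xi) \;=\; \p(M_t > e^{\delta \xi}) \;\leq\; e^{-\delta \xi} \, \E[M_t] \;\leq\; \exp(-\delta \xi).
\]
I do not expect any genuine obstacle here; the only subtle point is the quadratic Taylor-style bound $e^x \leq 1 + x + x^2$, which is exactly why we need $\delta < 1/M$ together with $\Delta Z_t \leq M$ (to ensure $\delta \Delta Z_t \leq 1$). Everything else is routine.
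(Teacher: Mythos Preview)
Your proof is correct and is exactly the standard exponential-supermartingale argument underlying this Freedman-type bound. Note that the paper does not supply its own proof of this statement: it is quoted as a black-box fact from \cite{Ban24} (Lemma~2.2 there), so there is nothing to compare your approach against within this paper. Your write-up would serve perfectly well as a self-contained justification.
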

Fix some row $i$. Notice that as long as the row is large, $v_t$ is orthogonal to it, and its discrepancy stays $0$. Thus, it suffices to bound the discrepancy from the time the row becomes small. 
This follows by \Cref{lem:algo_bana_komlos} below, which we
state in a slightly more general form for use later on.

\begin{restatable}{lemma}{AlgoBanaszczyk}
\label{lem:algo_bana_komlos}
Let $y_t \in [-1,1]^n$ be a random process for $t \in [0,T]$ with increment $d y_t = v_t \sqrt{d t}$, where $v_t \in \R^n$ is an arbitrary mean-zero random vector\footnote{The distribution of $v_t$ is allowed to depend on the outcome of all $v_{t'}$ with $t' < t$.} such that $\E[v_t v_t^\top] \preceq O(1) \cdot  \diag(\E[v_t v_t^\top])$. Then for any vector $a \in \mathbb{R}^n$, with probability at least $1 - 1/\poly(n)$,
\[
\big|\langle a, y_T - y_0 \rangle \big| 
\leq O(\|a\|_2 \log^{1/2} n) .
\]
\end{restatable}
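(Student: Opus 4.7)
The plan is to apply the Freedman-type concentration inequality (Fact~\ref{fact:freedman_conc_komlos}) to a carefully designed supermartingale built from the discrepancy process $M_t := \langle a, y_t - y_0\rangle$ and the ``energy proxy'' $V_t := \sum_j a_j^2 y_t(j)^2$. By replacing $a$ with $-a$ and taking a union bound, it suffices to prove the one-sided bound $M_T \leq O(\|a\|_2 \sqrt{\log n})$ with high probability.

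First I would compute the relevant Ito-style increments. For $M_t$ we have $dM_t = \langle a, v_t\rangle \sqrt{dt}$, giving $\E[dM_t] = 0$ and $\E[(dM_t)^2] = \E[\langle a, v_t\rangle^2]\, dt$. For $V_t$, expanding $y_{t+dt}(j)^2 = (y_t(j) + v_t(j)\sqrt{dt})^2$ yields $dV_t = 2\sum_j a_j^2 y_t(j) v_t(j)\sqrt{dt} + \sum_j a_j^2 v_t(j)^2 \, dt$, whose conditional expectation is $\E[dV_t] = \sum_j a_j^2 \E[v_t(j)^2]\, dt$. Now define the auxiliary process $Z_t := \lambda M_t - c V_t$ for constants $\lambda, c > 0$ to be chosen below. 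Its conditional drift is $\E[dZ_t] = -c \sum_j a_j^2 \E[v_t(j)^2]\, dt$, which is negative. Its conditional squared increment is dominated by the $M_t$-term: $\E[(dZ_t)^2] = \lambda^2 \E[\langle a, v_t\rangle^2]\, dt + O((dt)^{3/2}) \leq O(\lambda^2) \sum_j a_j^2 \E[v_t(j)^2]\, dt$, where the last inequality uses the sub-isotropy hypothesis $\E[v_t v_t^\top] \preceq O(1)\cdot \diag(\E[v_t v_t^\top])$ by writing $\E[\langle a, v_t\rangle^2] = a^\top \E[v_t v_t^\top] a$. Comparing the two shows $Z_t$ satisfies the negative-drift condition of Fact~\ref{fact:freedman_conc_komlos} with parameter $\delta = \Theta(c/\lambda^2)$.

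Next I would unpack the conclusion of Fact~\ref{fact:freedman_conc_komlos}. Since $y_t \in [-1,1]^n$ for all $t$, the bound $V_T \leq \|a\|_2^2$ holds deterministically, and hence $Z_T - Z_0 \geq \lambda M_T - c\|a\|_2^2$. The Fact then yields $\p(\lambda M_T > \xi + c\|a\|_2^2) \leq \exp(-\Theta(c\xi/\lambda^2))$. Choosing $c = \Theta(1)$, $\xi = \Theta(\|a\|_2^2)$ and $\lambda = \Theta(\|a\|_2/\sqrt{\log n})$, the exponent becomes $-\Theta(\log n)$ and the deterministic right-hand side simplifies, via AM-GM, to $M_T \leq \xi/\lambda + c\|a\|_2^2/\lambda = O(\|a\|_2\sqrt{\log n})$. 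Applying the same argument to $-a$ in place of $a$ yields the two-sided bound claimed in the lemma.

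The main technical point is verifying the boundedness condition $\Delta Z_t \leq M$ with $\delta < 1/M$ in Fact~\ref{fact:freedman_conc_komlos}. As long as $\|v_t \sqrt{dt}\|_\infty = o(1)$, which holds because $y_t$ stays in $[-1,1]^n$ and $dt$ can be made as small as $1/\poly(n)$ (cf.\ the framework of Section~\ref{subsec:basic_framework}), one has $|dZ_t|$ of order $O(\sqrt{dt})$ times some $\poly(n)$ factor, which is much smaller than $1/\delta = \Theta(\lambda^2)$. This is a routine check once $dt$ is taken small enough. The conceptual heart of the argument is the clean cancellation between the deterministic negative drift of $-cV_t$ and the sub-isotropic upper bound on $\E[\langle a, v_t\rangle^2]$: the sub-isotropy condition is exactly what is needed to let $V_t$ play the role of a Doob-style variance process for the martingale $M_t$, yielding subgaussian tails with variance proxy $O(\|a\|_2^2)$.
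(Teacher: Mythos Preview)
Your second-moment computation has a genuine gap: you write $\E[(dZ_t)^2] = \lambda^2 \E[\langle a, v_t\rangle^2]\,dt + O((dt)^{3/2})$, but $dV_t$ itself carries a $\sqrt{dt}$-order martingale part, namely $2\sum_j a_j^2 y_t(j) v_t(j)\sqrt{dt}$, which also contributes at order $dt$ to $(dZ_t)^2$. Including it (and using sub-isotropy) gives
\[
\E[(dZ_t)^2] \;\leq\; O\bigl(\lambda^2 + c^2 \|a\|_\infty^2\bigr)\sum_j a_j^2\,\E[v_t(j)^2]\,dt ,
\]
so the Freedman parameter is only $\delta = \Theta\bigl(c/(\lambda^2 + c^2\|a\|_\infty^2)\bigr)$. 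With your choices $c=\Theta(1)$ and $\lambda = \Theta(\|a\|_2/\sqrt{\log n})$, the term $c^2\|a\|_\infty^2$ can dominate $\lambda^2$ by a factor $\Theta(\log n)$ (take, e.g., $a$ supported on a single coordinate), and then $\delta\xi = \Theta(1)$ rather than $\Theta(\log n)$. A short optimization shows no parameter choice rescues this unless $\|a\|_\infty \leq O(\|a\|_2/\sqrt{\log n})$.

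This is exactly why the paper first splits $a=a_b+a_s$ into big entries ($|a_j|\ge \|a\|_2/\sqrt{\log n}$) and small entries. The big part has at most $\log n$ nonzero coordinates and is handled by Cauchy--Schwarz in one line; for the small part one has $|2\beta y_t(j) a_s(j)^2| = O(|a_s(j)|)$ with $\beta=\sqrt{\log n}$, so the perturbation coming from the energy term really is lower order and your Freedman argument goes through. Once you insert this splitting step, your proof and the paper's become essentially identical.
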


\begin{proof}[Proof of \Cref{lem:algo_bana_komlos}]
By rescaling $a$, we may assume without loss of generality that $\|a\|_2 = 1$. 
We write $a = a_b + a_s$, where $a_b$ contains the big entries of $a$ with magnitude at least $1/\sqrt{\log n}$, i.e., $a_b(j) = a(j) \cdot \mathbf{1}(|a(j)| \geq 1/\log^{1/2} n)$, and $a_s$ contains the remaining small entries. 

\smallskip
\noindent \textbf{Discrepancy of Big Entries.} As $\|a\|_2 = 1$, clearly $\|a_b\|_2 \leq 1$, so $a_b$ has at most $\log n$ non-zero entries.  Thus, by the Cauchy-Schwartz inequality,
\[
\big|\langle a_b, y_T - y_0 \rangle \big| \leq \|a_b\|_2 \cdot \Big(\sum_{j: a_b(j) \neq 0} (y_T(j) - y_0(j))^2\Big)^{1/2} = O(\log^{1/2} n).
\]

\noindent \textbf{Discrepancy of Small Entries.} 
We now bound the discrepancy due to the small entries $a_s$. We will do this by applying \Cref{fact:freedman_conc_komlos} to a suitable process $Z_t$ that we define next.

At time $t$, let $G_t := \sum_{j=1}^n a_s(j)^2 (1 - y_t(j)^2)$ denote the {\em energy} of $a_s$. Set $\beta := \log^{1/2} n$. We define the {\em regularized} discrepancy 
\[
Z_t := \langle a_s, y_t \rangle + \beta G_t . 
\]
Notice that $Z_t \geq \langle a_s, y_t \rangle$, as $G_t\geq 0$ for all $t$. Moreover,  $G_t \leq 1$ as $\|a_s\|_2 \leq 1$ and $y_t \in [-1,1]^n$. So intuitively, $Z_t$ should be viewed as a good proxy and an upper bound for discrepancy.

Using $dy_t = v_t \sqrt{dt}$, a direct computation gives that
\begin{align} \label{eq:dZ_t_komlos}
d Z_t = \langle a_s - 2 \beta y_t a_s^2, v_t \rangle \sqrt{dt} - \beta \langle a_s^2, v_t^2 \rangle d t,
\end{align} 
where recall that we use $a_s^2$ to denote the vector with entries $a_s(j)^2$, and similarly for $v_t^2$.
Note that 
\[\E[d Z_t] = - \beta \,\, \E \langle a_s^2, v_t^2 \rangle d t.\]
Moreover, the second moment of $dZ_t$ can be upper bounded as 
\begin{align*}
\E[(d Z_t)^2] 
& = \E  \langle a_s - 2 \beta y_t a_s^2, v_t \rangle^2 d t + O((dt)^{3/2}) \\
& \leq  O(1) \cdot \E  \langle (a_s - 2 \beta y_t a_s^2)^2, v_t^2 \rangle \leq  O(1) \cdot \E \langle a_s^2, v_t^2 \rangle d t,
\end{align*}
where the first inequality uses that $\E[v_t v_t^\top] \preceq O(1) \cdot \diag(\E[v_t v_t^\top])$, the second inequality follows as $|a_s(j)| \leq 1/\sqrt{\log n}$ and $ |y_t(j)| \leq 1$  for each $j$, and thus
$|2 \beta y_t(j) a_s(j)^2| = O(|a_s(j)|)$.

So $\E[d Z_t] \leq  -\Omega(\beta)\E[(d Z_t)^2]$, and applying \Cref{fact:freedman_conc_komlos} with $\delta = \Omega(\beta)$ and $\xi = \Omega(\beta^{-1} \log n) $ gives 
\[
\Pr[Z_T - Z_0 \geq \xi] = \exp(-\delta \xi ) = 1/\poly(n). 
\]
As $\xi = O(\log^{1/2} n)$, this implies that $Z_T \leq Z_0 + O(\log^{1/2} n)$  with high probability.
As $\langle a_s, y_T \rangle \leq Z_T$ and $Z_0 = \langle a_s, y_0 \rangle + \beta G_0 \leq \langle a_s, y_0 \rangle + \log^{1/2} n$, this implies that $\langle a_s, y_T - y_0 \rangle \leq O(\log^{1/2} n)$.
\end{proof}

\Cref{lem:algo_bana_komlos} implies that with high probability, the discrepancy of each row, from the time it becomes small, is at most $O(\log^{1/2} n)$. This gives Banaszczyk's bound for the Koml\'os problem.

\subsection{Proof of SDP Feasibility}
\label{subsec:sdp_feasibility}

We prove \Cref{thm:sdp_feasibility} in this subsection, where we restate it with the SDP constraints included. 
The proof is a slight generalization of the argument in \cite[Section 4]{BG17}. 

\begin{restatable}
{theorem}{SubIsoVecMultiClass}\label{thm:sub-isotropic-SDP_Multi-Class}
Let $W \subset \R^h$ be a subspace with dimension $\dim(W) = \delta h$, and $E_s \in \mathbb{R}^{r_s h \times h}$ for all $s \in [q]$ be a set of matrices with $r_s \geq 1$. 
Then for any $0 < \kappa, \eta, \eta_s < 1$, where $s \in [q]$, such that $\eta + \kappa + \sum_{s=1}^q \eta_s \leq 1-\delta$, there is an $h \times h$ PSD matrix $U$ satisfying the following:

(i) $\langle ww^\top, U \rangle = 0$ for all $w \in W$, 

(ii) $U_{ii} \leq 1$ for all $i \in [h]$, 

(iii) $\Tr(U) \geq \kappa h$, 

(iv) $U \preceq \frac{1}{\eta} \diag(U)$, and 

(v) $E_sU E_s^\top \preceq \frac{r_s}{\eta_s} \diag(E_s U E_s^\top)$ for all $s \in [q]$.

Furthermore, such a PSD matrix $U$ can be computed by solving a semidefinite program (SDP). 
\end{restatable}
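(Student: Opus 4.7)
The plan is to adapt the SDP duality framework of \cite[Section 4]{BG17}, which handles the special case $q=0$, to accommodate the additional affine spectral-independence constraints (v). First, I will unify (iv) and (v) by introducing $E_0 := I_h$ with $r_0 := 1$ and $\eta_0 := \eta$, reducing the problem to a single family of constraints $E_s U E_s^{\top} \preceq (r_s/\eta_s)\diag(E_s U E_s^{\top})$ for $s\in\{0,1,\ldots,q\}$, with the combined budget $\delta + \kappa + \sum_{s=0}^{q}\eta_s \leq 1$. The orthogonality constraint (i) will be handled by working with the projector $P_{W^{\perp}}$, which has trace $(1-\delta)h$.

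The strategy is to formulate the trace-maximizing SDP $\max\{\Tr(U):U\succeq 0,\,U_{jj}\leq 1,\,U\perp ww^{\top}\text{ for }w\in W,\text{ and the unified spectral-independence constraints hold}\}$ and to prove its optimum is at least $\kappa h$ via convex duality. Assuming for contradiction that the optimum is strictly below $\kappa h$, Slater's condition (satisfied via $U = \epsilon P_{W^{\perp}}$ for tiny $\epsilon$) yields dual certificates: nonnegative multipliers $\lambda_j$ for the diagonal constraints, a symmetric matrix $M_W$ supported on $W$ for the orthogonality constraints, PSD matrices $Y_0,\ldots,Y_q$ for the spectral-independence constraints, and a PSD slack $Z$, satisfying the stationarity identity
\[
\diag(\lambda) + \sum_{s=0}^{q} E_s^{\top}\bigl(Y_s - (r_s/\eta_s)\diag(Y_s)\bigr)E_s \;=\; I + M_W + Z,
\]
with dual objective $\sum_j\lambda_j < \kappa h$, where $\diag(\lambda)$ is the diagonal matrix with entries $\lambda_j$ and $\diag(Y_s)$ is the diagonal matrix formed from the diagonal entries of $Y_s$.

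I will then take the trace inner product of the identity against $P_{W^{\perp}}$. The $M_W$ term vanishes since $M_W$ is supported on $W$ and thus annihilated by $P_{W^{\perp}}$; the slack $Z$ contributes nonnegatively since $Z,P_{W^{\perp}}\succeq 0$; and the identity part gives $(1-\delta)h$. The left-hand side becomes $\sum_j \lambda_j(P_{W^{\perp}})_{jj} + \sum_s \langle Y_s - (r_s/\eta_s)\diag(Y_s),\,E_s P_{W^{\perp}}E_s^{\top}\rangle$. Bounding the first piece by $\sum_j\lambda_j < \kappa h$ and showing the per-$s$ bound $\langle Y_s - (r_s/\eta_s)\diag(Y_s),\,E_s P_{W^{\perp}}E_s^{\top}\rangle \leq \eta_s h$, summation over $s$ combined with $\sum_{s=0}^{q}\eta_s \leq 1-\delta-\kappa$ produces the contradiction $(1-\delta)h \leq \kappa h + \sum_s \eta_s h \leq (1-\delta)h$, with strict inequality forcing the contradiction.

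The main obstacle will be establishing the per-$s$ bound $\langle Y_s - (r_s/\eta_s)\diag(Y_s),\,E_s P_{W^{\perp}}E_s^{\top}\rangle \leq \eta_s h$. This is the step where the rectangular structure of $E_s \in \R^{r_s h\times h}$ and the precise factor $r_s$ in constraint (v) become essential: the $r_s$ factor exactly compensates for the dimensional mismatch between $Y_s$'s $r_s h \times r_s h$ size and the ambient $h$-dimensional space, enabling a row-averaging argument over the $r_s h$ rows of $E_s$ (via a Cauchy--Schwarz or AM--GM step). The case $s = 0$, where $E_0 = I_h$ and $r_0 = 1$, reduces exactly to the scalar inequality carried out in \cite{BG17}; extending the analysis uniformly across all $s$, while using that the $Y_s$'s are coupled through the common stationarity identity rather than free to scale, is the principal technical content of the proof.
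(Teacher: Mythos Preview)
Your overall duality framework is correct and matches the paper's setup, but the crucial step---the per-$s$ bound
\[
\bigl\langle Y_s - \tfrac{r_s}{\eta_s}\diag(Y_s),\; E_s P_{W^{\perp}} E_s^{\top}\bigr\rangle \;\leq\; \eta_s h
\]
---does not hold for arbitrary PSD dual variables $Y_s$, and the coupling through the stationarity identity does not rescue it. Here is a concrete counterexample: take $E_s = \mathbf{1}_{r_s h}\, e^{\top}$ where $e\in W^{\perp}$ is a unit vector, so that $B_s := E_s P_{W^{\perp}}E_s^{\top}$ is the $r_s h \times r_s h$ all-ones matrix. With $Y_s$ also the all-ones matrix one computes
\[
\langle Y_s, B_s\rangle - \tfrac{r_s}{\eta_s}\langle \diag(Y_s),\diag(B_s)\rangle \;=\; (r_s h)^2 - \tfrac{r_s}{\eta_s}\cdot r_s h \;=\; r_s^2 h\bigl(h - \tfrac{1}{\eta_s}\bigr),
\]
which exceeds $\eta_s h$ whenever $h$ is larger than a constant. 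Moreover this $Y_s$ is easily completed to a feasible dual solution: the term $E_s^{\top}(Y_s - \tfrac{r_s}{\eta_s}\diag(Y_s))E_s$ is a large positive multiple of $ee^{\top}$, which can be absorbed entirely into the slack $Z$ (since $e\in W^{\perp}$), leaving $\lambda_j$ free. So the per-$s$ inequality is simply false on the dual feasible set. Testing the stationarity identity against the \emph{fixed} projector $P_{W^{\perp}}$ cannot work, because any excess in the $Y_s$ term is exactly matched by $\langle Z, P_{W^{\perp}}\rangle$ on the other side, and you have already discarded that term by using $\langle Z,P_{W^{\perp}}\rangle\ge 0$.

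You also mischaracterize the argument in \cite{BG17}: it is \emph{not} a scalar trace inequality but a subspace argument, and that is precisely what the paper does here too. For each $s$, one shows (Claim in the proof) that there is a subspace $W_s\subset\R^h$ of dimension at least $(1-\eta_s)h$ on which $v^{\top}E_s^{\top}G_sE_s v \le \tfrac{r_s}{\eta_s}\, v^{\top}E_s^{\top}\diag(G_s)E_s v$. The construction normalizes $G_s$ to $\widetilde G_s=\diag(G_s)^{-1/2}G_s\diag(G_s)^{-1/2}$, notes that $\Tr(\widetilde G_s)=r_sh$ so that at most $\eta_s h$ eigenvalues of $\widetilde G_s$ exceed $r_s/\eta_s$, and then pulls the resulting large eigenspace back through $E_s$. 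One then tests the dual inequality against an orthonormal basis of $W^{\perp}\cap W_H\cap\bigcap_s W_s$, which has dimension at least $\kappa h$; on this subspace every $s$-term is nonpositive by construction, forcing $\sum_j\alpha_j\ge\kappa h$. The key point you are missing is that the test subspace must be chosen \emph{adaptively to the dual variables} $G_s$, not fixed to $W^{\perp}$.
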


\begin{proof}
Consider the following SDP
\begin{equation}
\label{eq:sdp_primal} \tag{Primal SDP}
\begin{aligned}
    \max \quad & \Tr(X) \\
    s.t. \quad & X \cdot ww^\top = 0 \ , && \text{for all } w \in W ,\\
    & X \cdot e_i e_i^\top \leq 1 \ , &&\text{for all } i \in [h] , \\
    & X \preceq \frac{1}{\eta} \diag(X) , \\
    & E_s X E_s^\top \preceq \frac{r_s}{\eta_s} \diag(E_s X E_s^\top ) \ , \qquad && \text{for all } s \in [q] , \\
    & X \succeq 0 .
\end{aligned}
\end{equation}
It suffices to show that the optimal value of \eqref{eq:sdp_primal} is at least $\kappa h$, and we do so by considering the dual SDP. 
Let $\gamma_w$, $\alpha_i \geq 0$, $H \succeq 0$, and $G_s \succeq 0$ be the Lagrangian multipliers for the constraints of \eqref{eq:sdp_primal}. 
Note that $H \in \R^{h\times h}$ and $G_s \in \R^{r_sh \times r_sh}$.

Then the dual SDP is given by 
\begin{equation}
\label{eq:sdp_dual} \tag{Dual SDP}
\begin{aligned}
& \min \quad \sum_{i \in [h]} \alpha_i \\
& \textrm{s.t.} \sum_{w \in W} \gamma_w ww^\top + \sum_{i \in [h]} \alpha_i e_i e_i^\top + H - \frac{1}{\eta} \diag(H) + \sum_{s \in [q]} (E_s^\top G_s E_s - \frac{r_s}{\eta_s} E_s^\top \diag(G_s) E_s) \succeq I , \\
& \quad H \succeq 0 , \  G_s \succeq 0 \text{ for all $s \in [q]$} , \ \text{and } \alpha_i \geq 0 \text{ for all } i \in [h] .
\end{aligned}
\end{equation}

Note that strong duality holds here, as \eqref{eq:sdp_dual} is strictly feasible (e.g., \cite[Section 5.9]{BV04book}). For instance, the dual feasible solution $H = G_s = I$ for all $s \in [q]$, $\gamma_w = 0$ for all $w \in W$, and $\alpha_i = \sum_{s \in [q]} (2r_s/\eta_s) \Tr(E_s^\top E_s) + 2/\eta$ for all $i \in [h]$ is in the interior of the dual feasible region. 

Our goal is to prove that for an arbitrary feasible solution to \eqref{eq:sdp_dual}, its objective value is at least $\kappa h$.
By strong duality, this would imply that there must be a solution to \eqref{eq:sdp_primal} with objective value at least $\kappa h$.

To do so, we recall the following fact from \cite{BG17}.
\begin{fact}[\cite{BG17}] \label{fact:subspace_diag}
There exists a subspace $W_H \subset \R^h$ with dimension $\dim(W_H) \geq (1 - \eta) h$ such that for all vectors $v \in W_H$, one has  $v^\top H v \leq \frac{1}{\eta} v^\top \diag(H) v$ .
\end{fact}
We slightly generalize this fact in the following claim, whose proof is deferred to later.
\begin{claim} \label{claim:subspace_diag_general}
For any $s \in [q]$, there exists a subspace $W_s \subset \R^h$ with dimension $\dim(W_s) \geq (1 - \eta_s) h$ such that for all vectors $v \in W_s$, one has  $v^\top E_s^\top G_s E_s v \leq \frac{r_s}{\eta_s} v^\top E_s^\top \diag(G_s) E_s v$ . 
\end{claim}
Using \Cref{fact:subspace_diag} and \Cref{claim:subspace_diag_general}, we find subspaces $W_H$ of dimension at least $(1 - \eta)$ and $W_s$ for all $s \in [q]$ of dimensions at least $(1 - \eta_s)h$ each with the corresponding guarantees.
This means for any vector  $v \in W_{\mathsf{neg}} := W^\perp \cap W_H \cap \big(\bigcap_{s =1}^q W_s \big)$, we have
\begin{align} \label{eq:neg_subspace_SDP}
v^\top \Big( \sum_{w \in W} \gamma_w ww^\top + H - \frac{1}{\eta} \diag(H) + \sum_{s \in [q]} \big(E_s^\top G_s E_s - \frac{r_s}{\eta_s} E_s^\top \diag(G_s) E_s \big)  \Big) v \leq 0 .
\end{align}
Now consider an arbitrary feasible solution $H \succeq 0, G_s \succeq 0, \alpha_i \geq 0$ to \eqref{eq:sdp_dual}.
Note that $\dim(W_{\mathsf{neg}}) \geq \big (1 - \delta - \eta - \sum_{s \in [q]} \eta_s \big) h \geq \kappa h$, so there exists a set of orthonormal vectors $v_1, \cdots, v_{\kappa h} \in W_{\mathsf{neg}}$. Then we can bound 
\begin{align*}
\sum_{i \in [h]} \alpha_i  \geq \sum_{j \in [\kappa h]} \sum_{i \in [h]} \alpha_i e_i e_i^\top \bullet v_j v_j^\top \geq \sum_{j \in [\kappa h]} v_j v_j^\top \bullet I = \kappa h ,
\end{align*}
where the first inequality uses that $\sum_{j \in [\kappa_h]} v_j v_j^\top \preceq I$, the second inequality follows from the constraint of \eqref{eq:sdp_dual} and the inequality \eqref{eq:neg_subspace_SDP}. 
This shows that \eqref{eq:sdp_dual} has value at least $\kappa h$, which proves the theorem. 
\end{proof}

We are now left to prove  \Cref{claim:subspace_diag_general}. 

\begin{proof}[Proof of \Cref{claim:subspace_diag_general}]
We may assume without loss of generality that all diagonal entries of $G_s$ are strictly positive.\footnote{Otherwise, we first prove the statement for $G_s + \varepsilon I$ for all positive $\varepsilon$ and then take the limit $\varepsilon \rightarrow 0$.} Define $\widetilde{G}_s := \diag(G_s)^{- 1/2} G_s \diag(G_s)^{- 1/2}$. 
Then since $G_s$ has dimension $r_s h$, we have
\[
\Tr(\widetilde{G}_s) \leq r_s h .
\]
Let  $W_{s,G} \subset \R^{r_s h}$ be the span of all eigenvectors of $\widetilde{G}_s$ with eigenvalues at most $r_s/\eta_s$. 
Then the above bound implies that $\dim(W_{s,G}) \geq r_s h - \eta_s h$, as there can be  at most $\eta_s h$ eigenvalues exceeding $r_s/\eta_s$. 
Define $W'_{s,G} := \diag(G_s)^{-1/2} W_{s,G}$, which also has $\dim(W'_{s,G}) \geq r_s h - \eta_s h$. 
Then for any vector $u \in W'_{s,G}$, we have
\begin{align*}
u^\top G_s u & = (\diag(G_s)^{1/2} u)^\top \widetilde{G}_s (\diag(G_s)^{1/2} u )\\
& \leq \frac{r_s}{\eta_s} \cdot (\diag(G_s)^{1/2} u)^\top \diag(G_s)^{1/2} u = \frac{r_s}{\eta_s} u^\top \diag(G_s) u ,
\end{align*}
where the inequality above follows as $\diag(G_s)^{1/2} u \in W_{s,G}$ and all eigenvalues of $\widetilde{G}_s$ in the subspace  $W_{s,G}$ are at most $r_s/\eta_s$. 

We view $E_s: \mathbb{R}^h \rightarrow \mathbb{R}^{r_s h}$
as a linear map with kernel $\mathsf{Ker}(E_s)$. 
Note that any vector $v \in \mathsf{Ker}(E_s)$ clearly satisfies the statement of the claim, since both sides of the inequality are $0$. 
Any vector $v \in \mathbb{R}^h$ such that $E_s v \in W'_{s,G}$ also satisfies the claim statement. 
Therefore, the subspace 
\[
W_s := \mathsf{Ker}(E_s) \oplus E_s^{-1}(E_s(\R^h) \cap W_{s,G})
\]
satisfies $\dim(W_s) \geq (1 - \eta_s) h$ and the statement of the claim. 
\end{proof}

\subsubsection{The $O(r_s)$ Factor is Necessary for Affine Spectral-Independence}
\label{subsubsec:necessity_r_s}

Here we show that the extra factor of $O(r_s)$ on the right-hand side of constraints \eqref{sdp:pairwise-disc} is needed, as otherwise the SDP might not be feasible. 

Consider the following example. Let $E \in \mathbb{R}^{rh \times h}$ be the vertical stacking of $r$ identity matrices, i.e., $E = (I, I, \cdots, I)^\top$. We claim that the constraint 
\begin{align} \label{sdp:pairwise_counter_example}
E X E^\top \preceq \frac{1}{\eta} \diag(E X E^\top)  
\end{align}
has no PSD solution $X \succeq 0$ with any non-zero diagonal entry whenever $r > 1/\eta$. 
To see this, note that $EXE$ is the $r \times r$ block matrix with every $h \times h$ block being $X$, i.e.,
\[
EXE^\top = \left( 
\begin{matrix}
    X & X &\cdots & X \\
    \vdots & \vdots & \vdots & \vdots \\
     X & X &\cdots & X 
\end{matrix}
\right) .
\]
Suppose $X$ has a non-zero diagonal entry $X_{i,i} > 0$, then consider the test vector $u = (e_i^\top, e_i^\top, \cdots, e_i^\top)^\top$ which is the vertical stacking of $r$ of the $i$th standard orthonormal basis vector $e_i$, which satisfies
\begin{align*}
u^\top EXE^\top u = r^2 X_{i,i} \quad \text{and} \quad u^\top \Big(\frac{1}{\eta} \diag(E X E^\top)  \Big) u = \frac{r}{\eta} X_{i,i} .
\end{align*}
Then clearly constraint \eqref{sdp:pairwise_counter_example} is violated whenever $r > 1/\eta$. 

This example shows that an extra factor depending on the ratio of the row and column dimensions of $E_s$ is needed in constraints \eqref{sdp:pairwise-disc}. In particular, if $\eta_s$ is set to be a fixed constant, then a factor of $O(r_s)$ is indeed necessary for the feasibility of our new SDP.

\section*{Acknowledgements}
We thank Thomas Rothvoss and Lap Chi Lau for carefully reading the manuscript and several useful comments.

\bibliographystyle{alpha}
\bibliography{bib.bib}

\end{document}